\def\Lip{{\rm Lip}}
\def\dist{{\rm dist}}
\def\rank{{\rm rank\;}}
\def\setdiff{\setminus}
\def\Hcal{\mathcal{H}}
\def\Acal{\mathcal{A}}
\def\Ccal{\mathcal{C}}
\def\<{\langle}
\def\>{\rangle}
\title[ Open  manifold admitting no PSC metrics]{On open  manifolds admitting no complete metric with positive scalar curvature}
\author{\firstname{Yuguang} \lastname{Shi}}
\address{Key Laboratory of Pure and Applied Mathematics, School of Mathematical Sciences, Peking University, Beijing, 100871, P.\ R.\ China}
\email[Y. Shi]{ygshi@math.pku.edu.cn}
\thanks{Y. Shi is partially supported by National Key R$\&$D Program of China 2020YFA0712800}
\author{\firstname{Jian}\lastname{Wang}}
\address{Department of Mathematics, Stony Brook University, 100 Nicolls Road, Stony Brook, NY 11794, USA; Academy of Mathematics and Systems Science Chinese Academy of Sciences 55 Zhongguancun East Road, Beijing 100190 China}
\email[J. Wang]{jian.wang.4@stonybrook.edu}
\author{\firstname{Runzhang} \lastname{Wu}}
\email[R. Wu]{wrz0415@stu.pku.edu.cn}
\author{\firstname{Jintian}\lastname{Zhu}}
\address {Institute for Theoretical Sciences, Westlake University, 600 Dunyu Road, 310030, Hangzhou, Zhejiang, People's Republic of China}
\email[J. Zhu]{zhujintian@westlake.edu.cn}
\thanks{J. Zhu is partially supported by National Key R\&D Program of China 2023YFA1009900 as well as the startup fund from Westlake University.} 
\keywords{Positive scalar curvature, Open manifold, Topological obstruction}
\subjclass{00X99}
\begin{abstract} 
    In this paper, we investigate the topological obstruction problem for positive scalar curvature and uniformly positive scalar curvature on open manifolds. We present a definition for open Schoen-Yau-Schick manifolds and prove that there is no complete metric with positive scalar curvature on these manifolds. Similarly, we define weak Schoen-Yau-Shick manifolds by analogy, which are expected to admit no complete metrics with uniformly positive scalar curvature.

\end{abstract}
\begin{document}
\maketitle

\section{Introduction}

In Riemannian geometry, a fundamental question revolves around determining which types of open manifolds do not support complete metrics with positive scalar curvature (\emph{hereafter, PSC}) or uniformly positive scalar curvature (\emph{hereafter, UPSC}). Recent progress has focused on the extension of topological obstructions on closed manifolds to non-compact cases, a summary of which is provided below.

A common method for generating non-compact manifolds from closed ones involves performing connected sum operations. The initial exploration of obstructions to PSC on such manifolds stemmed from the Liouville theorem in conformal geometry. Lesourd, Unger, and Yau \cite{LUY2020} established a link between the Liouville theorem and the generalized Geroch conjecture, which states that $\mathbb{T}^n\#M$ cannot admit complete PSC metrics for any manifold $M$. Initially, this conjecture was confirmed in three dimension using the minimal surface method, later refined by Chodosh and Li \cite{2020Generalized} up to dimension seven through Gromov's $\mu$-bubble method. Notably, Wang and Zhang \cite{wz2022} resolved the generalized Geroch conjecture with an additional spin assumption for all dimensions. Analogous conjectures arise when replacing the $n$-torus $\mathbb{T}^n$ with certain generalizations such as the Schoen-Yau-Schick (SYS) manifolds \cite{2020Generalized, Chen2022AGO} or aspherical manifolds \cite{chen2023}.

In addition to connected sum operations, various other techniques exist for constructing non-compact manifolds from closed ones, including submanifold removal or product formations. For instance, Gromov and Lawson demonstrated in  \cite[Example 6.9]{Gr1983}  that removing a sub-torus from the $n$-torus $\mathbb{T}^n$ results in an obstruction of any complete PSC metric.
Conversely, Cecchini, Räde, and Zeidler \cite{Cecchini2023} proved, for $n\leq 7$ but excluding $n=4$, that $M^n\times \mathbb{R}$ does not have complete PSC metrics when $M$ itself is a closed manifold without any PSC metric. This result offers a positive resolution to the Rosenberg-Stolz conjecture.

Furthermore, it is anticipated that the topological obstructions for PSC remain invariant under domination maps, i.e., maps between two manifolds with non-zero degrees (in this paper, we only consider maps of degrees $\pm 1$). For instance, Gromov proposed the ``non-compact domination conjecture'' \cite[Section 4.7]{Gr2023}, suggesting that if a compact orientable manifold cannot be dominated by compact manifolds with $\rm{Sc} >0$, then it cannot be dominated by complete manifolds with $\rm{Sc} >0$. Examples include closed enlargeable manifolds and closed SYS manifolds, which cannot be dominated by a manifold with a complete PSC metric.

In a broader sense, Gromov also introduced the positive scalar curvature domination problems \cite[Section 1.5]{Gr2023}, posing questions about which spaces $M^n$ and classes $h \in H_m(M)$ \footnote{If not specified otherwise, all homology groups are with integral coefficients.} can or cannot be dominated by complete Riemannian manifolds with $\rm{Sc} >0$. If $M^n$ is an aspherical $5$-manifold, He and Zhu \cite{He2023} proved that any class $h$ in $H_4(M,\mathbb Q)$ can not be dominated by a compact manifold with $\rm{Sc} >0$. Conversely, submanifolds can also prevent the existence of PSC metrics on ambient manifolds. As shown in \cite{schoen1979existence,schoen1982complete, Gr1983, chen2021incompressible}, some incompressible hypersurfaces can be PSC obstructions.

To summarize, the aforementioned research efforts aim to understand how PSC obstructions can persist from closed manifolds to non-compact manifolds. In this paper, we aim to expand upon this compact-to-noncompact paradigm in SYS manifolds.

\subsection{Generalization of SYS} Closed SYS manifold was initially introduced by Schoen and Yau  \cite{SY1979} for the dimension-reduction argument and later applied by  Schick \cite{schick1998counterexample} to construct a counterexample to the unstable Gromov-Lawson-Rosenberg conjecture. 

Extending this line of inquiry to open manifolds aligns with the compact-to-noncompact philosophy. Gromov \cite[Section 5.10]{Gr2023} notably outlined a potential extension of Schoen-Yau's dimension-reduction argument to complete open manifolds, hinting at the possibility of defining open SYS manifolds. Nevertheless, the prospect faces uncertainties, particularly concerning the existence of area-minimizing hypersurfaces within each Borel-Moore homology class. Thus, we explore an alternative approach rooted in the concept of exhaustion.

Recall that a closed orientable manifold $M^n$ is said to be \emph{SYS} if there are cohomology classes $\beta_1,\ldots,\beta_{n-2} \in H^1(M)$ such that the homology class 
$$
\tau:=[M]\frown(\beta_1\smile \cdots  \smile\beta_{n-2})\in H_2(M)
$$
does not lie in the image of the Hurewicz map $\pi_2(M)\to H_2(M)$. Such a class $\tau$ is called aspherical. 

To introduce the notion of SYS for open manifolds,   a treatment for ends at infinity is essential.
Let $M$ be a manifold equipped with an exhaustion
\begin{equation*}
	K_1 \subseteq K_2 \subseteq \cdots \subseteq K_i \subseteq \cdots 
\end{equation*}
 by compact sets of $M$. Then 
 an {\it end} of $M$ means a decreasing sequence
\begin{equation*}
	U_1 \supseteq U_2 \supseteq \cdots \supseteq U_i \supseteq \cdots 
\end{equation*} 
where $U_i$ is a connected component of $M \setminus K_i$. 

Define 
$
\mathcal B(M)$ as the collection of subsets $V$ of $M$ with compact boundaries. For  any subset $V\in \mathcal B(M)$ we denote $\partial_\infty V$ to be the collection of ends, denoted by $\{U_i\}_{i=1}^\infty$, satisfying $U_i\subset V$ for $i$ sufficiently large. We remark that $V$ can be compact and in this case, $\partial_\infty V$ is empty.

Notice that if $M$ has a decomposition $M=A\cup B$ with $A,B\in \mathcal B(M)$ and $A \cap B$ is compact, then we have the corresponding ends decomposition 
$$\partial_\infty M=\partial_\infty A\sqcup \partial_\infty B.$$
 
For any subset $V\in \mathcal B(M)$ we can  define the chain complex
$$
C_*(M,\partial_\infty V):={\varprojlim}_i C_*(M,V\setminus K_i)
$$
and denote $H_*(M,\partial_\infty V)$ to be the corresponding homology groups. Moreover, 
 the cohomology groups $H^*(M,\partial_{\infty}V)$ can be defined by the same approach, via the direct limit, instead of the inverse limit. 

The non-compact version of the Poincar\'e duality Theorem \ref{pd} states that if $M$ has an ends-decomposition $\partial_\infty M=\partial_\infty A\sqcup \partial_\infty B$ with $A, B\in \mathcal B(M)$, then
the cap product with the fundamental class $[M]$ in $H_n(M,\partial_\infty M)$ gives the isomorphism of Poincaré dual
\begin{equation*}
	D_M : H^k(M,\partial_\infty A) \to H_{n-k}(M,\partial_\infty B), \quad \alpha \mapsto [M] \frown \alpha.
\end{equation*}

 Throughout this paper, $V$ is always denoted to be a set in $\mathcal B(M)$ and use these notions
$$\partial_s M:=\partial_\infty V \mbox{ and }\partial_b M:= \partial_\infty V^c.$$
where $V^c$ is denoted to be the complement of $V$ in $M$. This gives rise to an ends-decomposition $\partial_{\infty} M = \partial_s M \sqcup \partial_b M $ induced by $V$. Notice that if $\overline{V_1 \Delta V_2}$ is compact, then they induce the same ends-decomposition, and this condition is equivalent to $\partial_{\infty} V_1= \partial_{\infty} V_2$.

\vspace{2mm}

With the above preparation, we introduce the notion of aspherical classes.

\begin{definition}\label{spherical}
Given an ends-decomposition $\partial_{\infty}M=\partial_s M \sqcup \partial_b M$ induced by $V\in \mathcal{B}(M)$, a homology class $\tau \in H_2(M,\partial_s M )$ is called {\it spherical} if for any closed subset $\Omega$ of $M$ with $\overline{\Omega \Delta V^c}$ is compact, the restricted class $\tau|_\Omega\in H_2(M,M\setminus \Omega)$ is a class in the image of the Hurewicz map $\pi_2(M,M\setminus \Omega)\to H_{2}(M,M\setminus \Omega)$. Otherwise, we say that $\tau$ is {\it aspherical}.
\end{definition}

\begin{figure}[tbp]
    \includegraphics[width=1.0\linewidth]{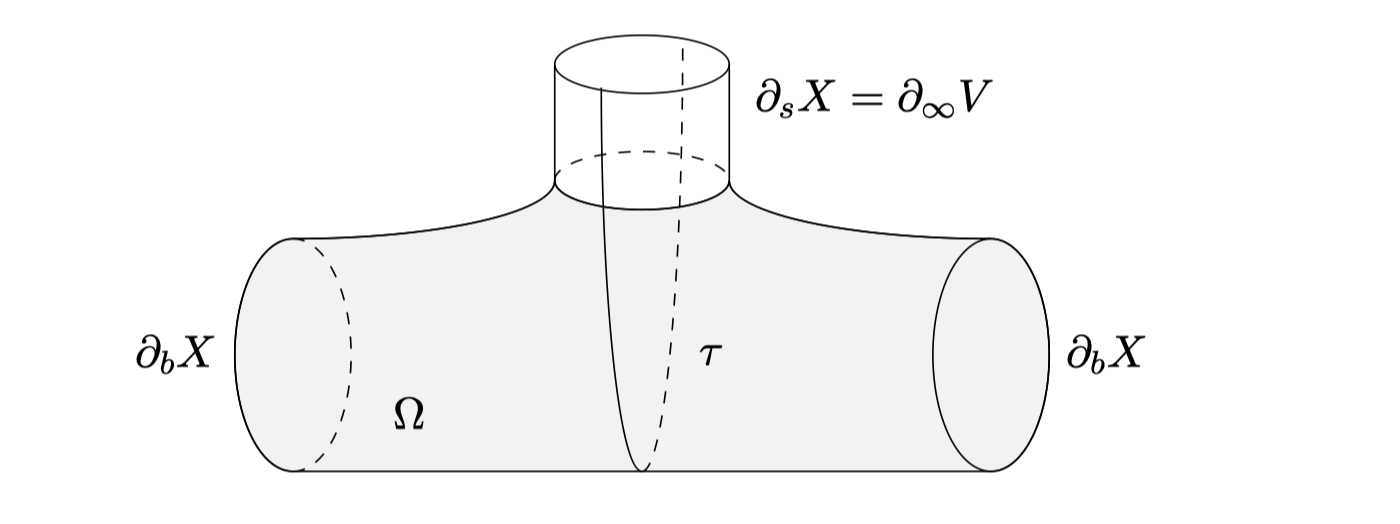}
    \caption{Definition \ref{spherical}}
\end{figure}

It is worth explaining here how the restricted class $\tau |_{\Omega}$ is defined. Intuitively, $\partial_s M = \partial_{\infty} V$ can be regarded as a subset of $M\setminus \Omega$ at infinity, it makes sense that the inclusion $(M, \partial_s M) \hookrightarrow (M, M\setminus \Omega)$ induces the restriction maps
\begin{equation*}
	H_*(M,\partial_s M) \to H_*(M, M \setminus \Omega) .
\end{equation*}

Strictly speaking, there are natural maps from $ \varprojlim H_*(M, V \setminus K_i)$ to $H_*(M, V \setminus K_i)$ by the universal property of inverse limit. Moreover, by the Milnor exact sequence in Lemma \ref{MilnorSeq}, we get natural maps from $H_*(M, \partial_s M)$ to $ \varprojlim H_*(M, M \setminus K_i)$. Finally, we can always choose a $K_i$ such that $V \setminus K_i \subseteq M \setminus \Omega$ to get the natural restrictions $H_*(M, V \setminus K_i) \to H_*(M, M \setminus \Omega)$.

\begin{remark}The class $\tau$ at infinity is supported in $\partial_s M$, instead of $\partial_b M$. Since $\partial_s M$ serves like a side-face in the picture, we use $\partial_s M$ to mean the side-ends of $M$ and $\partial_b M$ to mean the bottom-ends of $M$.  
\end{remark}

When considering finer structures of ends, it is necessary and crucial to use the relative homology $H_2(M,\partial_s M)$ instead of $H_2(M, \partial_{\infty} M)$, because some surfaces are non-trivial in $H_{2}(M, \partial_s M)$, but vanish in $H_2(M, \partial_\infty M)$.

\vspace{2mm}

Now we define the generalized SYS manifolds under our consideration as follows.

\begin{definition}\label{SYSopen}
	An orientable manifold $M^n$ (possibly open) is said to be  \emph{SYS} if there is a $V\in \mathcal{B}(M)$ and a decomposition $\partial_{\infty}M=\partial_s M \sqcup \partial_b M$  such that  there is a cohomology class $\beta_1$ in $H^1(M,\partial_b M)$ and some cohomology classes $\beta_2, \ldots,  \beta_{n-2}$ in $H^1(M)$ such that the class
	\begin{equation*}
		[M] \frown ( \beta_1 \smile\cdots \smile \beta_{n-2}) \in H_2(M,\partial_s  M)
	\end{equation*}
	is aspherical.
\end{definition}

\begin{remark}
If $M$ is closed, this definition coincides with the original one, and it is well-defined without any confusion.
\end{remark}

Our main result is stated as follows.

\begin{theorem}\label{main}
For  $3\leq n\leq 7$, there is no complete metric with positive scalar curvature on an open SYS $n$-manifold.
\end{theorem}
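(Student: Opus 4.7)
The plan is to argue by contradiction, adapting the Schoen--Yau--Schick dimension descent to the open setting. Assume $M^n$ is an open SYS manifold equipped with a complete PSC metric $g$. Through $n-2$ iterated hypersurface reductions we aim to produce a compact stable PSC surface $\Sigma\subset M$ whose fundamental class, under restriction to a suitable closed subset $\Omega\in\mathcal B(M)$ with $\overline{\Omega\Delta V^c}$ compact, equals $\tau|_\Omega\in H_2(M,M\setminus\Omega)$. The aspherical hypothesis on $\tau$ will then be incompatible with Gauss--Bonnet applied to $\Sigma$, yielding the contradiction.

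The first reduction uses the relative class $\beta_1\in H^1(M,\partial_b M)$. The plan is to represent $\beta_1$ by a smooth map $\varphi_1:M\to S^1$ that is locally constant near every bottom end. Choosing two parallel regular values of $\varphi_1$ pushed deep into the bottom ends carves out a band-shaped region in $M$ bounded by two hypersurface barriers. Inside this band I would apply Gromov's $\mu$-bubble construction with a weight function diverging to $\pm\infty$ on the two barriers; since $\varphi_1$ is locally trivial near $\partial_b M$ and PSC supplies the necessary mean-curvature estimates, the minimizer exists as a smooth compact stable closed hypersurface $\Sigma_1$ strictly in the interior of the band. For a suitable $\Omega_1$ containing $\Sigma_1$ with $\overline{\Omega_1\Delta V^c}$ compact, the image of $[M]\frown\beta_1$ under the restriction $H_{n-1}(M,\partial_s M)\to H_{n-1}(M,M\setminus\Omega_1)$ is represented by $[\Sigma_1]$. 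A standard conformal rescaling endows $\Sigma_1$ with its own PSC metric.

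For the iterative steps, the absolute classes $\beta_2,\ldots,\beta_{n-2}\in H^1(M)$ restrict to classes in $H^1(\Sigma_1)$, and naturality of the cap product yields
\begin{equation*}
[\Sigma_1]\frown\bigl(\beta_2|_{\Sigma_1}\smile\cdots\smile\beta_{n-2}|_{\Sigma_1}\bigr)=\tau|_{\Omega_1}.
\end{equation*}
Since $\Sigma_1$ is a compact closed PSC manifold, the classical Schoen--Yau descent applies verbatim: at each further stage a compact stable minimal hypersurface is obtained, smooth by Federer's regularity theorem in the range $3\leq n\leq 7$. Iterating produces a nested family $\Sigma_1\supset\Sigma_2\supset\cdots\supset\Sigma_{n-2}$ of compact stable PSC submanifolds, and cap-product naturality propagates the restricted $\tau$-class down to $[\Sigma_{n-2}]$.

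The surface $\Sigma_{n-2}$ is a compact PSC $2$-manifold, so Gauss--Bonnet forces every component to be an $S^2$; consequently $[\Sigma_{n-2}]$ lies in the image of the Hurewicz map $\pi_2(M,M\setminus\Omega_{n-2})\to H_2(M,M\setminus\Omega_{n-2})$ for any closed $\Omega_{n-2}$ containing $\Sigma_{n-2}$ with $\overline{\Omega_{n-2}\Delta V^c}$ compact. By the chain of cap-product identities, $\tau|_{\Omega_{n-2}}=[\Sigma_{n-2}]$, contradicting the asphericity of $\tau$. The principal obstacle is the first step: ensuring that the $\mu$-bubble constructed from the merely PSC (not UPSC) assumption is \emph{compact} in $M$ and faithfully represents the relative Poincaré dual of $\beta_1\in H^1(M,\partial_b M)$. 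This is precisely where the ends-decomposition $\partial_\infty M=\partial_s M\sqcup\partial_b M$ pays off, since placing $\beta_1$ in $H^1(M,\partial_b M)$ forces the bottom ends to supply the barrier geometry required to confine the $\mu$-bubble.
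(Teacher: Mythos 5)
Your first-step reduction has a gap that you yourself flag but do not resolve: the $\mu$-bubble you construct in the band need not be compact. Placing $\beta_1$ in $H^1(M,\partial_b M)$ and pushing barriers into the bottom ends does confine the minimizer in the \emph{bottom} direction, but the band between your two barriers remains non-compact in the \emph{side} direction: side ends belonging to $\partial_s M$ pass straight through that band, a minimizing sequence for the $\mu$-bubble functional can slide off to infinity along them, and mere PSC (as opposed to UPSC) gives no bound to prevent this. Your subsequent "classical Schoen--Yau descent applies verbatim" then has nothing to descend from. You also have no route when $\partial_b M=\emptyset$ (so $\beta_1\in H^1(M)$), since then there are no bottom ends to supply any barrier at all.

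The paper avoids both problems by never attempting to produce a closed hypersurface. It works inside a compact exhaustion set $\Omega_T=\rho^{-1}([0,T])$ and builds a stable weighted slicing with \emph{free boundary} on $\partial\Omega_T$ at every level --- a free-boundary $\mu$-bubble at the first level when $\partial_b M\neq\emptyset$, and a free-boundary minimizing slicing when $\partial_b M=\emptyset$. The terminal $\Sigma_2$ therefore has boundary on $\partial\Omega_T$, so Gauss--Bonnet alone does not identify its topology. The ingredient you are missing is the quantitative localized-topology statement (Proposition~\ref{Prop: quantitive topology}), proved from the width estimate for $T^*$-stabilized scalar curvature (Lemma~\ref{TStableWidthEstimate}): choosing $T$ large relative to the inverse of the local scalar curvature lower bound forces the portion of $\Sigma_2$ near the fixed compact set $\Omega$ to consist of disks. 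Only that localized conclusion shows $\tau|_{\Omega}$ is spherical and produces the contradiction; a global Gauss--Bonnet argument is not available.
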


Let us provide several concrete examples for open SYS manifolds, where the verification will be postponed to Section \ref{Sec: examples}. For convenience, we assume that all manifolds are orientable and open unless otherwise stated.

\begin{example}\label{Expl1}
	If $M$ is an open SYS manifold, then $M \# N$ is an open SYS manifold for arbitrary manifold $N$.
	
	 More generally, if $M$ is a domination of an open SYS manifold, then $M$ is open  SYS, where a domination of $\underline{M}$ on $M$ means that there exists a quasi-proper map $f : M \to \underline M$ (see Definition \ref{quaiproper}) with $\deg f =\pm 1$. 
	
\end{example}

\begin{example}\label{Expl2}
	If $M$ is a closed SYS manifold, then $M \times \mathbb R$ is an open SYS manifold.\end{example}

\begin{example}\label{Expl3}
	Let $M^n$ be a closed SYS manifold and $\Gamma$ be a submanifold. If $\Gamma$ satisfies one of the following:
	\begin{itemize}
		\item[(1)] $\dim(\Gamma) \le 1$;
		\item[(2)] The first betti number $b_1(\Gamma) \le n-3$,
	\end{itemize}
	then $M \setminus \Gamma$ is an open SYS manifold.
\end{example}

We use Theorem \ref{main} to partially answer a conjecture of Schoen mentioned in \cite{LM19}. 

\begin{corollary}
Let $M^n$, $3\leq n\leq 7$ be a closed manifold and $\Gamma$ a finite simplicial complex embedded in $M$ of codimension $k \ge \frac{n}{2} +1$. If $M\setminus \Gamma$ is an open SYS manifold and $g$ is a metric in  $ L^\infty(M)\cap C^\infty(M\setminus \Gamma)$ with nonnegative scalar curvature on $M\setminus \Gamma $, then $g$ is Ricci-flat on $M\setminus \Gamma$.
\end{corollary}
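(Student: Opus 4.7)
I argue by contradiction and assume $g$ is not Ricci-flat on $M\setminus \Gamma$. The plan is to construct a smooth complete Riemannian metric on $M\setminus \Gamma$ with strictly positive scalar curvature, which contradicts Theorem \ref{main} since $M\setminus \Gamma$ is open SYS and $3\le n\le 7$. The construction has two stages: first, replace $g$ by a smooth metric on $M\setminus \Gamma$ that still has $R\ge 0$ but is strictly positive somewhere; second, perform a conformal blow-up near $\Gamma$ that simultaneously yields completeness at $\Gamma$ and strict positivity of the scalar curvature everywhere.

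For the first stage, if $R_g\not\equiv 0$ on $M\setminus \Gamma$ there is nothing to do. If $R_g\equiv 0$ but $\Ric_g(p)\neq 0$ at some $p\in M\setminus \Gamma$, I invoke a Bourguignon--Kazdan--Warner-type perturbation: a compactly supported symmetric $2$-tensor $h$ on $M\setminus \Gamma$ modelled on $-\eta\,\Ric_g$ near $p$ (with $\eta$ a bump centred at $p$) makes the linearisation of $R$ at $g$ along $h$ nontrivially signed near $p$ while leaving $g$ unchanged outside a compact set. After a small conformal correction to keep $R\ge 0$ on the support of $h$, the perturbed smooth metric, still denoted $g$, satisfies $R_g\ge 0$ everywhere on $M\setminus \Gamma$ and $R_g>0$ on a nonempty open set.

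For the conformal blow-up stage, recall that under $\tilde g=u^{4/(n-2)}g$ the scalar curvature transforms as $R_{\tilde g}=u^{-(n+2)/(n-2)} L_g u$, where $L_g=-\tfrac{4(n-1)}{n-2}\Delta_g+R_g$, while the $\tilde g$-length of a curve behaves like $\int u^{2/(n-2)}\,ds$. To obtain $R_{\tilde g}>0$ and completeness at $\Gamma$ at once, I prescribe the blow-up profile $u\sim d_g(\cdot,\Gamma)^{-\alpha}$. A computation modelled on the Green's function for a codimension-$k$ submanifold shows that $u$ is a supersolution of $L_g$ near $\Gamma$ when $0<\alpha\le k-2$, while the path integral $\int u^{2/(n-2)}\,ds$ along curves terminating at $\Gamma$ diverges precisely when $\alpha\ge (n-2)/2$. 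These two conditions are compatible exactly when $k\ge \tfrac{n}{2}+1$, which is our hypothesis. The $L^\infty$-boundedness of $g$ makes $d_g(\cdot,\Gamma)$ comparable with the distance in a smooth background metric, so the asymptotics are unambiguous despite the simplicial nature of $\Gamma$. A Perron construction on a nested exhaustion of $M\setminus \Gamma$ by compacts, with the above profile prescribed as boundary blow-up, produces a positive solution $u$ of $L_g u=\varepsilon u^{(n+2)/(n-2)}$ for some small $\varepsilon>0$; the metric $\tilde g=u^{4/(n-2)}g$ is then a smooth complete metric on $M\setminus \Gamma$ with $R_{\tilde g}=\varepsilon>0$.

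Applying Theorem \ref{main} to $(M\setminus \Gamma,\tilde g)$ yields the desired contradiction, so $g$ must be Ricci-flat on $M\setminus \Gamma$. The main obstacle is the conformal blow-up: the threshold $k\ge \tfrac{n}{2}+1$ is sharp because it is exactly the codimension regime in which the profile $d^{-(n-2)/2}$ is simultaneously a supersolution of the conformal Laplacian (so that $R_{\tilde g}$ stays positive) and diverges along paths fast enough to produce completeness. Making the Perron construction rigorous under the merely $L^\infty$ regularity of $g$ and the simplicial rather than smooth structure of $\Gamma$ is the most delicate analytic point, and it is here that the full force of the codimension bound, via comparison with a fixed smooth background metric, is used.
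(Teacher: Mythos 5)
Your proposal follows exactly the route the paper sketches: perturb away from the trivial kernel of the linearised scalar curvature if $R_g\equiv 0$ but $\Ric_g\not\equiv 0$, then conformally blow up near $\Gamma$ with a profile $d^{-\alpha}$ chosen so that the conformal Laplacian stays positive while the conformal metric becomes complete, and finally apply Theorem \ref{main}. The paper itself offers no written proof of this corollary — it only points to the completeness computation in the cited Proposition 2.4 of Wang — so you have actually supplied more detail than the source does, and your identification of the two competing thresholds ($\alpha\ge(n-2)/2$ for completeness, $\alpha\le k-2$ for the supersolution property) and of the exact codimension balance $k\ge n/2+1$ is correct and is precisely the content of the computation the paper delegates to that reference.

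Two remarks on the technical details. First, at the endpoint $k=n/2+1$ (which the corollary allows, e.g.\ $n=4,\,k=3$ or $n=6,\,k=4$), the choice $\alpha=(n-2)/2$ hits $\alpha=k-2$ exactly, so the leading-order coefficient $\alpha(\alpha-k+2)$ in $\Delta_g(d^{-\alpha})$ vanishes and the supersolution inequality has to come from lower-order terms; your phrase ``$0<\alpha\le k-2$'' glosses over this marginal case, which is exactly where the delicacy you acknowledge in the simplicial/$L^\infty$ setting lives. Second, solving the full singular Yamabe equation $L_g u=\varepsilon u^{(n+2)/(n-2)}$ is heavier than necessary: Theorem \ref{main} only needs $R_{\tilde g}>0$ pointwise (not uniformly), so a positive strict supersolution $L_g u>0$ with the prescribed blow-up suffices, and avoids the nonlinear Perron machinery. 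Neither remark changes the fact that your overall strategy is the intended one.
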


The original conjecture of Schoen \cite{LM19} supposes that $\Gamma$ is co-dimensional $3$ and the Yamabe invariant of $M$ is non-positive, instead of the open SYS property of $M$. The lower bound of codimension $k$ ensures that the conformal metric will be complete (see the computation in \cite[Proposition 2.4]{wang2024scalarcurvaturerigidityspheres}). Mantoulidis-Li \cite{LM19} and Kazaras \cite{kazaras2019} resolved the conjecture for 3-dimension and 4-dimension respectively.

\subsection{Uniformly Positive Scalar Curvature}
PSC and UPSC obstructions exhibit differing behaviors in non-compact manifolds. For example,  $\mathbb R^2$ admits a complete PSC metric, but there is no complete UPSC metric. Hence, Definitions \ref{spherical} and \ref{SYSopen} fall short in determining which types of open manifolds admit a complete UPSC metric.

We modify the previous definition slightly and introduce the so-called ``strongly spherical '' class.
\begin{definition}\label{Defwclass}

	Given an ends-decomposition $\partial_{\infty}M=\partial_s M \sqcup \partial_b M$ induced by $V\in \mathcal{B}(M)$, a homology class $\tau \in H_2(M, \partial_s M)$ is called {\it strongly spherical} if for any closed subset $\Omega$ of $M$ satisfying that $\overline{\Omega \Delta V^c}$ is compact , the restricted class $\tau|_\Omega\in H_2(M,M\setminus \Omega)$ is a class in the image of the Hurewicz map $\pi_2(M)\to H_{2}(M,M\setminus \Omega)$. Otherwise, we say that $\tau$ is {\it weakly aspherical}.

\end{definition}

Likewise, we define weak SYS manifolds. 

\begin{definition}\label{DefwSYS}
	An orientable manifold $M^n$ (possibly open) is said to be \emph{weakly SYS} if there is an ends-decomposition $\partial_{\infty}M=\partial_s M \sqcup \partial_b M$ induced by $V$ satisfying the following property: there are a cohomology class $\beta_1$ in $H^1(M,\partial_b M)$ and cohomology classes $\beta_2, \ldots,  \beta_{n-2}$ in $H^1(M)$ such that the class
	\begin{equation*}
		[M] \frown ( \beta_1 \smile\cdots \smile \beta_{n-2}) \in H_2(M,\partial_s  M)
	\end{equation*}
	is weakly aspherical.
\end{definition}

\begin{example}
	$\mathbb T^2 \times \mathbb R^2$ is weakly SYS but not SYS as it admits a complete PSC metric. 
\end{example}

Similarly to Theorem \ref{main}, we can prove the following: 

\begin{theorem}\label{ThmwSYS}
	For $3 \le n \le 7$, there are no complete metrics with uniformly positive scalar curvature on a weakly SYS $n$-manifold. 
\end{theorem}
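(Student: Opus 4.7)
The plan is to mirror the iterated-slicing argument that underlies Theorem \ref{main}, but to replace each slicing step by its \emph{quantitative} counterpart so that the slices stay in a bounded region of $M$, and then use UPSC to upgrade the final $2$-cycle from a \emph{relative} $2$-sphere (which would only contradict asphericity) to an \emph{absolute} $2$-sphere (which suffices to contradict weak asphericity). So, suppose for contradiction that $(M^n, g)$ is weakly SYS with $\mathrm{Sc}(g) \ge \sigma > 0$, equipped with $V \in \mathcal{B}(M)$, $\beta_1 \in H^1(M, \partial_b M)$ and $\beta_2,\dots,\beta_{n-2} \in H^1(M)$ such that $\tau = [M] \frown (\beta_1 \smile \cdots \smile \beta_{n-2})$ is weakly aspherical.

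First I would exhaust $M$ by compact sets $K_i$ and use $\beta_1$ to pick, in each sufficiently large $K_i$, a hypersurface Poincar\'e dual to $\beta_1$ separating $\partial_b M$ from $\partial_s M$; then, following Gromov's warped $\mu$-bubble construction, I would solve the prescribed-mean-curvature problem in a band around such a hypersurface with warping function $h$ chosen so that on the minimizer one has $\mathrm{Sc} - 2|\nabla \log h| - 2\Delta \log h$ uniformly positive. Because $\mathrm{Sc}(g)\ge \sigma>0$, a standard band-width estimate forces the minimizer $\Sigma_1$ to lie within bounded $g$-distance of its boundary; in particular $\Sigma_1$ is compact, represents the Poincar\'e dual of $\beta_1$, and carries an induced conformal metric $\tilde g_1$ with $\mathrm{Sc}(\tilde g_1) \ge \sigma_1 > 0$. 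Next, inductively on $j = 2,\dots,n-2$, the restriction $\beta_j|_{\Sigma_{j-1}}$ is Poincar\'e dual to a closed hypersurface inside the closed manifold $\Sigma_{j-1}$, and the Schoen-Yau stable minimal hypersurface argument (valid through $n \le 7$) yields $\Sigma_j \subset \Sigma_{j-1}$ with a conformally modified metric of uniformly positive scalar curvature.

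After $n-2$ iterations the output is a closed orientable surface $\Sigma := \Sigma_{n-2}$, compactly contained in $M$, whose fundamental class maps to $\tau|_{\Omega}$ in $H_2(M, M\setminus\Omega)$ for any closed $\Omega \supset \Sigma$ with $\overline{\Omega \Delta V^c}$ compact (the restriction formalism from the excerpt applies here). Since $\Sigma$ carries a PSC metric, Gauss-Bonnet forces each component to be a $2$-sphere. Crucially, $\Sigma$ is an \emph{absolute} embedded $2$-sphere inside $M$, not merely a relative $2$-cycle, so the inclusion $S^2 \hookrightarrow M$ defines an element of $\pi_2(M)$ whose image in $H_2(M, M\setminus\Omega)$ equals $\tau|_{\Omega}$. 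This contradicts the hypothesis that $\tau$ is weakly aspherical.

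The step I expect to be the main obstacle is the first one: building a \emph{compact} warped $\mu$-bubble Poincar\'e dual to the relative class $\beta_1 \in H^1(M, \partial_b M)$ on a general open manifold. One needs an exhaustion-compatible choice of warping function $h$ such that (i) the variational problem admits a minimizer in each large compact piece, (ii) the minimizers stabilize (as $i\to\infty$) to a single compact hypersurface representing $\beta_1$, and (iii) UPSC genuinely descends to $\Sigma_1$. The band-width estimate is what converts complete UPSC into \emph{bounded geometry} of the slicing, and without it the iterated construction would only produce non-compact relative $2$-cycles, as in the PSC case of Theorem \ref{main}; this is precisely why PSC needs the stronger SYS hypothesis while UPSC suffices against the weaker weakly SYS hypothesis.
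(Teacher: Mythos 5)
You have the right high-level strategy and the correct insight about why UPSC (rather than mere PSC) suffices against the weaker weakly-SYS hypothesis: uniform positivity gives a quantitative diameter bound on the final surface, so its relevant components are genuinely closed $2$-spheres that produce elements of $\pi_2(M)$, not just of $\pi_2(M, M\setminus\Omega)$. That matches the paper's endgame, which invokes the $T^*$-stable 2d Bonnet--Myers diameter inequality to conclude that each component of $\Sigma_2$ meeting a fixed compact core $K$ is a sphere of diameter at most $2\pi/\sqrt{\sigma}$.

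However, there is a genuine gap in your first step, and you flag it yourself without resolving it. You claim that the band-width estimate forces the warped $\mu$-bubble $\Sigma_1$, Poincar\'e dual to $\beta_1 \in H^1(M,\partial_b M)$, to be compact. The band-width estimate only controls the \emph{transverse} extent of the $\mu$-bubble -- its distance across the band -- and says nothing about its extent in directions tangential to the reference hypersurface. When $\partial_s M \neq \emptyset$, any hypersurface dual to $\beta_1$ has ends escaping to $\partial_s M$, and the $\mu$-bubble inherits this noncompactness; the ``exhaustion-compatible stabilization'' you posit does not come for free from UPSC. The paper sidesteps this entirely: it runs the same \emph{free-boundary} stable weighted slicing from Theorem~\ref{main} inside a compact region $K_T$ with $T > 2\pi/\sqrt{\sigma}$, so every $\Sigma_j$ is a compact manifold with boundary on $\partial K_T$. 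The final surface $\Sigma_2$ may have boundary, but the components are then split into two classes: those meeting $K$ have diameter $\le 2\pi/\sqrt{\sigma} < T$, hence avoid $\partial K_T$ and are closed spheres; those disjoint from $K$ are homologically trivial because $\beta_1|_{\Omega\setminus K}=0$ (one chooses $K$ so that $\beta_1$ vanishes outside it). Summing only over the first class shows $\tau|_\Omega$ lies in the image of $\pi_2(M)$. It is precisely this compact-domain, free-boundary construction together with the two-part component analysis -- rather than a globally compact slicing -- that replaces the step you correctly identified as the main obstacle.
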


We will give a class of weak SYS manifolds obtained by deleting a certain submanifold from closed SYS manifolds.

\begin{corollary}
	Let $M$ be a closed SYS manifold with the aspherical class $\tau= [M] \frown (\beta_1 \smile \ldots \smile  \beta_{n-2}) \in H_2(M)$ and $\Gamma \subset M$ be a embedded submanifold. If $\tau$ is not in the subgroup generated by the image of $H_2(\Gamma)$ and $\pi_2(M)$, then $M$ is weakly SYS.
\end{corollary}

\begin{remark}
	If the Hurewicz map $\pi_2(\Gamma) \to H_2(\Gamma)$ is surjective, then the aspherical class $\tau$ automatically satisfies the condition of the above theorem. 
\end{remark}

\subsection{Idea of the proofs of Theorem \ref{main} and Theorem \ref{ThmwSYS}}

 Similar to the compact case, we apply the Schoen-Yau descent argument to find a suitable surface representation of the aspherical class $\tau$ defined in Section 1.1. It will be seen later that both the proofs of Theorem \ref{main} and Theorem \ref{ThmwSYS} follow the same strategy: determining the topological characterization of this surface under the condition of PSC or UPSC. The proof of Theorem \ref{ThmwSYS} is much simpler than Theorem \ref{main} itself, due to the UPSC assumption. Thus, we will only focus on the proof of Theorem \ref{main} here.

\vspace{2mm}

As the non-compactness of manifolds leads to the lack of the existence of global stable minimal surfaces, we will find a ``minimal'' surface in a sufficiently large closed subset $\Omega$  (as chosen in Definition \ref{spherical}) with possible use of Gromov's $\mu$-bubble. Precisely, we construct a stable weighted slicing with free boundary

\[
(\Sigma_{2},\partial\Sigma_{2},w_{2})\to\ldots \to (\Sigma_{n-1},\partial\Sigma_{n-1},w_{n-1})  \to(\Sigma_n,\partial\Sigma_n,w_n)=(\Omega,\partial \Omega,1),
\]
 where $\Sigma_{k}$ is a $k$-dimensional submanifold in $\Omega$, $w_{k}$ is a smooth positive function  on $\Sigma_{k}$ which will be served as a weight later.
 
 \vspace{2mm}
   
 The main difficulty lies in obtaining a topological description of $(\Sigma_2, \partial\Sigma_2)$ under the positive scalar curvature assumption. Specifically, we can give a localized topological characterization (see Proposition \ref{Prop: quantitive topology}) asserting  that 
 \begin{center}
 $(\Sigma_2, \partial \Sigma_2)$ must be a disk far away from $\partial \Sigma_2$.  \quad \quad \quad \emph{(ltc)}\end{center}
 
The reason is as follows: We first apply the warping product trick consecutively and get that $\Sigma_{2}$ (actually $\Sigma_{2} \times \mathbb T^{n-2}$) can be viewed as a surface with a PSC metric. 

Choose a compact set $K\subset \Sigma_2$  such that $\partial \Sigma_2$ is sufficiently far from $K$. If there is a non-trivial closed curve $\gamma \subset K$ in $H_1(\Sigma_2)$, we lift it to a certain covering space and assume that 
\begin{itemize}
\item the curve $\tilde \gamma$ separates two ends of $\partial \tilde \Sigma_2$;
\item it is sufficiently far from $\partial \tilde \Sigma_2$.
\end{itemize}
A width estimate (see Lemma \ref{TStableWidthEstimate}) suggests that  the distance between such a curve and $\partial \tilde \Sigma_2$ must be bounded by a definite constant, which is a contradiction with the second fact. 

We can conclude that each component $\partial K$ is contractible in $\Sigma_2$. Namely, $\Sigma$ is a disc far away from $\partial \Sigma_2$

\vspace{2mm}

Remark that the bound follows from the width estimate in Lemma \ref{WidthEstimate}, which is a quantitative improvement of the fact that a Riemannian surface with complete PSC metrics cannot have two ends. 

\vspace{2mm}

We use the localized topological characterization (see \emph{(ltc)}) and see  that the class $\tau$ is spherical, as $(\Sigma_2, \partial \Sigma_2)$ represents the class $\tau|_{\Omega} $ as in Definition \ref{spherical}. However, $\tau$ is aspherical from the SYS assumption.

 \vspace{2mm}

 We now explain the construction of the co-dimensional one hypersurfaces.

For the case $\partial_{b} M = \emptyset$, the closed subset $\Omega$ is compact. By the standard theorems of geometric measure theory, the minimal surface exists with singularities of codimension $7$.

For the case $\partial_{b} M \neq \emptyset$, $\Omega$ may be non-compact, so we try to find a $\mu$-bubble $\Sigma_{n-1}$ at the first step. An essential advantage of $\mu$-bubbles over minimal hypersurfaces is that the former are easier to ``trap'' and prevent from fully sliding away to infinity than the minimal hypersurfaces. By lifting to a covering space, we can assume that $\Omega$ is a non-compact manifold with boundary $\partial_{s}$ and a nontrivial ends decomposition $\partial_\infty \Omega=\partial_+\sqcup \partial_-$. The existence of $\mu$-bubble is guaranteed by the non-trivial ends decomposition. 

\begin{figure}[tbp]
    \includegraphics[width=1.0\linewidth]{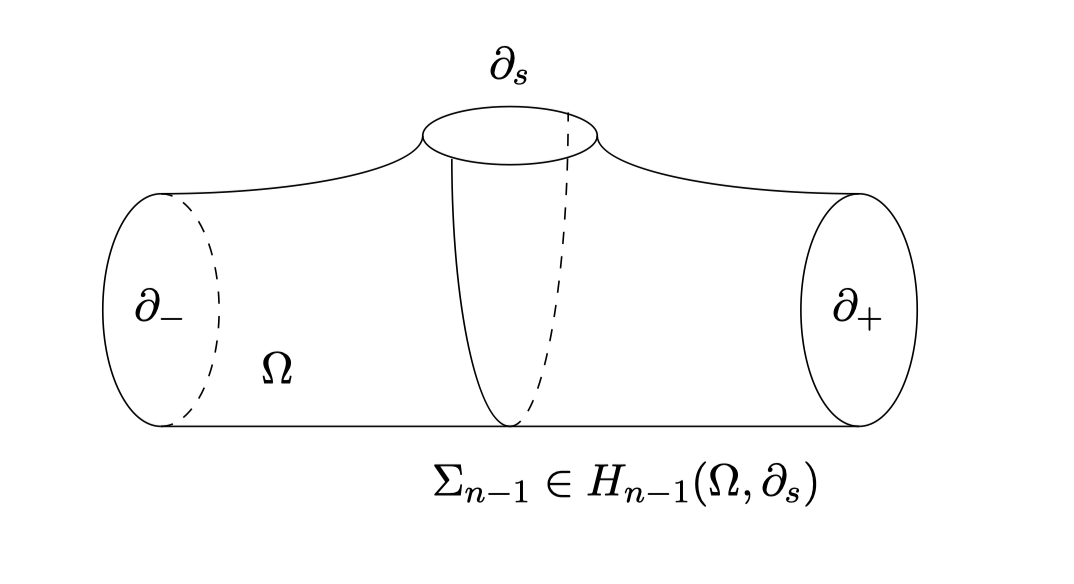}
    \caption{$\mu$-bubble with boundary}
\end{figure}

\vspace{2mm}

\subsection{Arrangement of  this paper} The rest of this paper is organized as follows:
\begin{itemize}
	\item In Section 2, we provide some preliminary content on topology and analysis. 
	\item In section 3, we prove Theorem \ref{main}.
	\item In section 4, we give some examples of open SYS manifolds, including those mentioned in the introduction.
	\item In Section 5, we prove Theorem \ref{ThmwSYS} and discuss the obstruction for UPSC.
\end{itemize}

\section*{Acknowledgement}
We would like to thank Prof. Chao Li for several helpful conversations about non-compact minimal surfaces. We are grateful to Prof. Xin Zhou and Dr. Zhichao Wang for the discussion on minimal surfaces with free boundary. Finally, during the workshop ``Recent Advances in Comparison Geometry (24w5226)'' held in Hangzhou, Prof. Pengzi Miao highlighted a potential link between Theorem \ref{main} and the Schoen conjecture, for which we are grateful. Additionally, we would like to thank the Banff International Research Station and the Institute of Advanced Studies in Mathematics for providing us with this enriching opportunity.

\section{Preliminary}
In this section, we introduce some notions and some useful lemmas. 

\subsection{Topological Preliminaries}

\begin{lemma}\label{pd1}
Let $M^n$ be a connected and oriented smooth manifold, possibly open,  $0\neq \alpha \in H_{n-1}(M^n)$ be a non-zero homology class, then $\alpha$ can be represented by closed embedded oriented hypersurface $\Sigma^{n-1}$ in $M^n$; Moreover, there is a smooth map $f$: $M^n \rightarrow\mathbb{S}^1$ and    $\Sigma^{n-1}$ is regular level set of $f$. 	
\end{lemma}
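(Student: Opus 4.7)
The plan is to use Poincaré duality to convert $\alpha\in H_{n-1}(M)$ into a compactly supported degree-one cohomology class, realize it as the pullback of a generator of $H^1(S^1;\mathbb{Z})$ along a smooth map that is constant outside a compact set, and then take a regular level set of that map.

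First, I would invoke Poincaré--Lefschetz duality for the oriented $n$-manifold $M$: capping with the locally finite fundamental class gives an isomorphism
\[
D_M\colon H^1_c(M;\mathbb{Z}) \xrightarrow{\ \cong\ } H_{n-1}(M;\mathbb{Z}),\qquad \beta\mapsto [M]\frown \beta,
\]
where $H^1_c$ denotes compactly supported cohomology. Let $\hat{\alpha}\in H^1_c(M;\mathbb{Z})$ be the unique class with $D_M(\hat{\alpha})=\alpha$. Using
\[
H^1_c(M;\mathbb{Z})=\varinjlim_{K}\,H^1(M,M\setminus K;\mathbb{Z})\cong \varinjlim_{K}\,[(M,M\setminus K),(S^1,*)],
\]
which holds because $S^1=K(\mathbb{Z},1)$, the class $\hat{\alpha}$ is represented by a continuous map $f_0\colon M\to S^1$ that equals the basepoint $*$ on $M\setminus K$ for some compact $K\subset M$, and satisfies $f_0^{*}[\omega]=\hat{\alpha}$ where $[\omega]$ is the canonical generator of $H^1(S^1;\mathbb{Z})$.

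Next, by relative smooth approximation (keeping $f_0$ fixed on a neighborhood of $M\setminus K$), I would replace $f_0$ by a homotopic smooth map $f\colon M\to S^1$ that is still constant off a compact set $K'\supset K$. By Sard's theorem almost every $p\in S^1$ is a regular value of $f$; choose such a $p\neq *$ and set $\Sigma:=f^{-1}(p)$. Since $f\equiv *\neq p$ outside $K'$, we have $\Sigma\subset K'$, so $\Sigma$ is compact; regularity of $p$ makes $\Sigma$ a smooth, embedded, closed, oriented $(n-1)$-submanifold, with the orientation induced by the ambient orientation of $M$ together with a chosen orientation of $S^1$ at $p$.

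Finally, to conclude $[\Sigma]=\alpha$, I would appeal to the standard identification of the Poincaré dual of a smooth compact oriented hypersurface with the pullback cohomology class $f^{*}[\omega]$; intuitively, one may choose a representative of $[\omega]$ supported near $p$, so that $f^{*}\omega$ becomes a Thom representative for a tubular neighborhood of $\Sigma$. Hence $[\Sigma]=[M]\frown f^{*}[\omega]=[M]\frown\hat{\alpha}=\alpha$. The main technical subtlety of the argument is the second step: we must arrange $f$ to be constant outside a compact set so that $\Sigma=f^{-1}(p)$ is compact. A generic smooth map $M\to S^1$ representing a class in $H^1(M)$ can easily have non-compact regular level sets (e.g.\ projection $\mathbb{R}^2\to S^1$), so it is essential that we pass through $H^1_c$ rather than $H^1$---this is precisely what the duality $H_{n-1}(M)\cong H^1_c(M)$ (as opposed to $H^{lf}_{n-1}(M)\cong H^1(M)$) provides.
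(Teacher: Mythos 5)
Your proof is correct and follows essentially the same route as the paper's: both pass from $\alpha\in H_{n-1}(M)$ to a compactly supported class in $H^1_c(M)$ via Poincar\'e duality, represent that class by a map to $(\mathbb{S}^1,*)$ that is constant outside a compact set (the paper cites relative Brown representability where you invoke $\mathbb{S}^1=K(\mathbb{Z},1)$ directly, which is the same fact), smooth it, and take a regular level set via Sard's theorem. Your write-up is if anything a bit more careful about why the level set is compact and why $[\Sigma]=\alpha$.
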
 
\begin{proof}
Let $\eta \in H^1_c(M)$ be the Poincaré dual of $\alpha\in H_1(M)$.  One has that 

$$H_c^1(M):=\varinjlim\limits_{K \subseteq M} H^1(M, M\setminus K),$$
We can find a compact set $K\subset M$ such that $\eta \in H^1(M, M\setminus K) $. Without loss of the generality, we may assume $\partial K$ is smooth.

The excision property of cohomology (see Page \cite{2002Hat}) gives some isomorphisms: $H^1(M,M\setminus K) \cong H^1(M / (M\setminus K))$ and $H^1(\mathbb S^1, \{-1\}) \cong H^1(\mathbb S^1)$. Brown's representability theorem (see Page 448 \cite{2002Hat}) suggests the relative version  as follows:
\begin{center}\begin{tabular}{rccc}
$\tilde{T} :$&$\left[(M , M\setminus K), (\mathbb{S}^1, \{-1\})\right]$ &$\longrightarrow  $&$H^1(M , (M\setminus K))$\\
&$[f]$&$\longrightarrow$ &$f^*(\theta)$\\
\end{tabular}
\end{center}
where $\left[(M ,M\setminus K), (\mathbb{S}^1, \{-1\})\right]$ is the set of homotopic classes of maps from $(M,M\setminus K)$ to $(\mathbb{S}^1, \{-1\})$, and $\theta$ is a generator of $H^1(\mathbb S^1,\{-1\})\cong H^1(\mathbb{S}^1)$.  The map $\tilde{T}$ is bijective.

 For any element $\eta\in H^1(M, M\setminus K)$, there is a smooth function $f: (M, M\setminus K)\rightarrow (\mathbb{S}^1, \{-1\})$ with $f^*(\theta)=\eta$. Sard's Theorem allows us to find a regular value $\alpha\neq -1$ in $ \mathbb{S}^1$ such that $f^{-1}(\alpha)$ is a smooth submanifold in $K$. Moreover, the submanifold $f^{-1}(\alpha)$ is the Poincar\'e duality of $\eta$. 

\end{proof}

\begin{lemma}\label{Covering}
	Let $M^n$ be an oriented smooth Riemannian manifold and $\alpha \in H_{n-1}(M^n)$ be a non-zero class. Then there exists a covering space $\tilde M$ of $M$ associated with a smooth function
	$$
		\rho : \tilde M \to \mathbb R
	$$ 
	satisfying
	\begin{itemize}
		\item[(1)] $ \tilde \Sigma = \rho^{-1}(0)$ is a smooth embedded hypersurface.
		\item[(2)] $p_*[\tilde \Sigma]=\alpha \in H_{n-1}(M)$ where $p : \tilde M \to M$ is the projection map.
		\item[(3)] $\Lip(\rho ) < 1$ and $\rho$ is surjective.
	\end{itemize}
\end{lemma}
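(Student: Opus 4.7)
The plan is to combine Lemma \ref{pd1} with a passage to an infinite cyclic cover of $M$ and a smoothing/rescaling of the signed distance to a model hypersurface there. The construction uses completeness of the metric on $M$ in the last step; completeness is implicit in the applications of this lemma to complete PSC manifolds.

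First, apply Lemma \ref{pd1} to obtain a smooth embedded oriented hypersurface $\Sigma \subset M$ representing $\alpha$, realized as a regular fiber $\Sigma = f^{-1}(\theta_0)$ of a smooth map $f:M\to\mathbb{S}^1$. Since $\Sigma$ is two-sided (being oriented inside an oriented $M$), one may cut $M$ along $\Sigma$ and glue a $\mathbb{Z}$-indexed chain of copies to produce a connected infinite cyclic cover $p: \tilde M \to M$, equipped with the pulled-back Riemannian metric, a generator $T$ of the deck group, and a smooth transverse height function $\tilde f:\tilde M \to \mathbb{R}$ satisfying $\tilde f(Tx) = \tilde f(x) + 1$. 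The hypersurface $\tilde \Sigma := \tilde f^{-1}(0)$ is a single lift of $\Sigma$, $p|_{\tilde \Sigma}:\tilde \Sigma \to \Sigma$ is a diffeomorphism, and $p^{-1}(\Sigma) = \bigsqcup_{n \in \mathbb{Z}} T^n(\tilde \Sigma)$. In particular $p_*[\tilde \Sigma] = [\Sigma] = \alpha$, giving (2).

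Next, define $d:\tilde M \to \mathbb{R}$ to be the signed distance to $\tilde \Sigma$, taken positive on $\tilde f^{-1}((0,\infty))$ and negative on $\tilde f^{-1}((-\infty,0))$. This function is $1$-Lipschitz, vanishes precisely on $\tilde \Sigma$, and is smooth on a tubular neighborhood $N_{2\delta}(\tilde \Sigma)$. Smooth $d$ outside $N_\delta(\tilde \Sigma)$ via Greene--Wu mollification (or a partition-of-unity convolution in local normal charts, glued to the unmodified signed distance via a cutoff supported away from $\tilde \Sigma$) to produce a smooth $\tilde d:\tilde M \to \mathbb{R}$ with $\tilde d \equiv d$ on $N_\delta(\tilde \Sigma)$, $|\tilde d - d| < \epsilon$ globally, and $\Lip(\tilde d) \leq 1+\epsilon$. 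Set $\rho := (1-2\epsilon)\tilde d$. Then $\Lip(\rho) \leq (1-2\epsilon)(1+\epsilon) < 1$, and on $N_\delta(\tilde \Sigma)$ we have $\rho = (1-2\epsilon)d$, so $\rho^{-1}(0) = \tilde \Sigma$ is a smooth embedded hypersurface with $d\rho$ transverse there, yielding (1).

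For surjectivity, suppose for contradiction that $\sup d \leq R < \infty$ and fix $x_0 \in \tilde \Sigma$. For each integer $n \geq 1$ there is then $y_n \in \tilde \Sigma$ with $d(T^n x_0, y_n) \leq R$; applying the deck isometry $T^{-n}$, the point $z_n := T^{-n} y_n \in \tilde f^{-1}(-n) \cap \overline{B_R(x_0)}$. By completeness of $\tilde M$ the closed ball $\overline{B_R(x_0)}$ is compact (Hopf--Rinow), so a subsequence $z_{n_k} \to z_*$ exists, and continuity of $\tilde f$ forces $\tilde f(z_*) = \lim_k (-n_k) = -\infty$, a contradiction. Hence $\sup d = +\infty$ and symmetrically $\inf d = -\infty$, so $\rho$ is unbounded on both sides of $\tilde \Sigma$ and surjective by the intermediate value theorem. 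The main technical obstacle lies in the smoothing step: preserving the zero set of $d$ on $N_\delta(\tilde \Sigma)$ so that $\rho^{-1}(0) = \tilde \Sigma$ exactly, while maintaining a global Lipschitz constant only slightly above $1$ outside; this demands careful gluing via a cutoff supported away from $\tilde \Sigma$ together with standard but delicate estimates for the convolution smoothing on a Riemannian manifold.
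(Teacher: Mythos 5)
Your construction follows the same overall route as the paper: apply Lemma \ref{pd1} to produce $f:M\to\mathbb S^1$ and $\Sigma=f^{-1}(\theta_0)$, pass to the infinite cyclic cover associated to $f$, take a single lift $\tilde\Sigma$, and modify the signed distance to $\tilde\Sigma$ (smoothing plus a $(1-2\epsilon)$ rescale) to obtain $\rho$ with $\Lip(\rho)<1$ and $\rho^{-1}(0)=\tilde\Sigma$. Your explicit Greene--Wu smoothing discussion is a welcome elaboration of the paper's terse ``by modifying the signed distance function.''

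Where you genuinely diverge is the surjectivity step, and there is a gap there. You assume the cut-and-glue produces a \emph{connected} cover equipped with a deck generator $T$ satisfying $\tilde f(Tx)=\tilde f(x)+1$, and the contradiction you derive (pushing $z_n=T^{-n}y_n$ into a fixed ball via Hopf--Rinow) relies essentially on $T$ acting on $\tilde M$. But $M\times_f\mathbb R$ is connected precisely when $f_*:\pi_1(M)\to\mathbb Z$ is surjective. If $f_*=0$ (equivalently, $\Sigma$ separates $M$, equivalently the image of the compactly supported Poincar\'e dual $\eta\in H^1_c(M)$ in $H^1(M)$ vanishes), then $M\times_f\mathbb R\cong M\times\mathbb Z$ is disconnected, the component $\tilde M$ containing $\tilde\Sigma$ is just $M$ itself, and $T$ does not restrict to a self-map of $\tilde M$. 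This situation is not vacuous: take $M=S^1\times\mathbb R$ with $\alpha=[S^1\times\{0\}]\neq 0$ in $H_1(M)$; here $\eta$ is killed by $H^1_c(M)\to H^1(M)$, $f$ is null-homotopic, $\Sigma$ separates, and your $T$-argument has nothing to act with. (The intermediate case $\mathrm{im}\,f_*=k\mathbb Z$, $k\ge 2$, you could salvage by using $T^k$, but $k=0$ cannot be rescued this way.) The paper sidesteps this by arguing instead that $[\tilde\Sigma]\neq 0$ in $H_{n-1}(\tilde M)$ (since it pushes forward to $\alpha\neq 0$), hence both $\tilde M^\pm=\tilde f^{-1}(\mathbb R^\pm)\cap\tilde M$ are non-compact, and then uses completeness of $\tilde M$ together with Hopf--Rinow (the set where $|d|\le R$ is compact because $\tilde\Sigma$ is compact) to conclude $d$ is unbounded on each side. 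That argument is insensitive to whether a deck transformation exists, and you should replace or supplement your Step 4 with it.
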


\begin{proof}
	Let  $\alpha \in H_{n-1}(M^n)$ be the non-zero class. From  Lemma \ref{pd1},  there is a smooth map $f$: $M^n \rightarrow\mathbb{S}^1$ satisfying that 
\begin{itemize}
\item the regular level set $\Sigma:=f^{-1}(1)$ is  a closed hypersurface; 
\item $[\Sigma]=\alpha$
\end{itemize} We consider the pullback
\begin{center}
\begin{equation}\label{lift1}
\begin{tikzcd}
M \times_{f} \mathbb{R}\arrow{r}{\tilde{f}}\arrow{d}{p_1}	&\mathbb{R} \arrow{d}{p_2}\\
M\arrow{r}{f} &\mathbb{S}^1
\end{tikzcd}
\end{equation}
\end{center}where $M \times_f \mathbb R := \{(x,t) \in M \times \mathbb R  : f(x) = p_2(t)   \}$ is  a covering space of $M$. 

The hypersurface $\tilde \Sigma := \tilde f^{-1}(0)$ is the lifting of $\Sigma$ (i.e. $(p_1)_*([\tilde \Sigma]) = [\Sigma]$), and we take $\tilde M$ to be all connected components of $M \times_f \mathbb R$ that contain $\tilde \Sigma$.

We denote $\tilde M^{\pm} := \tilde f^{-1}(\mathbb R^{\pm}) \cap \tilde M$, both of which are smooth manifolds with boundary $\tilde \Sigma$. Since $[\tilde \Sigma] \neq 0$ in $H_{n-1}(\tilde M)$, both $\tilde M^{\pm}$ are non-compact. Then we define a signed distance function 
\begin{equation*}
	d(x) := 
	\begin{cases}
		\dist(x, \tilde \Sigma) \quad & x\in \tilde M^+ \\
		-\dist(x, \tilde \Sigma) \quad & x \in \tilde M^-
	\end{cases}
\end{equation*}
By modifying the signed distance function $d(x)$, we can construct a proper map $\rho: \tilde M \to \mathbb R$ with $\Lip (\rho) < 1 $. Furthermore, $\rho$ is surjective by construction.

\end{proof}

\subsection{\texorpdfstring{$\mu$-bubbles}{mu-bubble }}

\begin{definition}
	A band is a connected compact manifold $M$ and a decomposition of boundary 
	\begin{equation*}
		\partial M = \partial_- \cup \partial_+
	\end{equation*}
	where $\partial_{\pm}$ are unions of connected components of $\partial M$ and both are non-empty.
\end{definition}

\begin{definition}\label{barrierDefn}
	Given a Riemannian band $(M,g;\partial_-,\partial_+)$, a function $\mu$ is said to satisfy the  \emph{barrier condition}
	if either $\mu\in C^\infty(\mathring M)$ with $\mu\rightarrow \pm\infty$
	on $\partial_{\mp}$, 
	or $\mu\in C^\infty(M)$ with
	\begin{equation}\label{barrierDef}
		\mu|_{\partial_-} > H_{\partial_-},\qquad \mu|_{\partial_+} < H_{\partial_+}
	\end{equation}
	where $H_{\partial_-}$ is the mean curvature of $\partial_-$ with respect to the inward normal and $H_{\partial_+}$ is the mean curvature of $\partial_+$ with respect to the outward normal.
\end{definition}

Choose a Caccioppoli set $\Omega_0$ with smooth boundary $\partial \Omega_0 \subset \mathring{M}$ and $\partial_+ \subset \Omega_0$. Consider the following functional 

\begin{equation}\label{mubbFunc}
	\Acal^\mu_{\Omega_0}(\Omega):= \Hcal^{n-1}(\partial^* \Omega)-\int_M(\chi_{\Omega}-\chi_{\Omega_0}) \mu \,\mathrm d \Hcal^n
\end{equation}
where $\Omega$ is any Caccioppoli set of $M$ with reduced boundary $\partial^* \Omega $ such that $\Omega \Delta \Omega_0 \Subset \mathring{M}$. 

The existence and regularity of a minimizer $\mathcal{A}^{\mu}$ among all Caccioppoli sets was claimed by Gromov in \cite[Section 5.2]{Gr2023}, and was rigorously carried out by the fourth-named author in \cite[Proposition 2.1]{Zhu21}.

\begin{lemma}\label{mubbExReg}{\rm (Cf. \cite[Proposition 2.1]{Zhu21})}
	Let $(M^n,g; \partial_-,\partial_+)$ be a Riemannian band with  $n\le 7$,
	and let $\Omega_0$ be a reference set.
	If $\mu$
	satisfies the barrier condition, 
	then there exists an $\Omega\in \Ccal_{\Omega_0}$ with smooth boundary such that
	$$
		\Acal^\mu_{\Omega_0}(\Omega)
		 = \inf_{\Omega'\in \Ccal_{\Omega_0}} \Acal_{\Omega_0}^\mu(\Omega').
	$$
\end{lemma}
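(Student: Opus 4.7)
The plan is to use the direct method in the calculus of variations. The main subtlety beyond the usual Plateau problem is to prevent the reduced boundaries of a minimizing sequence from escaping to $\partial M$; this is exactly the role of the barrier condition.

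First, I would take a minimizing sequence $\{\Omega_i\}\subset\Ccal_{\Omega_0}$. The key step is to show that $\partial^*\Omega_i$ stays inside a fixed compact subset of $\mathring M$. In the first alternative of Definition \ref{barrierDefn}, where $\mu\to\pm\infty$ on $\partial_\mp$, a cut-off comparison works: if $\Omega_i$ comes within distance $\delta$ of $\partial_-$, then replacing $\Omega_i$ by $\Omega_i\setminus\{\dist(\cdot,\partial_-)<\delta\}$ strictly decreases the functional once $\delta$ is small, since by the coarea formula the bulk gain from the blow-up $\mu\to+\infty$ dominates the perimeter cost along a level set of the distance function. A symmetric argument using $\mu\to-\infty$ on $\partial_+$ handles the other side. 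In the second alternative, I would instead foliate collar neighborhoods of $\partial_\pm$ by equidistant hypersurfaces, whose mean curvatures are close to $H_{\partial_\pm}$; the strict inequalities $\mu|_{\partial_-}>H_{\partial_-}$ and $\mu|_{\partial_+}<H_{\partial_+}$ then allow these foliations to serve as barriers via the maximum principle for prescribed mean curvature. In either case one obtains a $\delta_0>0$ with $\dist(\partial^*\Omega_i,\partial M)\ge\delta_0$ for all $i$.

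Once the sequence is confined to a compact subset of $\mathring M$, the perimeters $\Hcal^{n-1}(\partial^*\Omega_i)$ are uniformly bounded: the bulk term is controlled by $\sup|\mu|$ on the confining region together with $\vol(\Omega_i\Delta\Omega_0)$, itself bounded a priori by the energy. Standard BV compactness yields $\chi_{\Omega_i}\to\chi_\Omega$ in $L^1_{\mathrm{loc}}$ for some $\Omega\in\Ccal_{\Omega_0}$. The bulk term passes to the limit by dominated convergence, while the perimeter term is lower semi-continuous, so $\Omega$ realizes the infimum.

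For regularity, observe that $\Omega$ is an almost-minimizer of perimeter in $\mathring M$: any competitor $\Omega'$ differing from $\Omega$ inside a ball $B_r(x)\Subset\mathring M$ satisfies
$$\Hcal^{n-1}(\partial^*\Omega\cap B_r)\le \Hcal^{n-1}(\partial^*\Omega'\cap B_r)+C\,r^n,$$
with $C$ depending only on $\sup_{B_r}|\mu|$. The De Giorgi--Federer regularity theory for almost-minimizers then yields that $\partial^*\Omega$ is smooth away from a singular set of Hausdorff dimension at most $n-8$; since $n\le 7$, this set is empty and $\partial\Omega$ is a smooth embedded hypersurface in $\mathring M$. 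The main obstacle I anticipate is the rigorous confinement argument in the blow-up case, where $\mu$ is only locally integrable near $\partial M$ and the comparison computation needs to be made precise; one standard workaround is to minimize first against a truncation $\mu_k:=\max\{\min\{\mu,k\},-k\}$, to show via the same cut-off argument that the truncation is not felt when $k$ is large enough, and then to pass to the limit in $k$.
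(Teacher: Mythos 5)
The paper does not actually prove this lemma: it is stated with a citation to \cite[Proposition~2.1]{Zhu21}, and the existence and regularity are simply invoked. So there is no internal proof to compare against. Your overall strategy---direct method, confinement via the barrier condition, BV compactness plus lower semicontinuity, and then $(\Lambda,r_0)$-minimizer regularity with no singular set when $n\le 7$---is the standard one and is indeed what the cited reference carries out. The regularity half of your proposal is correct as written.

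The confinement half, however, has a sign problem in the unbounded-$\mu$ alternative. With the paper's conventions ($\partial_+\subset\Omega_0$, so competitors are empty near $\partial_-$, and $\mu\to+\infty$ on $\partial_-$), removing the bump $E=\Omega_i\cap\{\dist(\cdot,\partial_-)<\delta\}$ changes the functional by $\bigl[\Per(\Omega_i\setminus E)-\Per(\Omega_i)\bigr]+\int_E\mu$, and the bulk term $\int_E\mu$ is \emph{positive}, so your cut-off comparison as stated \emph{increases} the energy rather than decreasing it; the claim that "the bulk gain from the blow-up $\mu\to+\infty$ dominates the perimeter cost" runs in the wrong direction. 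The confinement in the blow-up case is in fact obtained the way you describe for the finite case: truncate $\mu$ to $\mu_k$, use the strict inequalities $\mu_k|_{\partial_-}>H_{\partial_-}$ and $\mu_k|_{\partial_+}<H_{\partial_+}$ (which hold for $k$ large) together with the maximum principle against an equidistant foliation applied to the \emph{minimizer} $\Omega_k$ of the truncated problem, and then observe that $\partial^*\Omega_k$ lies in a compact subset of $\mathring M$ that is independent of $k$. Your last paragraph gestures at exactly this truncation scheme, but the phrase "the truncation is not felt when $k$ is large enough" needs to be justified by this uniform barrier bound, not by the direct cut-off comparison you use in the first alternative; otherwise the step does not go through, and in fact without it one does not even know that $\inf_{\Ccal_{\Omega_0}}\Acal^\mu>-\infty$ when $\mu$ blows up non-integrably near $\partial M$.
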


\begin{remark}
	In Lemma~\ref{mubbExReg}
	the smooth hypersurface $\Sigma:=\partial\Omega\setdiff\partial_-$
	is homologous to $\partial_+$. 
\end{remark}

We next discuss the first and second variations of $\mu$-bubble.
\begin{lemma}\label{mubbFirstVar}{\rm (Cf. \cite[lemma 13]{2020Generalized})}
	If $\Omega_t$ is a smooth $1$-parameter family of regions with $\Omega_0 = \Omega$ and normal speed $\psi$ at $t=0$, then 
	\begin{equation*}
		\frac{d}{dt} \Acal^{\mu}(\Omega_t) = \int _{\Sigma_t} (H-\mu ) \psi \, \rm d\Hcal^{n-1}
	\end{equation*}
	where $H$ is the scalar mean curvature of $\partial \Omega_t$. In particular, a $\mu$-bubble $\Omega$ satisfies 
	\begin{equation*}
		H=\mu 
	\end{equation*}
	along $\partial \Omega$.
\end{lemma}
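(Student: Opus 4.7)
The plan is straightforward: split the functional $\Acal^\mu_{\Omega_0}$ into its perimeter piece and its weighted-volume piece and differentiate each separately. Let $X$ be a smooth vector field on $M$ generating the family $\Omega_t$; since $\Omega_t \Delta \Omega_0 \Subset \mathring M$, one may arrange $X$ to have compact support in $\mathring M$, with normal component $\langle X, \nu\rangle = \psi$ along $\Sigma_0 = \partial\Omega$, where $\nu$ is the outward unit normal.

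For the perimeter piece I would invoke the classical first variation of area,
\begin{equation*}
	\frac{d}{dt}\Big|_{t=0} \Hcal^{n-1}(\partial^*\Omega_t) = \int_{\Sigma_0} \mathrm{div}_{\Sigma_0} X \, \mathrm{d}\Hcal^{n-1} = \int_{\Sigma_0} H \psi \, \mathrm{d}\Hcal^{n-1},
\end{equation*}
where the tangential part of $X$ contributes no boundary term because $X$ has compact support away from $\partial M$, so only the normal (mean-curvature) contribution survives. For the weighted-volume piece, writing
\begin{equation*}
	\int_M (\chi_{\Omega_t} - \chi_{\Omega_0})\mu \, \mathrm{d}\Hcal^n = \int_{\Omega_t}\mu \, \mathrm{d}\Hcal^n - \int_{\Omega_0}\mu \, \mathrm{d}\Hcal^n,
\end{equation*}
the standard formula for differentiating an integral over a moving domain (equivalently, the coarea formula) gives the $t$-derivative at $t=0$ as $\int_{\Sigma_0}\mu \psi \, \mathrm{d}\Hcal^{n-1}$, with sign fixed by the convention that $\psi>0$ corresponds to $\Sigma$ moving outward (i.e.\ $\Omega_t$ expanding). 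Subtracting the two contributions yields the asserted formula.

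The Euler--Lagrange consequence is then immediate: if $\Omega$ is a $\mu$-bubble, every admissible variation is critical, so $\int_{\Sigma_0}(H-\mu)\psi \, \mathrm{d}\Hcal^{n-1} = 0$ for all smooth compactly supported $\psi$, and the fundamental lemma of the calculus of variations forces $H = \mu$ pointwise on $\Sigma_0$. I do not anticipate any genuine obstacle here — the computation is entirely standard. The only points requiring care are the sign convention for the normal speed and the verification that the variation stays in the admissible class $\Ccal_{\Omega_0}$ (i.e.\ the symmetric difference remains compactly contained in $\mathring M$ and $\partial_\pm$ is preserved); both are automatic because $X$ vanishes in a neighborhood of $\partial_\pm$, which also kills any potential boundary correction in the divergence identity.
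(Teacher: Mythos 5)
The paper provides no proof of this lemma; it is stated with the citation ``Cf.\ \cite[Lemma 13]{2020Generalized}'' and used as a black box. Your argument is the standard first-variation computation that matches that reference: you correctly split $\Acal^\mu_{\Omega_0}$ into the perimeter and weighted-volume terms, differentiate the former via the classical first variation of area (picking up $\int_{\Sigma} H\psi$ with $H$ taken relative to the outward normal), differentiate the latter via the moving-domain/coarea formula (picking up $\int_{\Sigma}\mu\psi$ with the matching sign), subtract, and then extract the Euler--Lagrange equation $H=\mu$ by the fundamental lemma of the calculus of variations; your remarks on compact support of the variation field away from $\partial_\pm$ correctly address both admissibility and the absence of boundary terms.
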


\begin{lemma}\label{mubbSecondVar}{\rm (Cf. \cite[lemma 14]{2020Generalized})}
	Consider a $\mu$-bubble $\Omega$ with $\partial \Omega = \Sigma$. Assume that $\Omega_t$ is a smooth $1$-parameter family of regions with $\Omega_0 = \Omega$ and normal speed $\psi$ at $t=0$, then $\mathcal Q(\psi) := \left. \frac{d^2}{dt^2} \right|_{t=0} (\Acal(\Omega_t)) \ge 0 $ where $\mathcal Q(\psi)$ satisfies 
	\begin{equation*}
		\mathcal Q(\psi) = \int_{\Sigma} |\nabla_{\Sigma} \psi|^2 + \frac{1}{2} {\rm{Sc}}(\Sigma)  \psi^2 -\frac{1}{2}(|\rm{II}_{\Sigma}|^2+{\rm{Sc}}(M) + \mu^2 - 2 \langle \nabla_M \mu , \nu  \rangle ) \psi^2 \, \rm d\Hcal^{n-1}
	\end{equation*} 
	where $\nu$ is the outwards pointing unit normal.
	
\end{lemma}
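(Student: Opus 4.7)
The plan is to differentiate the first variation formula in Lemma \ref{mubbFirstVar} once more at the critical point $\Omega$ and then invoke the twice-traced Gauss equation on the hypersurface $\Sigma\hookrightarrow M$. The non-negativity $\mathcal{Q}(\psi)\ge 0$ is immediate from the minimizing property of $\Omega$ established in Lemma \ref{mubbExReg}, since any smooth one-parameter family through a minimizer has non-negative second derivative. The real content is thus the explicit formula for $\mathcal{Q}(\psi)$.

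First, I would start from
\begin{equation*}
\frac{d}{dt}\Acal^\mu(\Omega_t) = \int_{\Sigma_t}(H_{\Sigma_t} - \mu|_{\Sigma_t})\psi_t \, d\Hcal^{n-1}
\end{equation*}
and differentiate once more in $t$. A priori this splits into three pieces: the $t$-derivative of the integrand $H_t-\mu_t$, the $t$-derivative of the normal speed $\psi_t$, and the $t$-derivative of the induced volume form. Because $\Omega$ is a critical point, the Euler-Lagrange equation $H_{\Sigma_0}=\mu$ from Lemma \ref{mubbFirstVar} makes the last two pieces vanish at $t=0$, leaving
\begin{equation*}
\left.\frac{d^2}{dt^2}\right|_{t=0}\Acal^\mu(\Omega_t) = \int_\Sigma \bigl(\partial_t H_{\Sigma_t} - \partial_t(\mu|_{\Sigma_t})\bigr)\big|_{t=0}\,\psi\, d\Hcal^{n-1}.
\end{equation*}

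Second, I would insert the two standard evolution identities for a normal variation with initial speed $\psi\nu$:
\begin{equation*}
\partial_t H_{\Sigma_t}\big|_{t=0} = -\Delta_\Sigma\psi - (|\mathrm{II}_\Sigma|^2 + \Ric_M(\nu,\nu))\psi,\qquad \partial_t(\mu|_{\Sigma_t})\big|_{t=0} = \langle\nabla_M\mu,\nu\rangle\psi,
\end{equation*}
and integrate by parts on the Laplacian term to convert $-\psi\Delta_\Sigma\psi$ into $|\nabla_\Sigma\psi|^2$. This produces an intermediate formula of the form
\begin{equation*}
\mathcal{Q}(\psi) = \int_\Sigma |\nabla_\Sigma\psi|^2 - \bigl(|\mathrm{II}_\Sigma|^2 + \Ric_M(\nu,\nu)\bigr)\psi^2\,d\Hcal^{n-1} \pm \int_\Sigma \langle\nabla_M\mu,\nu\rangle \psi^2\, d\Hcal^{n-1},
\end{equation*}
where the sign on the last term depends on the orientation convention for $\nu$ relative to $\Omega$.

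Third, I would apply the twice-traced Gauss equation
\begin{equation*}
2\Ric_M(\nu,\nu) = \mathrm{Sc}(M) - \mathrm{Sc}(\Sigma) + H^2 - |\mathrm{II}_\Sigma|^2
\end{equation*}
to eliminate $\Ric_M(\nu,\nu)$, and use $H=\mu$ to replace $H^2$ by $\mu^2$. Collecting terms gives
\begin{equation*}
\mathcal{Q}(\psi) = \int_\Sigma |\nabla_\Sigma\psi|^2 + \tfrac{1}{2}\mathrm{Sc}(\Sigma)\psi^2 - \tfrac{1}{2}\bigl(|\mathrm{II}_\Sigma|^2 + \mathrm{Sc}(M) + \mu^2 - 2\langle\nabla_M\mu,\nu\rangle\bigr)\psi^2\, d\Hcal^{n-1}.
\end{equation*}
The whole argument parallels the classical second variation of area for stable minimal hypersurfaces, which is recovered when $\mu\equiv 0$. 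The only delicate step, and the one I would expect to be the main obstacle, is the careful bookkeeping of orientation and sign conventions (outward vs.\ inward $\nu$, mean curvature as $\tr\,\mathrm{II}$ vs.\ its negative, and consistency with the barrier inequalities in Definition \ref{barrierDefn}), which determines the precise sign of the $\langle\nabla_M\mu,\nu\rangle$ term.
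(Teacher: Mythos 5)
Your reconstruction follows the only natural route: differentiate the first variation at the critical point, use the Jacobi linearization of the mean curvature, and eliminate $\Ric(\nu,\nu)$ via the traced Gauss equation with $H=\mu$. This is exactly the derivation in Chodosh--Li \cite[Lemma 14]{2020Generalized}, which the paper cites in lieu of a proof, so the structure of your argument is right. The non-negativity from minimality is also correctly disposed of.

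Where you stop short is exactly the point you flag: you leave the sign of the $\langle\nabla_M\mu,\nu\rangle$ term as $\pm$ and then silently adopt the sign in the statement. That sign is in fact pinned down by the already-stated first variation. For $\frac{d}{dt}\Acal^\mu(\Omega_t)=\int_{\Sigma_t}(H-\mu)\psi$ to hold with $H=\mathrm{div}_\Sigma\nu$ (so spheres have $H>0$), one must take $\psi$ to be the speed in the direction of the \emph{outward} normal $\nu$; this is forced because the enclosed-volume term contributes $-\int_\Sigma\mu\psi$. With that convention, $\partial_t(\mu|_{\Sigma_t})=+\psi\langle\nabla_M\mu,\nu\rangle$, and $\partial_t H=-\Delta_\Sigma\psi-(|\mathrm{II}_\Sigma|^2+\Ric(\nu,\nu))\psi$, which yields
\begin{equation*}
\mathcal Q(\psi)=\int_\Sigma |\nabla_\Sigma\psi|^2+\tfrac12\,{\rm Sc}(\Sigma)\psi^2-\tfrac12\bigl(|\mathrm{II}_\Sigma|^2+{\rm Sc}(M)+\mu^2+2\langle\nabla_M\mu,\nu\rangle\bigr)\psi^2\,d\Hcal^{n-1},
\end{equation*}
i.e.\ $+2\langle\nabla_M\mu,\nu\rangle$ rather than the $-2\langle\nabla_M\mu,\nu\rangle$ appearing in the lemma. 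This agrees with the cited Chodosh--Li formula, which has $-\nu(h)\psi^2$ with $\nu$ outward, so the sign in the paper's statement looks like a transcription slip. It is harmless in the sequel because the only use made of this term (e.g.\ in Lemma \ref{WidthEstimate}) is the estimate $\mp 2\langle\nabla_M\mu,\nu\rangle\ge -2|\nabla_M\mu|$, which is insensitive to the sign; but you should carry out this bookkeeping rather than defer it, since an unresolved sign in the second variation is precisely the kind of ambiguity that can later break the barrier comparison.
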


It is well-known that if the scalar curvature of a torical band $M$ is bounded below by ${\rm{Sc}}(\mathbb{S}^n) = n(n-1)$, then
\begin{equation*}
	{\rm{width}}(M) = \dist(\partial_-,\partial_+) \leq \frac{2\pi	}{n}.
\end{equation*}
However, if the scalar curvature is only bounded below by $\sigma$ on a neighborhood of a separating hypersurface and ${\rm{Sc}}(M) > 0$ (actually, not too negative), then the distance from this hypersurface to the boundary $\partial M$ can not be too large. 

The following lemma also appears in \cite{Cecchini2023, hao2023llarull}.

\begin{lemma}\label{WidthEstimate}
	Let $(M^n,g; \partial_-,\partial_+)$ be a compact Riemannian band with  $n\le 7$ associated with a smooth function
	\begin{equation*}
		\rho : (M,\partial_{\pm}) \to ([-D,D], \pm D)
	\end{equation*}
	satisfying 
	\begin{itemize}
		\item[(1)] $\Lip (\rho ) < 1;$
		\item[(2)] The scalar curvature ${\rm{Sc}}(M)  \ge \sigma >0$ in $\rho^{-1}([-1,1])$ and ${\rm{Sc}}(M) > 0 $ everywhere in $M$.
		\item[(3)] Any closed hypersurface $\Sigma$ in $M$ which separates $\partial_-$ from $\partial_+$ admits no metric with positive scalar curvature.
	\end{itemize}
	Then we have $D \le D_0$ where $D_0=D_0(\sigma) >1$ is a positive constant depending only on $\sigma$.
\end{lemma}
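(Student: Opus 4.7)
The plan is to argue by contradiction using Gromov's $\mu$-bubble machinery. Assuming $D > D_0$ for a constant $D_0(\sigma) > 1$ to be determined, the goal is to produce a closed hypersurface separating $\partial_-$ from $\partial_+$ that admits a metric of positive scalar curvature, contradicting hypothesis (3).

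The first step is to construct a warping function $h \in C^\infty(-D, D)$ blowing up to $+\infty$ at $-D$ and to $-\infty$ at $D$, and to set $\mu := h \circ \rho$. The key point is to ensure the pointwise inequality
\[
{\rm Sc}(M)(x) + h(\rho(x))^2 - 2|h'(\rho(x))| \ge \varepsilon > 0
\]
everywhere on $M$, for some small $\varepsilon > 0$. Inside the strip $\rho^{-1}([-1,1])$ where ${\rm Sc}(M) \ge \sigma$, this can be achieved by taking $h$ to solve a tangent-type ODE of the form $2h' + h^2 = -(\sigma - \varepsilon)$. On each tail $\rho \in (-D, -1)$ or $(1, D)$, the function $h$ is extended so that $h^2 + 2h' \le 0$, so that the inequality is maintained using only ${\rm Sc}(M) > 0$, while still blowing up at $\pm D$ to supply the barrier of Definition \ref{barrierDefn}. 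The largest $D$ for which this ODE system admits such a globally smooth $h$ matching at the endpoints of the strip is taken as $D_0(\sigma)$.

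Next, Lemma \ref{mubbExReg} furnishes a smooth minimizer $\Omega$ of the $\mu$-bubble functional; the hypersurface $\Sigma := \partial\Omega \setminus \partial_-$ is closed, smooth, and homologous to $\partial_+$, hence separates $\partial_-$ from $\partial_+$. Applying Lemma \ref{mubbSecondVar} with a test function $\psi \in C^\infty(\Sigma)$, and using $|\langle \nabla\mu, \nu\rangle| \le |h'(\rho)|\,|\nabla\rho| \le |h'(\rho)|$ from $\Lip(\rho) < 1$, I obtain
\[
\int_\Sigma 2|\nabla \psi|^2 + {\rm Sc}(\Sigma)\,\psi^2 \;\ge\; \int_\Sigma \bigl({\rm Sc}(M) + h^2 - 2|h'|\bigr)\psi^2 \;\ge\; \varepsilon \int_\Sigma \psi^2.
\]

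From this I derive positive scalar curvature on $\Sigma$. For $n \ge 4$, so $\dim \Sigma = n-1 \ge 3$, the inequality implies that the conformal Laplacian $L_c = -\frac{4(n-2)}{n-3}\Delta + {\rm Sc}(\Sigma)$ has first eigenvalue at least $\varepsilon > 0$ (since $\frac{4(n-2)}{n-3} > 2$), so a Schoen--Yau conformal change $\tilde g = u^{4/(n-3)} g_\Sigma$ with $u > 0$ the first eigenfunction yields a PSC metric on $\Sigma$. For $n = 3$, $\dim \Sigma = 2$, and testing with $\psi \equiv 1$ gives $\int_\Sigma {\rm Sc}(\Sigma) \ge \varepsilon\, |\Sigma| > 0$; Gauss--Bonnet then forces $\chi(\Sigma) > 0$, so $\Sigma$ is a disjoint union of $2$-spheres and certainly admits a PSC metric. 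Either way, this contradicts hypothesis (3). The main obstacle is the ODE construction of $h$ in the first step: matching the tangent solution inside the strip to a tail extension that blows up at $\pm D$ under the sign constraint $h^2 + 2h' \le 0$ is precisely what produces the explicit bound $D \le D_0(\sigma)$.
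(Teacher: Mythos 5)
Your proposal is correct and follows essentially the same route as the paper: construct a warping function $h$ blowing up at $\pm D$ (tangent-type profile inside the strip $\rho^{-1}([-1,1])$ matched to tails satisfying $h^2 + 2h' \le 0$, for which the paper uses $\frac{2}{x-c}$ so that $h^2 + 2h' = 0$), set $\mu = h\circ\rho$, extract a separating $\mu$-bubble $\Sigma$ via Lemma~\ref{mubbExReg}, and feed the stability inequality of Lemma~\ref{mubbSecondVar} into the conformal Laplacian (for $\dim\Sigma\ge 3$) or Gauss--Bonnet (for $\dim\Sigma = 2$) to contradict hypothesis (3). The paper additionally flags the minor technicality that $h$ must be smoothed by precomposing with a monotone cutoff $\phi$, which you should include for completeness, but your ODE-matching description of $D_0(\sigma)$ is the same bound.
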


\begin{proof} Suppose the contrary  that $D > 2(1-2h^{-1}_0(1))$, where $h_0(x) = -\sqrt{\sigma} \tan(\frac{\sqrt{\sigma}}{2}x)$.  We first construct a function $\mu$ that satisfies the barrier condition. Then use it to find the $\mu$-bubble $\Sigma$ admitting a metric with positive scalar curvature and separating  $M$, which leads to a contradiction.
	
	Consider the piecewise smooth function
	\begin{equation*}
		h(x):= 
		\begin{cases}
			-\infty  \quad &x \ge 1-2h^{-1}_0(1) \\
			\frac{2}{x-1+2h^{-1}_0(1)} \quad  &1 \le x \le1-2h^{-1}_0(1)\\
			-\sqrt{\sigma } \tan(\frac{\sqrt{\sigma }}{2} x) \quad  &-1 \le x\le 1 \\
			\frac{2}{x+1+2h_0^{-1}(-1)} \quad &  -1-h^{-1}_0(-1) \le x\le-1\\
			+\infty \quad &x \le -1-h^{-1}_0(-1)
		\end{cases}
	\end{equation*}
 Ignore the smoothness of $h$ and choose $\mu: = h \circ \rho $. We directly calculate  that
	\begin{equation*}
		{\rm{Sc}}(M) + \mu ^2 - 2|\nabla_M \mu | > 0.
	\end{equation*}
From the hypothesis,  $\mu$ satisfies the barrier condition. By the Lemma \ref{mubbExReg}, the $\mu$-bubble $\Sigma$ exists and separates $M$. 
	
	We now show that $\Sigma$ admits a positive scalar curvature metric. Combining  with the second variation formula in Lemma \ref{mubbSecondVar}, we have
	\begin{equation*}
		\int_{\Sigma} |\nabla_{\Sigma} \psi |^2 + \frac{1}{2} {\rm{Sc}}(\Sigma) \psi^2 > 0.
	\end{equation*} for any smooth function $\psi$ on $\Sigma$. 
We have the operator 
	\begin{equation*}
		-\Delta_{\Sigma} + \frac{1}{2} {\rm{Sc}}(\Sigma) > 0.
	\end{equation*}
	Hence if $n \ge 3$, the conformal Laplacian $L = -4\frac{n-1}{n-2} \Delta_{\Sigma} + {\rm{Sc}}(\Sigma)$ has positive first eigenvalue. As a conclusion, $\Sigma$ admits a PSC metric. If $n=2$, it is obtained directly by the Gauss-Bonnet theorem.

	Let $\phi : [-D,D] \to [-D,D]$ be a monotonically non decreasing smooth function such that $\phi \equiv \pm 1 $ near the neighborhood of $\pm 1$ and $\Lip (\phi) \le 1+\varepsilon$. The composition $\tilde h = h \circ \phi $ is smooth and we use $\tilde \mu = \tilde h \circ \rho $, instead of $\mu$ and the same argument to complete the proof.
\end{proof}

\begin{definition}\label{wrapped}
Let $(M,g)$ be a Riemannian manifold. Given any function $\sigma:M\to \mathbb R$ we say that $(M,g)$ has its $T^*$-stabilized scalar curvature bounded below by $\sigma$ if there are finitely many smooth positive functions $u_1,\ldots,u_k$ such that the warped metric
$$
g_{warp}=g+\sum_{i=1}^k u_i^2\mathrm d\theta_i^2\mbox{ on }M\times \mathbb T^k
$$
satisfies ${\rm{Sc}}(g_{warp})\geq \sigma\circ \pi_M$, where $\pi_M:M\times \mathbb T^k\to M$ is denoted to be the canonical projection map.
\end{definition}

\begin{remark}\label{no-circle-T-scal}  For example, $\mathbb{S}^1$ has no metric with positive $T^*$-stabilized scalar curvature. The reason is as follows: 

If not, there is a $\mathbb{T}^n$ with positive scalar curvature. However, the work of Gromov-Lawson \cite{Gr1983} suggests that $T^{n+1}$ has no metric with positive scalar curvature, which leads to a contradiction.  

\end{remark}

We generalize the Lemma \ref{WidthEstimate} to $T^*$-stabilized scalar curvature.

\begin{lemma}\label{TStableWidthEstimate}
	Let $(M^n,g; \partial_-,\partial_+)$ be a compact Riemannian band with  $n\le 7$ associated with a smooth function
	\begin{equation*}
		\rho : (M,\partial_{\pm}) \to ([-D,D], \pm D)
	\end{equation*}
	satisfying 
	\begin{itemize}
		\item[(1)] $\Lip (\rho ) < 1;$
		\item[(2)] $(M,g)$ has its $T^*$-stabilized scalar curvature bound below by $\sigma$, where $\sigma:M\to \mathbb R$ is a function such that ${\rm{Sc}}(g) \geq c$ in $\rho^{-1}([-1,1])$ and ${\rm{Sc}}(g) \geq 0$ everywhere in $M$
		\item[(3)] Any closed hypersurface $\Sigma$ in $M$ which separates $\partial_-$ from $\partial_+$ admits no metric with positive $T^*$-stabilized scalar curvature.
	\end{itemize}
	Then we have $D \le D_0$ where $D_0=D_0(c) >1$ is a positive constant depending only on $c$.
\end{lemma}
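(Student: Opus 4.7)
The plan is to reduce to Lemma \ref{WidthEstimate} via the warped product trick. By the $T^*$-stabilized hypothesis there exist positive smooth functions $u_1,\ldots,u_k$ on $M$ such that
$$
(\tilde M,\tilde g):=\bigl(M\times\mathbb T^k,\ g+\sum_{i=1}^k u_i^2\,\mathrm d\theta_i^2\bigr)
$$
satisfies ${\rm{Sc}}(\tilde g)\geq \sigma\circ\pi_M$. Pulling $\rho$ back to $\tilde\rho:=\rho\circ\pi_M$, the projection $\pi_M$ is $1$-Lipschitz for $\tilde g$, so $\Lip(\tilde\rho)<1$, and the region-wise bound on $\sigma$ transfers to ${\rm{Sc}}(\tilde g)\geq c$ on $\tilde\rho^{-1}([-1,1])$ and ${\rm{Sc}}(\tilde g)\geq 0$ on $\tilde M$. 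Hence hypotheses (1) and (2) of Lemma \ref{WidthEstimate} hold for the band $(\tilde M,\tilde g;\partial_-\times\mathbb T^k,\partial_+\times\mathbb T^k)$.

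Assuming for contradiction that $D$ exceeds the constant $D_0(c)$ produced by Lemma \ref{WidthEstimate}, I would copy its construction: choose the same piecewise smooth function $h$ with $\sigma$ replaced by $c$, set $\tilde\mu:=h\circ\tilde\rho$, and observe that $\tilde\mu$ satisfies the barrier condition on $\tilde M$ with ${\rm{Sc}}(\tilde g)+\tilde\mu^2-2|\nabla\tilde\mu|>0$. Since $\dim\tilde M=n+k$ may exceed $7$, Lemma \ref{mubbExReg} does not apply to $\tilde M$ directly; instead, I would minimize $\mathcal A^{\tilde\mu}$ only among the $\mathbb T^k$-invariant Caccioppoli sets $\tilde\Omega=\Omega\times\mathbb T^k$. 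This reduces to a weighted $\mu$-bubble problem on $M$ with weight $w=\prod_i u_i$, which admits a smooth minimizer $\Sigma\subset M$ separating $\partial_-$ from $\partial_+$ in dimension $n\leq 7$ by standard arguments. The product $\tilde\Sigma=\Sigma\times\mathbb T^k$ is then a stable $\tilde\mu$-bubble of $(\tilde M,\tilde g)$ with respect to invariant variations. Taking an invariant positive first eigenfunction $\tilde w=w\circ\pi_\Sigma$ of the associated stability operator and running the Schoen-Yau conformal-warp calculation exactly as in the proof of Lemma \ref{WidthEstimate} yields
$$
{\rm{Sc}}\bigl(\tilde\Sigma\times_{\tilde w}\mathbb S^1\bigr)\geq {\rm{Sc}}(\tilde g)+\tilde\mu^2-2|\nabla\tilde\mu|>0.
$$
The resulting metric on $\tilde\Sigma\times\mathbb S^1=\Sigma\times\mathbb T^{k+1}$ is warped over $\Sigma$ with warping functions $u_1|_\Sigma,\ldots,u_k|_\Sigma,w$, so by Definition \ref{wrapped} the hypersurface $\Sigma$ carries a metric with positive $T^*$-stabilized scalar curvature. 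This contradicts hypothesis (3) and forces $D\leq D_0(c)$.

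The hard part will be making the equivariant minimization rigorous: verifying that a smooth $\mathbb T^k$-invariant minimizer of $\mathcal A^{\tilde\mu}$ exists (equivalently, that the weighted $\mu$-bubble functional on $M$ has a smooth minimizer in dimension $n\leq 7$), that the resulting product $\tilde\Sigma=\Sigma\times\mathbb T^k$ is a critical point of the full $\tilde\mu$-bubble functional on $\tilde M$ so that the first-variation identity $H=\tilde\mu$ holds, and that the second variation of Lemma \ref{mubbSecondVar} applied to invariant test functions reproduces the inequality exploited in the Schoen-Yau step. Once these points are handled, the ambient dimensional constraint collapses back to $n\leq 7$ and the constant $D_0(c)$ coincides with the one produced by Lemma \ref{WidthEstimate}.
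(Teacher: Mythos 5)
Your proposal is correct and matches the paper's argument essentially step for step: both reduce to a weighted $\mu$-bubble problem on $M$ by minimizing the warped functional only over $\mathbb T^k$-invariant (product) competitors $\Sigma\times\mathbb T^k$, invoke GMT in dimension $n\leq 7$ for existence and regularity of the minimizer, and apply the Fischer-Colbrie--Schoen warping trick to the stable slice to exhibit a positive $T^*$-stabilized scalar curvature metric on a separating hypersurface, contradicting hypothesis (3). The technical points you flag at the end are the same ones the paper absorbs into the phrase ``from geometric measure theory''; handling them as you outline (via torus-averaging to recover criticality and second-variation control for arbitrary variations) is standard and faithful to the paper's intent.
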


\begin{proof}
For  a constant $c>0$,  one can construct a smooth function $h_c:(-D_0,D_0)\to \mathbb R$ as in the proof of Lemma \ref{WidthEstimate} satisfying that 
\begin{itemize}
\item $h'_c\le0$ and
$$
\lim_{t\to\pm D_0}h_c(t)=\mp\infty;
$$
\item $2h_c'+h_c^2+c\chi_{[-1,1]}>0$, where $\chi_{[-1,1]}$ is the characteristic function of the interval $[-1,1]$.
\end{itemize}
In the following, we are going to deduce a contradiction when $D\geq D_0$. 
By Definition \ref{wrapped}, there are finitely many smooth positive functions $u_1,\ldots,u_k$ on $M$ such that the warped metric
$$
g_{warp}=g_{M}+\sum_{i=1}^k u_i^2\mathrm d\theta_i^2\mbox{ on }M\times \mathbb T^k
$$
satisfies ${\rm{Sc}} (g_{warp})\geq \sigma\circ \pi_M$, where $\pi_M:M\times \mathbb T^k\to M$ is denoted to be the canonical projection map. Suppose that 
$$M_{D_0}:=\rho^{-1}([-D_0,D_0])\times \mathbb T^k$$
and consider the functional
$$
\mathcal A^{h_c}(\Sigma \times \mathbb T^k):=\mathcal H^{k+1}(\Sigma\times \mathbb T^k)-\int_{\Omega\times \mathbb T^k}h_c\circ  \rho\,\mathrm d\mathcal H^{k+2},
$$
where $\Sigma$ is any closed hypersurface separating two boundaries $\rho^{-1}(\pm D_0)$ and bounds a region $\Omega$ with $\rho^{-1}(-D_0)$. From geometric measure theory, we can find a smooth minimizer $\Sigma_{min}\times \mathbb T^k$ of the functional $\mathcal A^{h_c}$. Using the stability and applying the warping product trick by Fischer-Colbrie and Schoen \cite{FC-S1980}, we can find a warped metric
$$
\hat g_{warp}=g_{\Sigma_{min}}+\sum_{i=1}^{k+1}u_i^2\mathrm d\theta_i^2\mbox{ on }\Sigma_{min}\times \mathbb T^{k+1}
$$
with positive scalar curvature, which contradicts our assumptions.
\end{proof}

\begin{corollary}\label{CloseCase}
Assume that $(\Sigma^2,g_\Sigma)$ is a complete and orientable connected surface having its $T^*$-stabilized scalar curvature bounded below by a positive function $\sigma$. Then $\Sigma^2$ is conformally equivalent to a plane or a sphere.
\end{corollary}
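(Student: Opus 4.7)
The plan is to reduce the conformal conclusion to a topological dichotomy—namely that $\Sigma$ is either a closed sphere, or non-compact and simply connected—by a width-estimate contradiction using Lemma \ref{TStableWidthEstimate}, and then to appeal to the uniformization theorem together with a Fischer--Colbrie--Schoen stability observation to produce the stated conformal type.

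If $\Sigma$ is compact, then the warped product $\Sigma \times \mathbb{T}^k$ equipped with $g_{warp}$ is a closed Riemannian $(2+k)$-manifold with strictly positive scalar curvature. For $\Sigma$ of genus at least one, $\Sigma \times \mathbb{T}^k$ is aspherical and in fact an SYS manifold (the class $[\Sigma\times\{\mathrm{pt}\}]$ detected by cupping the torus one-forms has trivial image from $\pi_2 = 0$), contradicting the classical Schoen--Yau/Gromov--Lawson PSC obstruction. Hence $\Sigma = \mathbb{S}^2$ in this case.

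Now suppose $\Sigma$ is non-compact. The key claim is $H_1(\Sigma;\mathbb{Z})=0$. Assume for contradiction a non-zero class $\alpha \in H_1(\Sigma;\mathbb{Z})$. Then Lemma \ref{Covering} applied in dimension 2 produces a cover $p:\tilde\Sigma\to \Sigma$ and a proper surjective smooth function $\rho:\tilde\Sigma\to \mathbb{R}$ with $\mathrm{Lip}(\rho)<1$. Pulling the warping functions $u_1,\dots,u_k$ back via $p$, the lifted warped metric on $\tilde\Sigma \times \mathbb{T}^k$ still has positive scalar curvature bounded below by $\sigma\circ p$. For any $D>0$, consider the compact Riemannian band $M_D:=\rho^{-1}([-D,D])$ with $\partial_\pm=\rho^{-1}(\pm D)$, and verify the hypotheses of Lemma \ref{TStableWidthEstimate}: $\mathrm{Lip}(\rho)<1$ is automatic; the function $\sigma \circ p$ is continuous and strictly positive, so bounded below by some constant $c>0$ on the compact set $\rho^{-1}([-1,1])$; and every separating closed 1-submanifold of $M_D$ is a finite disjoint union of circles, none of which admits a metric with positive $T^*$-stabilized scalar curvature by Remark \ref{no-circle-T-scal}. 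Consequently $D\le D_0(c)$, contradicting the surjectivity of $\rho$ once $D$ is chosen large enough.

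Therefore $H_1(\Sigma;\mathbb{Z})=0$, and because the fundamental group of a non-compact orientable surface is free, this forces $\pi_1(\Sigma)=0$ and so $\Sigma \cong \mathbb{R}^2$ topologically. The uniformization theorem then gives either conformal equivalence to $\mathbb{C}$ or to the Poincaré disc $\mathbb{D}$. To rule out the disc case, one extracts a Fischer--Colbrie--Schoen type stability from the warped-product identity: in the case $k=1$ one has $\mathrm{Sc}(g_{warp})=2K_\Sigma-2\Delta u_1/u_1$, so $u_1$ is a positive supersolution of $-\Delta_\Sigma+K_\Sigma$, forcing the operator $-\Delta+K$ to have positive principal eigenvalue on every compact subdomain, whence the classical Fischer--Colbrie argument shows $\Sigma$ is conformally parabolic, not hyperbolic. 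The main obstacle is to carry out this last step cleanly for general $k>1$: the additional cross terms in the multi-warped scalar curvature formula need to be absorbed (for instance by a careful choice of $w=\prod u_i^{\alpha_i}$) to retain the same Schrödinger-type supersolution that underlies the Fischer--Colbrie--Schoen conformal trichotomy.
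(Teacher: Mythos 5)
Your Step 1 (topological dichotomy) is essentially correct and takes a slightly different route from the paper. For the closed case you invoke the Schoen--Yau PSC obstruction for $\Sigma\times\mathbb{T}^k$ directly, while the paper just runs the same covering-plus-width-estimate argument that it uses for the open case to conclude $H_1(\Sigma)=0$ and hence $\Sigma=\mathbb{S}^2$. For the non-compact case your deduction of $\pi_1(\Sigma)=0$ from $H_1(\Sigma)=0$ using freeness of the fundamental group of an open orientable surface is cleaner than the paper's argument via asphericity, torsion-freeness and passing to a $\mathbb{Z}$-cover; both are fine.

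The genuine gap is in Step 2, and you flag it yourself: you only produce the Fischer--Colbrie--Schoen positivity of $-\Delta_\Sigma+K$ for $k=1$, and leave the case $k>1$ as an unresolved ``obstacle.'' This is precisely the nontrivial content of the paper's Step 2. There the authors set $u=\prod_{i=1}^k u_i$, expand $\mathrm{Sc}(g_{warp})$, and absorb the cross terms $\sum_{i<j}\langle\nabla\log u_i,\nabla\log u_j\rangle$ into $|\nabla\log u|^2$ via Cauchy--Schwarz, obtaining the pointwise inequality
\[
\Delta_\Sigma\log u<K-\beta_k\,|\nabla_\Sigma\log u|^2,\qquad \beta_k=\tfrac{k+1}{2k},
\]
which after multiplying by $\phi^2$ and integrating by parts gives the quadratic-form positivity $\int_\Sigma|\nabla\phi|^2+\beta_k K\phi^2>0$ for all $\phi\in C_c^\infty(\Sigma)$. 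The crucial point is that $\beta_k$ degrades to only $\tfrac12+\tfrac{1}{2k}>\tfrac12$ for large $k$, so one cannot simply quote the $a=1$ statement from the $k=1$ case; instead the paper plugs $\phi v$ (with $v=\mu^{-1/2}$ the conformal factor on $\mathbb{D}$) into the quadratic form, uses the curvature identity $K=\Delta v/v-|\nabla v|^2/v^2$, and runs a logarithmic-cutoff argument on balls $B_R$ to conclude $\int_{B_R}|\nabla v|^2\le CR^{-2}\to 0$, forcing $v$ constant and contradicting completeness. This closes the loop in a way that is uniform in $k$ as long as $\beta_k>\tfrac12$. Without this step your argument does not prove the stated conclusion beyond $k=1$.

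Two minor points. First, your closed-case argument relies on an external theorem (closed SYS manifolds admit no PSC), whereas the paper keeps the proof self-contained via Lemma \ref{TStableWidthEstimate}; both are valid, but you should at least observe that the compactness of $\Sigma\times\mathbb{T}^k$ is needed for that theorem to apply. Second, in the non-compact hyperbolic case, even positivity of $-\Delta+K$ with constant $1$ does not immediately yield ``parabolic, not hyperbolic'' without the completeness hypothesis being used somewhere; the paper's proof makes the use of completeness explicit (through the nonexistence of a bounded non-constant conformal factor on $\mathbb{D}$), and your sketch should too.
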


\begin{proof}
The proof is divided into two steps:

\vspace{2mm}

\noindent\textbf {Step 1:} $\Sigma$ is topologically a plane or a sphere.

\vspace{2mm}

From the uniformization theorem, it is sufficient to prove the simply-connectedness of $\Sigma$. 

We first note that $H_1(\Sigma) =0$. If not, according to Lemma \ref{Covering}, we can find a covering of $\Sigma$ with a map $\rho: \tilde \Sigma \to \mathbb R$ and $\Lip (\rho) < 1$. For simplicity of notation, we also denote this covering space by $\Sigma$. Both conditions in Lemma \ref{TStableWidthEstimate} hold for the map $\rho : \rho^{-1}([-D,+D]) \to [-D,+D]$ where $D$ is arbitrarily large. This contradicts with Lemma \ref{TStableWidthEstimate}.

We have two cases: (1) $\Sigma$ is closed; (2) $\Sigma$ is open. 
\begin{itemize}

\item[(1)] If $\Sigma$ is closed, then it is $\mathbb{S}^2$. 

\item[(2)] If $\Sigma$ is open, then $\Sigma$ is aspherical. By the Proposition 2.45 in \cite{2002Hat}, $\pi_1(\Sigma)$ is torsion-free. We have  that $\pi_1 (\Sigma)$ is trivial.  If not, we can find a covering space $\tilde \Sigma$ such that $H_1(\tilde \Sigma) \cong \mathbb Z$, and use the same argument as above to lead to a contradiction with Lemma \ref{TStableWidthEstimate}.

\end{itemize}

\noindent \textbf{ Step 2:} $\Sigma$ is conformally equivalent to a plane or a sphere.

\vspace{2mm}

It is sufficient to study the non-compact case. 
Suppose by contradiction that $\Sigma$ is not conformally equivalent to the plane but to the disk $\mathbb D$. Then we can write the metric $g_\Sigma$ as 
$$
g_\Sigma=\mu(z)|\mathrm dz|^2\mbox{ with }z\in\mathbb D.
$$
Choose  $v=\mu^{-1/2}$ and the Gaussian curvature $K$ of $(\Sigma,g_\Sigma)$ can be expressed as follows: 
\begin{equation}\label{Eq: curvature identity}
K=\frac{\Delta_{\Sigma} v}{ v}-\frac{|\nabla_{\Sigma}v|^2}{v^2}.
\end{equation}

In the following, we use the uniform positivity of $T^*$-stability scalar curvature to study the positivity of the operator $-\Delta_\Sigma+\beta_k K$ where $\beta_k=\frac{k+1}{2k}$ and then use it to get a contradiction. 

\vspace{2mm}

\noindent\textbf{Claim: } For  any non-zero $\phi\in C_c^\infty(\Sigma)$, one has that 
\begin{equation}\label{Eq: positive operator}
\int_\Sigma|\nabla_\Sigma\phi|^2+\beta_k K\phi^2\,\mathrm d\sigma_\Sigma>0.
\end{equation}
\emph{Proof of the claim: }Let
$
u=\prod_{i=1}^k u_i.
$ Then, we have   that 
$${\rm{Sc}}(g_{warp})=2K-2\sum_{i=1}^k\frac{\Delta_\Sigma u_i}{u_i}-2\sum_{1\leq i<j\leq k}\langle\nabla_\Sigma\log u_i,\nabla_\Sigma \log u_j\rangle>0$$ 
We observe that 
\[
\begin{split}
\Delta_\Sigma \log u&=\frac{\Delta_\Sigma u}{u}-|\nabla_\Sigma \log u|^2\\
&=\sum_{i=1}^k \frac{\Delta_\Sigma u_i}{u_i}+2\sum_{1\leq i<j\leq k}\langle\nabla_\Sigma\log u_i,\nabla_\Sigma \log u_j\rangle-|\nabla_\Sigma \log u|^2\\
&<K+\sum_{1\leq i<j\leq k}\langle\nabla_\Sigma\log u_i,\nabla_\Sigma \log u_j\rangle-|\nabla_\Sigma \log u|^2.
\end{split}
\]
Notice that for $k\geq 2$ we have
\[
\begin{split}
&|\nabla_\Sigma \log u|^2\\=&\sum_{i=1}^k|\nabla_\Sigma \log u_i|^2+2\sum_{1\leq i<j\leq k}\langle\nabla_\Sigma\log u_i,\nabla_\Sigma \log u_j\rangle\\
=&\sum_{1\leq i< j\leq k}\frac{1}{k-1}(|\nabla_\Sigma \log u_i|^2+|\nabla_\Sigma \log u_j|^2)+2\langle\nabla_\Sigma\log u_i,\nabla_\Sigma \log u_j\rangle\\
\geq &\frac{2k}{k-1}\sum_{1\leq i<j\leq k}\langle\nabla_\Sigma\log u_i,\nabla_\Sigma \log u_j\rangle.
\end{split}
\]
we combine them to get that 
$$
\Delta_{\Sigma}\log u<K-\frac{k+1}{2k}|\nabla_\Sigma \log u|^2=K-\beta_k |\nabla_\Sigma \log u|^2.
$$
Let $\phi$ be any non-zero function in $C_c^\infty(\Sigma)$. Multiplying $\phi^2$ from both sides and integrating by parts,  we see
$$
-\int_{\Sigma}2\phi\langle \nabla_\Sigma\phi,\nabla_\Sigma\log u\rangle\mathrm d\sigma_\Sigma<\int_\Sigma K\phi^2\mathrm d\sigma_\Sigma-\beta_k\int_\Sigma |\nabla_\Sigma\log u|^2\phi^2\,\mathrm d\sigma_\Sigma.
$$
The left-hand side is bounded below by 
$$
-\beta_k^{-1}\int_\Sigma|\nabla_\Sigma\phi|^2\,\mathrm d\sigma_\Sigma-\beta_k\int_\Sigma |\nabla_\Sigma\log u|^2\phi^2\,\mathrm d\sigma_\Sigma
$$
Then, we get \eqref{Eq: positive operator} and complete the proof of the claim.

\vspace{2mm}

We now use  \eqref{Eq: positive operator} to get a contradiction.  We have that 
$$
\int_\Sigma |\nabla_\Sigma(\phi v)|^2+\beta_k K\phi^2 v^2\mathrm d\sigma_\Sigma >0\mbox{ for any non-zero }\phi\in C_c^\infty(\Sigma).
$$
In addition, from \eqref{Eq: curvature identity}, we have that 
\[
\begin{split}
&\int_\Sigma |\nabla_\Sigma(\phi v)|^2=\int_{\Sigma}|\nabla_{\Sigma}\phi|^2v^2\mathrm d\sigma_\Sigma+2\int_{\Sigma}\phi v\langle\nabla_{\Sigma}\phi,\nabla_{\Sigma}v\rangle\mathrm d\sigma_\Sigma +\int_{\Sigma}\phi^2|\nabla_{\Sigma}v|^2\,\mathrm d\sigma_{\Sigma}\\
&\int_{\Sigma }\beta_k K\phi^2 v^2\mathrm = -2\beta_k\int_{\Sigma}\phi v\langle\nabla_{\Sigma}\phi,\nabla_{\Sigma}v\rangle\mathrm-2\beta_k\int_{\Sigma}\phi^2|\nabla_{\Sigma}v|^2\mathrm d\sigma_\Sigma.\\
\end{split}
\]
Using the inequality
\[
\begin{split}
2(1-\beta_k)\int_{\Sigma}&\phi v\langle\nabla_{\Sigma}\phi,\nabla_{\Sigma}v\rangle\mathrm d\sigma_\Sigma\\
&\leq \frac{2(1-\beta_k)^2}{2\beta_k-1}\int_\Sigma|\nabla_\Sigma \phi|^2v^2\mathrm d\sigma_\Sigma+\frac{2\beta_k-1}{2}\int_\Sigma\phi^2|\nabla_\Sigma v|^2\mathrm d\sigma_\Sigma
\end{split}
\]
we arrive at
\[
\int_\Sigma \phi^2|\nabla_\Sigma v|^2\mathrm d\sigma_\Sigma\leq  C(\beta_k)\int_{\Sigma}|\nabla_{\Sigma}\phi|^2v^2\mathrm d\sigma_\Sigma,
\]
where
$$
C(\beta_k)=\frac{2}{2\beta_k-1}\left(\frac{2(1-\beta_k)^2}{2\beta_k-1}+1\right).
$$
Fix a point $p$ in $\Sigma$. By taking $\phi$ to be a test function such that $0\leq \phi\leq 1$ in $\Sigma$, $\phi\equiv 1$ in $B_R(p)$, $\phi\equiv 0$ outside $B_{2R}(p)$ and $|\nabla_\Sigma \phi|\leq 10R^{-1}$, we obtain
$$
\int_{B_R(p)}|\nabla_\Sigma v|^2\mathrm d\sigma_\Sigma\leq 100\pi C(\beta_k)R^{-2}.
$$
Sending $R\to+\infty$ we conclude that $v$ turns out to be a constant function, which contradicts the fact that $g_\Sigma$ is a complete metric.
\end{proof}

\section{Proof of Theorem \ref{main}}

\begin{lemma}\label{FillRadius}
	Let $(\Sigma,\partial\Sigma,g_\Sigma)$ be a compact orientable surface whose $T^*$-stabilized scalar curvature is bounded below by a positive function $\sigma$. Let $\Omega \subset \Sigma $ be a compact connected domain, and let $s$ be a number such that
	\begin{enumerate}
		\item[(1)] $\Omega(s)$ does not meet $\partial \Sigma$.
		\item[(2)] $\mathrm{Image}[H_1(\Omega) \to H_1(\Omega(s))] \not\equiv 0$
	\end{enumerate}
	where $\Omega(s) := \{x\in \Sigma: \dist_{\Sigma}(x,\Omega) \leq s  \}$. Then 
	\begin{equation*}
		s \le D_0\left ( \inf_{\Omega(1)} \sigma \right  )
	\end{equation*}
	where $D_0$ is the function coming from Lemma \ref{TStableWidthEstimate}.
\end{lemma}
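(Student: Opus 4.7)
The plan is to argue by contradiction: assume $s > D_0(c)$ with $c := \inf_{\Omega(1)}\sigma$, and construct a compact $2$-dimensional Riemannian band of width $s - \epsilon$ inside a cyclic cover of $\Omega(s)$ to which Lemma \ref{TStableWidthEstimate} applies.

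First I would extract the relevant loop and cover. Hypothesis (2) supplies a loop $\gamma \subset \Omega$ whose class in $H_1(\Omega(s))$ is non-zero; since the compact surface $\Omega(s)$ deformation retracts onto a graph, this class is non-torsion. Apply Lemma \ref{pd1} to the interior $\Omega(s)^\circ$: taking the compact set $K$ in the proof to be $\Omega$ produces a smooth map $f: \Omega(s)^\circ \to \mathbb{S}^1$ with regular level set $\gamma' = f^{-1}(\alpha_0) \subset \Omega$ representing $[\gamma]$. Feeding this into Lemma \ref{Covering} yields a cyclic cover $\pi: \widetilde{\Omega(s)^\circ} \to \Omega(s)^\circ$ together with a proper, surjective, $1$-Lipschitz map $\rho: \widetilde{\Omega(s)^\circ} \to \mathbb{R}$ whose zero level set is a specific closed lift $\tilde\gamma'$ of $\gamma'$.

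Next I would carve out the band. Fix a small $\epsilon > 0$ and let $N$ be the connected component of $\rho^{-1}([-(s-\epsilon), s-\epsilon])$ containing $\tilde\gamma'$. Because $\gamma' \subset \Omega$, the distance in $\Sigma$ from $\gamma'$ to $\partial \Omega(s)$ is at least $s$, and this inequality lifts to the cover; combined with $\Lip(\rho) < 1$, it forces $N$ to be compact and disjoint from $\pi^{-1}(\partial \Omega(s))$. Together with surjectivity of $\rho$ and the two-sided unboundedness of the cover produced by Lemma \ref{Covering}, this makes $\partial_\pm := N \cap \rho^{-1}(\pm(s-\epsilon))$ both non-empty, so $(N, \partial_-, \partial_+)$ is a genuine $2$-dimensional Riemannian band. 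The $T^*$-stabilized PSC of $\Sigma$ lifts to $N$ via the warped product, and the set $\rho^{-1}([-1, 1])$ lies within distance $1$ of $\tilde\gamma'$ by the Lipschitz bound, hence projects into $\Omega(1)$, where the scalar curvature is bounded below by $c$. Any closed hypersurface in a $2$-dimensional band separating $\partial_-$ from $\partial_+$ is a disjoint union of circles, which by Remark \ref{no-circle-T-scal} admits no $T^*$-stabilized PSC metric. Lemma \ref{TStableWidthEstimate} then yields $s - \epsilon \le D_0(c)$, and letting $\epsilon \to 0$ contradicts $s > D_0(c)$.

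The main obstacle is the structural check that $N$ is a compact band with both $\partial_\pm$ non-empty: compactness and disjointness from $\pi^{-1}(\partial\Omega(s))$ follow from the distance estimate $\dist_\Sigma(\gamma', \partial\Omega(s)) \ge s$ together with $\Lip(\rho) < 1$, while non-emptiness of both $\partial_\pm$ uses surjectivity of $\rho$ and the fact that $\tilde\gamma'$ separates $\widetilde{\Omega(s)^\circ}$ into two non-compact halves in the cover produced by Lemma \ref{Covering}. Some care is also required to apply Lemmas \ref{pd1} and \ref{Covering}, which are stated for manifolds without boundary, to the open manifold $\Omega(s)^\circ$; this is unproblematic since $\Omega(s)^\circ$ is itself an open oriented $2$-manifold.
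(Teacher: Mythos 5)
Your strategy matches the paper's: derive a contradiction from Lemma \ref{TStableWidthEstimate} using a loop in $\Omega$, a signed distance function, and (if needed) a cyclic cover. The only structural difference is that you always pass to the $\mathbb{Z}$-cover of $\Omega(s)^\circ$, whereas the paper works in $\Omega(s)$ directly when $\gamma$ already separates the boundary components and only invokes the cover otherwise; both routes are fine, and your band-carving in the cover is a reasonable way to make the paper's parenthetical remark precise.

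There is, however, a recurring error in justification. You repeatedly cite ``the Lipschitz bound'' $\Lip(\rho)<1$ for a conclusion it does not give. Since $\rho\equiv 0$ on $\tilde\gamma'$, the bound $\Lip(\rho)<1$ only yields $|\rho(x)|<\dist(x,\tilde\gamma')$, an \emph{upper} bound on $|\rho|$ in terms of distance. Every key step in your argument needs the \emph{reverse} implication: that small $|\rho(x)|$ forces $x$ to lie near $\tilde\gamma'$, so that $\rho^{-1}([-1,1])$ projects into $\Omega(1)$ (to get the curvature lower bound $c$ on the right set) and so that the component $N$ of $\rho^{-1}([-(s-\epsilon),s-\epsilon])$ stays away from $\pi^{-1}(\partial\Omega(s))$ and is compact. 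A $1$-Lipschitz function can vanish identically on a large neighborhood of $\tilde\gamma'$, so the implication as stated simply fails. What actually delivers the needed lower bound $\dist(x,\tilde\gamma')\lesssim |\rho(x)|$ is that $\rho$ is a (smoothed, slightly rescaled) signed distance to $\tilde\gamma'$ — that is the content of the construction in Lemma \ref{Covering}, and it is what the paper's own proof uses when it writes $\rho:(\Omega(s),\partial^+,\partial^-)\to([-D_0-\epsilon_1,D_0+\epsilon_2],\mp)$. The fix is immediate: replace ``by the Lipschitz bound'' with ``because $\rho$ is a rescaled signed distance to $\tilde\gamma'$,'' accept that $\rho^{-1}([-1,1])$ sits in $\Omega\bigl(1/(1-\delta)\bigr)$ rather than $\Omega(1)$, and let the rescaling parameter $\delta\to 0$ at the end. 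The conclusion is unaffected, but the step as you wrote it is logically reversed and should be corrected.
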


\begin{proof} Suppose the contrary that $s> D_0$.  Then, we find a curve $\gamma$ in $\Omega$ which is non-trivial  in $H_1(\Omega(s))$. We may assume $\gamma$ separates two boundaries, $\partial^+ \Omega(s)$ and $\partial^-\Omega(s)$, of the domain $\Omega(s)$. (If not, we use the approach in Lemma \ref{Covering} to find a required covering space and then to replace $\Omega_s$. ) 

Let $\rho: \Omega(s) \to \mathbb R$ be a smoothing of the signed distance function to $\gamma$. By rescaling, we can assume $\Lip (\rho) < 1$ and $\rho: (\Omega(s), \partial^+ , \partial^{-})\rightarrow ([-D_0-\epsilon_1, D_0+\epsilon_2], -D_0-\epsilon_1, D_0+\epsilon_2)$, where $\epsilon_i$ is some positive constant. We combine  Lemma \ref{TStableWidthEstimate} with Remark \ref{no-circle-T-scal} to have that 
 \[D_0+\frac{\epsilon_1+\epsilon_2}{2}\leq D_0\]which is a contradiction. \end{proof}

\begin{proposition}\label{Prop: quantitive topology}
For any function $L:[0,+\infty)\to (0,+\infty)$ and a constant $s_0>0$, there is  a positive constant $T_0=T_0(L,s_0)>s_0$ such that if $(\Sigma,\partial\Sigma,g_\Sigma)$ is a compact oriented connected surface with  boundary such that  \begin{itemize}
\item there is  a smooth function
$\rho:(\Sigma,\partial \Sigma) \to([0,T],T)$ with $\Lip( \rho)<1$; 

\item $T>T_0$ and $T$ is a regular value of $\rho$;

\item The $T^*$-stabilized scalar curvature of $(\Sigma, g_{\Sigma})$ is bounded below by a positive function $\sigma$ with $$
\inf_{\rho^{-1}([0,s])}\sigma\geq L(s) \mbox{ for all }s\in [0,T],
$$\end{itemize}
then for any regular value $s\in (0, s_0]$ of $\rho$,  there are finitely many pairwise disjoint embedded disks $D^s_1,\ldots,D^s_l$ in $\rho^{-1}([0,T_0])$ such that
$$
\bigsqcup_{i=1}^l\partial D^s_i\subset  \rho^{-1}(s)\mbox{ and } \rho^{-1}([0,s])\subset \bigsqcup_{i=1}^l \bar D^s_i.
$$
\end{proposition}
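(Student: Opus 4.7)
My plan is to apply Lemma~\ref{FillRadius} to obtain homological vanishing on deep sublevel sets of $\rho$, then promote this to a topological disk enclosure via the covering-space construction of Lemma~\ref{Covering} combined with the width estimate of Lemma~\ref{TStableWidthEstimate}.

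Without loss of generality assume $L$ is non-increasing (replacing $L$ by its lower envelope $\tilde L(s):=\inf_{[0,s]}L$ if needed). Let $D:=D_0(L(s_0+1))$ and $D':=D_0(L(s_0+D+2))$, where $D_0(\cdot)$ is the width constant from Lemma~\ref{TStableWidthEstimate}; both are finite positive and $D'\ge D$ since $D_0$ is decreasing in its argument. Set $T_0:=s_0+D+D'+2$, which depends only on $L$ and $s_0$. For a regular $s\in(0,s_0]$ and any connected component $\Omega$ of $\rho^{-1}([0,s])$, the hypothesis $\Lip\rho<1$ gives $\Omega(D+1)\subset\rho^{-1}([0,s_0+D+1])\subset\rho^{-1}([0,T_0-1])$, disjoint from $\partial\Sigma=\rho^{-1}(T)$ since $T>T_0$. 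Since $\inf_{\Omega(1)}\sigma\ge L(s_0+1)$, Lemma~\ref{FillRadius} yields that the inclusion $\Omega\hookrightarrow\Omega(D+1)$ induces the zero map on $H_1$. Each boundary circle $\gamma_j$ of $\Omega$ is therefore null-homologous in the compact orientable surface $\Omega(D+1)$ and separates it into two compact subsurfaces; let $F_j$ denote the ``outer'' piece (on the $\rho>s$ side) with $\partial F_j=\gamma_j$.

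The crux is to show each $F_j$ is a topological disk and $\Omega$ itself has genus zero. Suppose not: then one finds a non-separating simple closed curve $\alpha\subset\rho^{-1}([0,s_0+D+1])$. If $[\alpha]\ne 0$ in $H_1(\Sigma)$, Lemma~\ref{Covering} produces a cover $\tilde\Sigma\to\Sigma$ with a $\Lip<1$ signed distance function in which $\tilde\alpha$ is a separating curve. The distance from $\alpha$ to $\partial\Sigma$ in $\Sigma$ is at least $T-s_0-D-1>D'$, while the $T^*$-stabilized scalar curvature on a $1$-neighborhood of $\alpha$ (and $\tilde\alpha$) is bounded below by $L(s_0+D+2)$. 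Lemma~\ref{TStableWidthEstimate} then forces width $\le D_0(L(s_0+D+2))=D'$, contradicting the distance lower bound. The case $[\alpha]=0$ in $H_1(\Sigma)$ is handled by a direct second application of Lemma~\ref{FillRadius} to a tubular neighborhood $N(\alpha)$ with enlargement $D'+1$, combined with the same covering-space argument carried out inside $N(\alpha)(D'+1)$ instead of inside $\Sigma$. Hence $F_j$ is simply connected with connected boundary, i.e., a topological disk, and $\Omega$ has genus zero.

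Since $\Omega$ is planar and each $\gamma_j$ is capped by a disk $F_j$, the union $\hat\Omega:=\Omega\cup\bigsqcup_j F_j$ is an embedded $2$-sphere in $\Sigma$, contained in $\rho^{-1}([0,T_0])$. Picking an outermost cap $F_{j_0}$ yields a topological disk $D^s_\Omega:=\Omega\cup\bigsqcup_{j\ne j_0}F_j$ with $\partial D^s_\Omega=\gamma_{j_0}\subset\rho^{-1}(s)$ and $\Omega\subset D^s_\Omega\subset\rho^{-1}([0,T_0])$. Performing this construction for each of the finitely many components of $\rho^{-1}([0,s])$ (finite by regularity of $s$ and compactness of $\Sigma$) and retaining only the inclusion-maximal disks yields the required pairwise disjoint family $\{D^s_i\}$ covering $\rho^{-1}([0,s])$. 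The main obstacle is the topological upgrade in the third paragraph: converting the null-homology conclusion of Lemma~\ref{FillRadius} into a genuine disk property for the caps $F_j$ requires a second application of the filling-lemma and covering-space/width-estimate machinery, with $T_0$ chosen large enough to fit two nested neighborhood expansions inside $\Sigma\setminus\partial\Sigma$.
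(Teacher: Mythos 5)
Your plan — Lemma~\ref{FillRadius} to kill $H_1$ on a neighborhood of a sublevel set, then a covering/width argument (Lemmas~\ref{Covering} and~\ref{TStableWidthEstimate}) to upgrade to contractibility, then an assembly of outermost disks — is in the same spirit as the paper's, and the parameter-count ($T_0$ built from two nested applications of $D_0$) is also of the right shape. However, there are two genuine gaps in the execution.

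First, your definition of the caps $F_j$ is inconsistent. When $\gamma_j$ is null-homologous in $\Omega(D+1)$, it separates $\Omega(D+1)$ into two pieces, exactly one of which has boundary equal to $\gamma_j$ alone (the other picks up $\partial\,\Omega(D+1)$). That cap need \emph{not} lie on the ``$\rho>s$ side'': for the outermost boundary circle of $\Omega$, the piece with boundary $\gamma_j$ typically is the one \emph{containing} $\Omega$, while the piece on the $\rho>s$ side has additional boundary coming from $\partial\,\Omega(D+1)$. You then write that $\hat\Omega=\Omega\cup\bigsqcup_j F_j$ is an embedded $2$-sphere. If all the $F_j$ were indeed disjoint from $\Omega$ and from each other, $\hat\Omega$ would be a closed surface embedded in the connected surface-with-boundary $\Sigma$, forcing $\hat\Omega=\Sigma$ — a contradiction with $\partial\Sigma\neq\emptyset$. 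So your intermediate conclusion is actually impossible, and the step ``pick an outermost cap $F_{j_0}$'' is an attempt to patch a structure that cannot exist as described. The paper avoids this by never insisting on outer caps: it shows each component of $\rho^{-1}(s)$ bounds \emph{some} disk in $\rho^{-1}([0,s_1^*])$ (Step~3), and then a partial-order/maximality argument (Step~4), together with a closed-subsurface connectivity argument, sorts out which disks to retain.

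Second, the dichotomy on $[\alpha]\in H_1(\Sigma)$ is spurious. A simple closed curve that is non-separating in a subsurface $F_j$ is non-separating in $\Sigma$ (cutting $\Sigma$ along $\alpha$ disconnects any subsurface containing a bicollar of $\alpha$), hence has nonzero class in $H_1(\Sigma;\mathbb{Z}/2)$ and therefore nonzero class in $H_1(\Sigma;\mathbb{Z})$. So the case ``$[\alpha]=0$ in $H_1(\Sigma)$'' never occurs, and the sketchy argument you give for it (a second application of Lemma~\ref{FillRadius} to $N(\alpha)$ ``combined with the same covering-space argument'') does not make sense and is unnecessary. More importantly, this signals a conceptual shortcut: the paper does not use Lemma~\ref{Covering} here at all. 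Instead, it first proves that every component of the larger sublevel set $\rho^{-1}([0,s_1^*])$ is planar (Step~2, via Lemma~\ref{FillRadius}), and then observes that in a planar surface a non-contractible curve $\gamma\subset\rho^{-1}(s)$ automatically separates $\partial[\rho^{-1}([0,s_1^*])]$ into two non-empty parts, giving directly a compact band to which Lemma~\ref{TStableWidthEstimate} applies with $\gamma$ deep inside. Working with sublevel sets of $\rho$ rather than metric neighborhoods $\Omega(D+1)$ also sidesteps the regularity concerns around $\Omega(D+1)$ being a surface with boundary.

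In short: the tools are the right ones, but you need to (a) prove planarity of the full sublevel set first and derive the separating band from planarity rather than from a cover, and (b) drop the insistence that each boundary circle has an ``outer'' disk cap, instead showing each bounds \emph{a} disk and then sorting the disks by inclusion as in the paper's Step~4.
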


\begin{figure}[tbp]
    \includegraphics[width=1.0\linewidth]{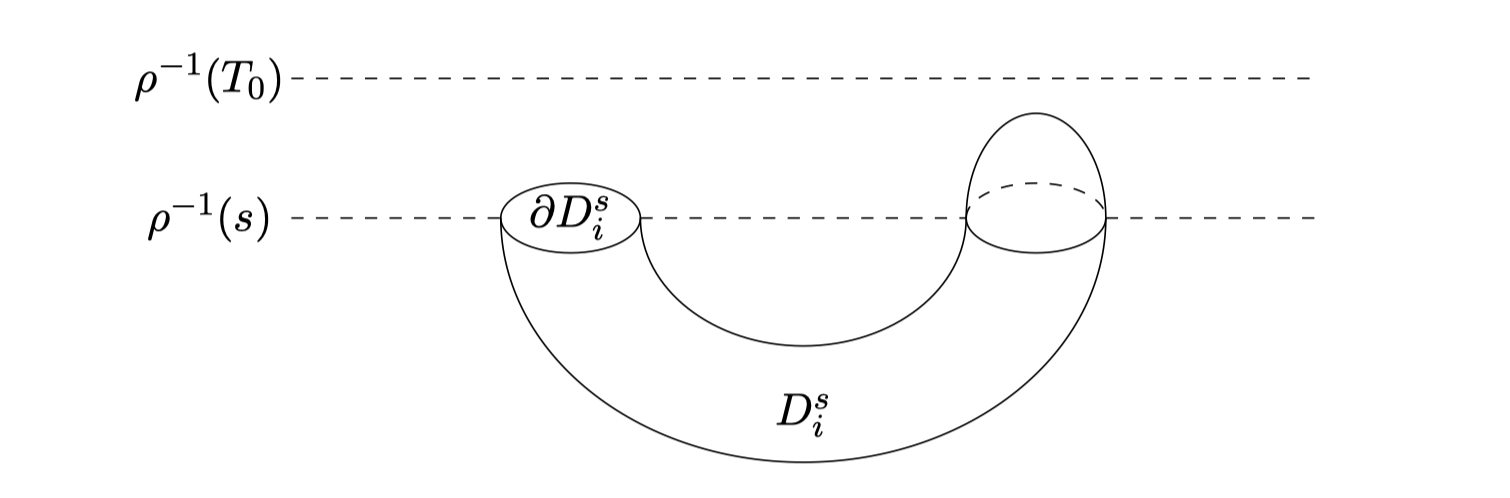}
    \caption{}
\end{figure}

\begin{proof} We first use Lemma \ref{TStableWidthEstimate} to find an expression of $T_0$ and then  show that it is the desired one.

\vspace{2mm}
\noindent \textbf{Step 1:} Find $T_0$ and set up the proof. 
\vspace{2mm}

We use the function $D_0(c)$  defined in Lemma \ref{TStableWidthEstimate} to introduce the following constants: 

\begin{itemize}
\item $C_1=D_0(L(s_0+2))$;
\item $s_1=s_0+1+C_1$;
\item $C_2= D_0(L(s_1+3))$;
\item $T_0=s_1+2+D_2$.
\end{itemize} 

In the following, we assume that  $(\Sigma,\partial\Sigma,g_\Sigma)$ $\sigma$ and $\rho$ are defined as in the assumption. For our convenience, we fix a regular value $s_1^*$ of with $s_1\leq s_1^*\leq s_1+1$.

\vspace{2mm}
\noindent \textbf{ Step 2}: Every component of $\rho^{-1}([0,s_1^*])$ is homeomorphic to a $2$-sphere with finitely many disks removed.
\vspace{2mm}

If not, there is a compact component $\Omega\subset \rho^{-1}([0,s_1^*])$  with $g(\Omega)>0$. Using long exact sequence for relative homology groups (see Page 115 in \cite{2002Hat}), we have that 
for any $C_2>D_0(s^*_1)$\[0<g(\Omega)\leq g(\Omega(C_2))\text{ and }\mathrm{Image} [ H_1(\Omega ) \to H_1(\Omega (C_2))] \neq 0, \]which contradicts lemma \ref{FillRadius}. 

\vspace{2mm}

\noindent\textbf{Step 3: }{For any $s\in (0, s_0]$, each component of $\rho^{-1}(s)$ bounds a disk in $\rho^{-1}([0,s_1^*])$.} 

\vspace{2mm}

It is sufficient to show that $\pi_1(\rho^{-1}(s))\rightarrow \pi_1(\rho^{-1}([0, s^{*}_1]))$ is trivial. 

\vspace{2mm}

Suppose the contrary that there is a closed curve $\gamma$ which is non-contractible in $\rho^{-1}([0, s_1^*])$. For our convenience, we may assume that $\rho^{-1}([0, s^*_1])$ is connected. From Step 2, it is a $2$-sphere with finitely many discs removed.  

We conclude that $\gamma$ separates two non-empty parts $ \partial^+$ and $\partial^-$ of the boundary $\partial [\rho^{-1}([0, s_1^*])]$. If not, $\gamma$ is bounded by a disc, which is a contradiction with the choice of $\gamma$. 

\vspace{2mm}

We now apply Lemma \ref{TStableWidthEstimate} to $(\rho^{-1}([0, s_1^*]), \partial^+, \partial^-)$ and  get a contradiction as follows. 

\vspace{2mm}

 Notice that we have
\begin{itemize}
\item $\gamma\subset \rho^{-1}(s)$,
\item $\partial [\rho^{-1}([0, s_1^*])]=\partial^+ \amalg \partial^- \subset \rho^{-1}(s_1^*)$
\item $\Lip(\rho)<1$
\item $\dist_g(\gamma,\partial^+)>C_1>D_0(L(s_0+2))$ and $\dist_g(\gamma,\partial^-)>C_1>D_0(L(s_0+2))$
\end{itemize}
Through a modification of the signed distance function of $\gamma$, we can construct a smooth function
$$
\tilde \rho:((\rho^{-1}([0, s_1^*]), \partial^+, \partial^-))\to ([-C_1,C_1], C_1, -C_1)
$$
with  $\Lip(\tilde\rho)<1$ and 
$
\tilde \rho^{-1}([-1,1])\subset \gamma(2)\subset \rho^{-1}([0,s_0+2]).
$
Moreover, the scalar curvature is bounded below by  $\sigma \geq L(s_0+2)$ in $\tilde \rho^{-1}([-1,1])$ and is non-negative in $\rho^{-1}([0, s_1^*])$. 

From Remark \ref{wrapped}, any circle has no metric with positive $T^*$-stabilized scalar curvature. We apply Lemma \ref{TStableWidthEstimate} to $(\rho^{-1}([0, s_1^*]), \partial^+, \partial^-) $and  have that  that $C_1<D_0(L_{s_0+2})$, which is a contradiction with the choice of $C_1$.

\vspace{2mm}

\noindent \textbf{Step 4:} Show that for $s\in (0, s_0]$, there are finitely many pairwise disjoint embedded disks $D^s_1,\ldots,D^s_l$ in $\Sigma$ such that
$$
\bigsqcup_{i=1}^l\partial D^s_i\subset \rho^{-1}(s)\mbox{ and } \rho^{-1}([0,s])\subset \bigsqcup_{i=1}^l \bar D^s_i.
$$

\vspace{3mm}

For any $s\in (0, s_0]$, assume that $\rho^{-1}(s)$ is a disjoint union of closed curves,$\{\gamma_i^s\}_{i=1}^m$. From Step 3, each $\gamma^s_i$ bounds a disc $D^s_{i}\subset \rho^{-1}([0, s_1^*])$. 

\vspace{2mm}

\noindent \textbf{Claim: }For any $i_1, i_2$, one of the following relations holds: (1) $D^s_{i_1}\cap D^s_{i_2}=\emptyset$; (2) $D^{s}_{i_1}\subset D^s_{i_2}$; (3) $D^s_{i_2}\subset D^s_{i_1}$.

\vspace{2mm}

\noindent\emph{Proof of the claim:} Suppose that $D^s_{i_1}\setminus D^s_{i_2}\neq \emptyset$ and $D^s_{i_2}\setminus D^s_{i_1}\neq \emptyset$. Then, we have that $\gamma^s_{i_1}\cap D^s_{i_2}\neq \emptyset$ and $\gamma^s_{i_1}\cap D^s_{i_2}\neq \emptyset$. 

Since $\gamma^s_{i_1}$ and $\gamma^s_{i_2}$ are disjoint, we get that $\gamma^s_{i_1}\subset D^s_{i_2}$ and  $\gamma^s_{i_2}\subset D^s_{i_1}$. Namely, the set $D^s_{i_1}\cup D^s_{i_2}\subset \Sigma$ is a $2$-sphere. It is an open and closed set, which is a contradiction with the fact that $\Sigma$ is connected and $\partial \Sigma \neq \emptyset$. 

\vspace{2mm}

The claim suggests  that $(\{D^s_i\}^m_{i=1}, \subset)$ is a partially ordered set. Consider the collection $\{D^s_{i_k}\}^l_{k=1}$ of maximal elements. Then, $\{D^s_{i_k}\}^l_{k=1}$ are disjoint and $\amalg_{k=1}^l D^s_{i_k}=\cup_{i=1}^m D^s_i$. 

\vspace{2mm}

 It remains to show that $\rho^{-1}([0, s])\subset \amalg_{k=1}^l D^s_{i_k}$. 
 
 Suppose by contradiction that there is a component $\mathcal C\subset \rho^{-1}([0, s])$ with ${\rm{Int}} (\mathcal C)\cap (\amalg_{k=1}^l D^s_{i_k})=\emptyset$. The boundary $\partial \mathcal{C}=\cup_{i\in I}\gamma^{s}_{i}$ is a subset of $\cup_{i=1}^m \gamma^s_i$.

 \vspace{1mm}

 We have that for $i, i'\in I$, $D^s_i\cap D^s_{i'}=\emptyset$. If not, use the claim that $D^s_i\subset D^s_{i'}$ or $D^s_{i'}\subset D^s_{i}$. Without the loss of generalization, we assume that $D^s_{i'}\subset D^s_{i}$ (in fact $\bar D_{i'}^s \subset D_i^s$ as $\partial D_{i'}^s \cap \partial D_i^s = \emptyset $).  Since $\rm{Int} (\mathcal C)\cap (\amalg_{k=1}^l D^s_{i_k})=\emptyset$, we get that $\dist (D_{i'}^s, \mathcal{C})>0$, which is a contradiction with $\gamma^s_{i'}\subset \partial \mathcal{C}$. 
 
 \vspace{1mm}
 
 Consider a closed 2-submanifold as follows: 
 \[\mathcal{C}\bigcup_{i\in I} (\cup_{\gamma^s_i} D^s_i)\subset \Sigma\]
 It is a proper non-empty subset that is both open and closed. Since it is a contradiction with the connectedness of $\Sigma$. We complete the proof. 
\end{proof}

Now we are ready to prove Theorem \ref{main}.

\begin{proof}[Proof of Theorem \ref{main}]

Denote 
$$
\tau:=[M]\frown(\beta_1\smile\ldots\smile \beta_{n-2})\in H_2(M, \partial_{s} M),
$$
where $\beta_1 \in H^1(M,\partial_b M)$ and $\beta_2, \ldots, \beta_{n-2} \in H^1(M)$.

\vspace{2mm}

Suppose by contradiction that $(M, g)$ is a complete manifold with positive scalar curvature. It is sufficient to show that $\tau$ is spherical. 

\vspace{2mm}

The proof is divided into two cases.

\textbf{ Case A :} If $\partial_b M = \emptyset$, $\beta_1 \in H^1(M)$ and  $ \tau \in H_2(M, \partial_{\infty} M)$.

Fixing a compact subset $\Omega$ of $M$ and modifying certain distance function, we can construct the following terms
\begin{itemize}
\item $\rho:M\to [0,+\infty)$ is a function with $\rho|_{\Omega}\equiv 0$ and $\Lip(\rho)< 1$.
\item  $\Omega_s:=\rho^{-1}([0,s])$ { for any } $s\geq 0$. 
\item $L(s):=\min_{\Omega_s}R(g)$.
\end{itemize}

Notice that for any $s>0$
\[\tau|_{\Omega_s} \text{ is aspherical and non-zero in } H_2 (M, M\setminus \Omega_s). \]
Where we say a relative class is aspherical if it is not in the image of the Hurewicz map.

\vspace{2mm}

In the following, we  fix a positive constant $s_0$ and  $T>T_0=T_0(L,s_0)$, where $T_0(L, s_0)$ is the constant coming from Proposition \ref{Prop: quantitive topology}. Then, $\tau|_{\Omega_T} ${ is aspherical and non-zero in } $H_2 (M, M\setminus \Omega_T)$, which will lead to a contradiction with the following claim. 

\vspace{2mm}

\noindent {\textbf{Claim: }} If $Sc_M>0$, then $\tau|_{\Omega_T}$ is spherical.

\vspace{2mm}

Since $\tau|_{\Omega_T}$ is non-zero, we can construct a stable weighted slicing with free boundary 
$$
(\Sigma_2,\partial\Sigma_2,w_2)\to (\Sigma_3,\partial\Sigma_3,w_3)\to\ldots \to(\Sigma_n,\partial\Sigma_n,w_n)=(\Omega_T,\partial \Omega_T,1),
$$
where 
\begin{itemize}
\item each $(\Sigma_j,\partial \Sigma_j,w_j)$ is a compact manifold $(\Sigma_j,\partial\Sigma_j)$ with boundary  associated with a  positive smooth function $w_j:\Sigma_j\to\mathbb R$ such that
$$
[\Sigma_j,\partial\Sigma_j]=[\Omega_T,\partial \Omega_T]\frown(e^*\beta_1\smile\ldots\smile e^*\beta_{n-j})
$$
where $e : \Omega_T \hookrightarrow M$ is the inclusion map.
\item for each $3\leq j\leq n$, $\Sigma_{j-1}$ is an embedded two-sided hypersurface with free boundary in $\Sigma_{j}$ with integer multiplicity, which is a stable critical point of the $w_j$-weighted area given by
$$\mathcal A_{j}(\Sigma)=\int_{\Sigma} w_j \,\mathrm d\mathcal H^{j-1}_g;$$
\item for each $3\leq j\leq n$, the quotient function
$$q_{j-1}:=\frac{w_{j-1}}{w_j|_{\Sigma_{j-1}}}$$ is a first eigenfunction of the stability operator on $\Sigma_{j-1}$ with the Neumann boundary condition associated with the $w_j$-weighted area.
\end{itemize}

\vspace{2mm}
\noindent \textbf{Step 1:} The existence of stable weighted slicings with free boundary.
\vspace{2mm}

The main method is essentially contained in Theorem 4.6 in \cite{schoen2017positive}. We will use the inductive construction.

Suppose that  the first $k$ slicings have been constructed. That's to say, 
$$
 (\Sigma_{n-k+1},\partial\Sigma_{n-k+1},w_{n-k+1})\to\ldots \to(\Sigma_n,\partial\Sigma_n,w_n)=(\Omega_T,\partial \Omega_T,1).
$$
where   the weight function $w_{n-k+1}$ is defined as above.

\vspace{2mm}
There is  a smooth hypersurface $(\Sigma^*_{n-k}, \partial \Sigma^*_{n-k}) \subseteq (\Sigma_{n-k+1}, \partial \Sigma_{n-k+1})$ which belongs to the  Poincaré dual of $i^* ( \beta_{n-k})$, i.e.
\begin{equation*}
	[\Sigma^*_{n-k}, \partial \Sigma^*_{n-k}] = [\Sigma_{n-k+1}, \partial \Sigma_{n-k+1}] \frown i^* (\beta_{n-k}).
\end{equation*}where $i: \Sigma_{n-k+1}\rightarrow M$ is the embedding.  Notice that $\Sigma^*$ is a smooth hyperspace, instead of minimal hypersurface. 

\vspace{2mm}

Consider  the collection $\mathcal S_{n-k}$   of $(n-k)$-currents $T$ with the following properties: 
\begin{itemize}
\item $T$ is an $(n-k)$-integral rectifiable current in $\Sigma_{n-k+1}$;
\item For some integral current $U$ and $R$, one has that  \[T-\Sigma^*_{n-k}=\partial S-R,\] where $\mathrm{spt}(S) \subset  \Sigma_{n-k+1} \mbox{ and } \mathrm{spt }(R) \subset \partial \Sigma_{n-k+1} $  
\end{itemize}
consider the variation problem
\begin{equation*}
	I_{n-k} = \inf \{ \mathbb M_{w_{n-k+1}, n-k} (T) : T \in \mathcal S_{n-k} \}
\end{equation*}
where $\mathbb M_{w_{n-k+1}, n-k}$ is the $w_{n-k+1}$-weighted mass functional on $(n-k)$-integer rectifiable currents. 

The standard theory of integral currents \cite{simon2014introduction} allows us to find a minimizer $\Sigma_{n-k}\in \mathcal{S}_{n-k}$ with $I_{n-k} =  \mathbb M_{w_{n-k+1}, n-k} (\Sigma_{n-k}) $. The regularity results for minimizing currents with free boundary (see  \cite{gruter1987optimal, gruter1987regularity, li2017minmax}) show that $\Sigma_{n-k}$ is a smooth, two-side and embedded hypersurface with  $[\Sigma_{n-k}, \partial \Sigma_{n-k}]=[\Omega_T,\partial \Omega_T]\frown(e^*\beta_1\smile\ldots\smile e^*\beta_{n-k})$. 

\vspace{3mm}

Furthermore,  $\Sigma_{n-k}$ satisfies a weighted stability inequality for the weighted function $w_{n-k+1}$.  Therefore we use the stability inequality to find a positive first eigenfunction $q_{n-k}$ of the weighted stability operator. Defining the weight $w_{n-k}$ by the formula $w_{n-k}= q_{n-k}\cdot w_{n-k+1}|_{\Sigma_{n-k}}$ completes the induction step.

\vspace{2mm}
\noindent \textbf{Step 2:} Analyze the topological structure of $\Sigma_2$.
\vspace{2mm}

Using the stability of all $\Sigma_j$ and the warping trick consecutively (The detail can be found in \cite[Section 2.4]{Gr2023}), we have the inequality
$$
{\rm{Sc}}\left(g_{\Sigma_2}+\sum_{j=2}^{n-1}q_j^2\mathrm d\theta_j^2\right)\geq {\rm{Sc}}(g)|_{\Sigma_2}>0.
$$

Now we analyze each component $\mathcal C$ of $\Sigma_2$. In the following, we assume that $\mathcal C$ has a boundary and it touches $\Omega$.(If $\mathcal C$ is closed (and so it is a $2$-sphere by Corollary \ref{CloseCase}) or $\mathcal C$ has boundary without touching $\Omega$, then $[\mathcal C]|_{\Omega}$ is automatically spherical.) 

Consider the surface $(\Sigma_2,\partial\Sigma_2)$ associated with the smooth function $\rho|_{\Sigma_2}$. Let $s$ be a common regular value of $\rho$ and $\rho|_{\Sigma_2}$ with $s<s_0$. It follows from Proposition \ref{Prop: quantitive topology} that there are finitely many pairwise disjoint embedded disks $D_1,\ldots,D_l$ in $\Sigma_2$ such that
$$
\bigsqcup_{i=1}^l \partial D_i\subset \mathcal C\cap\partial \Omega_s\mbox{ and }\mathcal C\cap \Omega_s\subset \bigsqcup_{i=1}^l D_i.
$$
In particular, $[\mathcal C]|_{\Omega_s}$ is spherical and so is $[\mathcal C]|_{\Omega}$.

\textbf{ Case B}: If $\partial_b M \neq \emptyset$,  the only difference from the previous proof is that we find the first slicing $\Sigma_{n-1}$ via the $\mu$-bubble technique. Since 
	\begin{equation*}
		H^{1}(M,\partial_b M) = \varinjlim_{Z\subset M} H^1(M, M \setminus Z )
	\end{equation*}
where $Z$ takes all closed subsets in $M$ satisfing $Z \Delta \partial_s M  \Subset M$. As in lemma \ref{pd1} , choosing a large enough closed subset $Z$, we can find a class $\tilde \beta_1 \in H^1(M, M \setminus Z)$ represents the class $\beta_1 \in H^1(M,\partial_b M)$ and a map $f : (M, M\setminus Z) \to (\mathbb S^1, *)$. Moreover the induced map 
\begin{equation*}
	f^* : H^1(\mathbb S^1,*) \to H^1(M, M-Z)  \to H^1(M, \partial_b M)
\end{equation*}
satisfies $f^*(d \theta) = \beta_1$, where $d\theta$ is the generator of $H^1(\mathbb S^1,*)$

Consider the covering pull-back
\begin{center}
	\begin{tikzcd}
		\tilde M\arrow{r}{\tilde{f}}\arrow{d}{p_1}	&\mathbb{R} \arrow{d}{p_2}\\
		M\arrow{r}{f} &\mathbb S^1
	\end{tikzcd}
\end{center}
where $\tilde M$ is the connected component containing $\tilde \Sigma = \tilde f^{-1}(0)$. Moreover, $(p_1)_*([\tilde \Sigma]) = D_M(\beta_1) \in H_{n-1}(M,\partial_s M)$. Lifting to the covering $\tilde M$, we can assume $\partial_b M = \partial_- \cup \partial_+ $ consist of two ends and there is a hypersurface $\Sigma$ with $[\Sigma] = [M] \frown \beta_1$ separating $\partial_-$ and $\partial_+$. Therefore for any closed subset $\Omega$ with $\Omega \Delta \partial_b M$ is compact, we can construct a stable $\mu$-bubble $(\Sigma_{n-1}, \partial \Sigma_{n-1})$ such that
\begin{equation*}
	[\Sigma_{n-1}, \partial \Sigma_{n-1}] = [\Omega_T, \partial \Omega_T] \frown e^* \beta_1.
\end{equation*}
As we get the first slicing, the rest is exactly the same as before.
\end{proof}

\section{Examples of open SYS manifolds}\label{Sec: examples}

In this section, we will give some examples of open SYS manifolds. Since we are dealing with non-compact manifolds, let us
define what it means for a quasi-proper map between non-compact manifolds
to have non-zero degree.

\begin{definition}{\rm (Cf. \cite[Definition 1.6]{chen2023})} \label{quaiproper}
		Let $M^{n}$ and $N^{n}$ be orientable $n$-manifolds (possibly non-compact). A quasi-proper map $f:M\to N$ is said to have degree n if
	\begin{itemize}\setlength{\itemsep}{1mm}
	
	\item $S_\infty$ consists of discrete points, where
 $$S_\infty=\bigcap_{K\subset M \text{ compact}} \overline{f(M-K)};$$
	\item the composed map
	$$H_n(M, \partial_{\infty})\xrightarrow{i_*} H_n(f^{-1}(N-S_\infty), \partial_{\infty})\xrightarrow{f_*} H_n(N-S_\infty,\partial_{\infty})$$
	maps the fundamental class $[M]$ to $n[N -S_{\infty}]$. In other words,
	\begin{equation*}
		f_*([M]) = n [N-S_{\infty}].
	\end{equation*}
	\end{itemize}
\end{definition}

\begin{proposition}
If $M$ is a domination of an open SYS manifold, then $M$ is also SYS, where we say $M$ is a domination of $\underline{M}$ if there exists a quasi-proper map $f: M \to \underline M$ with $\deg f =\pm 1$. 
\end{proposition}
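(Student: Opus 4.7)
The plan is to pull back the SYS data from $\underline{M}$ via $f$ and show that a sphericity of the resulting class on $M$ would descend via $f_*$ to produce a sphericity on $\underline M$, contradicting its SYS property.

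First, I would set up the ends-decomposition on $M$. Let $(\underline V, \underline \beta_1, \ldots, \underline \beta_{n-2})$ be the SYS data on $\underline M$, with $\underline \beta_1 \in H^1(\underline M, \partial_b \underline M)$ and $\underline \beta_i \in H^1(\underline M)$ for $i \geq 2$. Because $\partial \underline V$ is compact, it is disjoint from some compact set $K$ containing all the points of $S_\infty$ in its interior. Since $f$ is proper over $\underline M \setminus S_\infty$, after perturbing $\underline V$ we may assume $\partial \underline V \cap S_\infty = \emptyset$ and set $V := f^{-1}(\underline V)$; then $\partial V = f^{-1}(\partial \underline V)$ is compact, so $V \in \mathcal{B}(M)$. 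This induces an ends-decomposition $\partial_\infty M = \partial_s M \sqcup \partial_b M$ compatible with $\partial_\infty \underline M = \partial_s \underline M \sqcup \partial_b \underline M$ under $f$ at infinity. Define $\beta_1 := f^*\underline \beta_1 \in H^1(M, \partial_b M)$ and $\beta_i := f^* \underline \beta_i \in H^1(M)$ for $i \geq 2$.

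Next, I would use the naturality of the cap product together with $\deg f = \pm 1$. Up to the discrete set $S_\infty$ (which, being $0$-dimensional in an $n$-manifold with $n \geq 3$, does not alter top-dimensional relative homology groups at infinity), the degree condition in Definition \ref{quaiproper} gives $f_*[M] = \pm[\underline M]$, so that
\begin{equation*}
    f_*\bigl([M] \frown (\beta_1 \smile \cdots \smile \beta_{n-2})\bigr) = \pm [\underline M] \frown (\underline \beta_1 \smile \cdots \smile \underline \beta_{n-2}) =: \underline \tau \in H_2(\underline M, \partial_s \underline M).
\end{equation*}
Denote by $\tau \in H_2(M, \partial_s M)$ the class on the left-hand side. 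By hypothesis $\underline \tau$ is aspherical; I claim $\tau$ must be as well.

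Suppose for contradiction that $\tau$ is spherical. Given any closed subset $\underline \Omega \subset \underline M$ with $\overline{\underline \Omega \Delta \underline V^c}$ compact, set $\Omega := f^{-1}(\underline \Omega)$. Since $f$ is proper away from $S_\infty$ and $\underline \Omega \Delta \underline V^c$ has compact closure away from infinity, $\overline{\Omega \Delta V^c}$ is compact in $M$, so $\Omega$ is an admissible choice in Definition \ref{spherical}. By sphericity there exists $\alpha \in \pi_2(M, M \setminus \Omega)$ with Hurewicz image $\tau|_\Omega$. Pushing forward by $f$ and using naturality of both the restriction map $H_2(M,\partial_s M)\to H_2(M,M\setminus \Omega)$ and the Hurewicz map, we get that $\underline \tau|_{\underline \Omega} = f_*(\tau)|_{\underline \Omega} = f_*(\tau|_\Omega)$ equals the Hurewicz image of $f_*(\alpha) \in \pi_2(\underline M, \underline M \setminus \underline \Omega)$. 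This would make $\underline \tau$ spherical, contradicting the SYS property of $\underline M$. Hence $\tau$ is aspherical and $M$ is open SYS.

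The main obstacle is to rigorously justify the compatibility statements at infinity: namely, that $f_*[M] = \pm[\underline M]$ in $H_n(\underline M, \partial_\infty \underline M)$ (handling the correction coming from $S_\infty$), that $f^*\underline \beta_1 \in H^1(M, \partial_b M)$ is well-defined through the inverse/direct limit construction defining cohomology at infinity, and that the squares relating $f_*$, restriction to $\Omega$, and the Hurewicz map all commute. Once these naturality diagrams are in place, the argument reduces to a straightforward diagram chase driven by $\deg f = \pm 1$.
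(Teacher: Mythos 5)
Your proposal is correct and follows essentially the same approach as the paper: pull back the SYS data $(\underline V, \underline\beta_1,\ldots,\underline\beta_{n-2})$ via $f$ after perturbing $\underline V$ off $S_\infty$, then argue by contradiction that sphericity of $\tau$ would push forward along the degree-$\pm 1$ map to give sphericity of $\underline\tau$. The only minor difference is one of bookkeeping: you start from an arbitrary admissible $\underline\Omega \subset \underline M$ and pull back to $\Omega = f^{-1}(\underline\Omega)$, whereas the paper pushes forward an arbitrary $\Omega\subset M$ to $f(\Omega)$; your direction more directly verifies sphericity of $\underline\tau$ for every test set $\underline\Omega$, but both amount to the same diagram chase driven by naturality of the Hurewicz map and $\deg f = \pm 1$.
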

\begin{proof}
{ Definition \ref{quaiproper} allows us to  find a set  $S_{\infty} = \{\underline p_i\in \underline M ~|~i\in I\}$ of discrete points satisfying }for any proper map $\phi:\mathbb R^+\to \underline M$ the composed map $f\circ \phi$ is either proper or converges to some point $\underline p_i\in S_\infty$ as $t\to+\infty$. By definition of SYS, there exists $\underline V \in \mathcal B(\underline M)$ such that we can find $\underline \beta_1\in H^1(\underline M, \partial_\infty \underline V^c)$ and $\underline \beta_2,\ldots, \underline\beta_{n-2}$ in $H^1(\underline M)$ such that the class
$$
\underline \tau:=[\underline M]\smile(\underline\beta_1\frown\underline \beta_2\frown\cdots\frown\underline \beta_{n-2})\in H_2(\underline M,\partial_\infty \underline V)
$$
is aspherical. Perturb the boundary of $\underline V$, we can assume $\underline p_i \notin  \partial \underline V$. Now we can take $V:=f^{-1}(\underline V)$ which appears to be an element in $\mathcal B(M)$. Since $\underline \beta_1$ is a cohomology class of dimension $1$, its restriction near points $\underline p_i$ is zero. Through pull-back we have
$$
\beta_1:=f^*(\underline \beta_1)\in H^1(M,\partial_\infty V^c)
$$
and
$$
\beta_i:=f^*(\underline \beta_i)\in H^1(M)\mbox{ when }i\neq 1.
$$
We claim that the class
$$
\tau:=[M]\smile(\beta_1\frown\beta_2\frown \cdots\frown \beta_{n-2})\in H_2(M,\partial_\infty V)
$$
is aspherical. Otherwise, there is a subset $\Omega$ of $M$ such that $\overline{\Omega \Delta V}$ is compact and that the class $\tau|_{\Omega}$ is in the image of the Hurewicz map $\pi_2(M,M\setminus \Omega)\to H_{2}(M,M\setminus \Omega)$. Since $f$ has degree $\pm 1$, by push-forward we obtain
$$
\underline \tau|_{f(\Omega)}=\pm f_*(\tau|_{\Omega})
$$
and in particular $\underline \tau|_{f(\Omega)}$ lies in the image of the composed map 
$$\pi_2(\underline M,f( M\setminus \Omega))\to \pi_2(\underline M,\underline M\setminus f(\Omega))\to H_2(\underline M,\underline M\setminus f(\Omega)).$$
This means that $\underline\tau $ is aspherical, which leads to a contradiction.
\end{proof}

\begin{proposition}[i.e. Example \ref{Expl2}]
If $M$ is a closed SYS manifold, then $M \times \mathbb R$ is an open SYS manifold.
\end{proposition}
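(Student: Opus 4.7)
The plan is to exhibit an ends-decomposition and a sequence of cohomology classes on $M \times \mathbb{R}$ that reduce, after cap product, to the SYS data already present on $M$. The natural choice is $V = \emptyset \in \mathcal{B}(M \times \mathbb{R})$, which induces $\partial_s(M \times \mathbb{R}) = \emptyset$ and $\partial_b(M \times \mathbb{R}) = \partial_\infty(M \times \mathbb{R})$ (both ends). Since $\dim(M \times \mathbb{R}) = n+1$, Definition \ref{SYSopen} demands $n-1$ cohomology classes: one class $\beta_1 \in H^1(M \times \mathbb{R}, \partial_b(M \times \mathbb{R}))$ and $n-2$ classes $\beta_2, \ldots, \beta_{n-1} \in H^1(M \times \mathbb{R})$. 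Let $\alpha_1, \ldots, \alpha_{n-2} \in H^1(M)$ realize the closed SYS structure, so that $\tau_M := [M] \frown (\alpha_1 \smile \cdots \smile \alpha_{n-2}) \in H_2(M)$ lies outside the image of $\pi_2(M) \to H_2(M)$.

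A straightforward computation with the direct-limit definition of relative cohomology (taking $K_i = M \times [-i,i]$ as exhaustion) gives $H^1(M \times \mathbb{R}, \partial_b(M \times \mathbb{R})) \cong \mathbb{Z}$. Equivalently, by Poincaré duality (Theorem~\ref{pd}), this group is isomorphic to $H_n(M \times \mathbb{R}, \partial_s(M \times \mathbb{R})) = H_n(M \times \mathbb{R}) = H_n(M) = \mathbb{Z}$, generated by the slice class $[M \times \{0\}]$. Define $\beta_1 := \pi_\mathbb{R}^*(\delta)$, where $\delta$ generates $H^1(\mathbb{R}, \partial_\infty \mathbb{R})$, so that $[M \times \mathbb{R}] \frown \beta_1 = i_*[M]$ with $i : M \hookrightarrow M \times \{0\} \subset M \times \mathbb{R}$. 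Set $\beta_k := \pi_M^*(\alpha_{k-1})$ for $2 \leq k \leq n-1$. Naturality of the cap product, together with $i^* \circ \pi_M^* = \mathrm{id}$, then yields
\begin{equation*}
    \tau := [M \times \mathbb{R}] \frown (\beta_1 \smile \cdots \smile \beta_{n-1}) = i_*\bigl([M] \frown (\alpha_1 \smile \cdots \smile \alpha_{n-2})\bigr) = i_*(\tau_M).
\end{equation*}

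To verify that $\tau$ is aspherical in the sense of Definition~\ref{spherical}, it suffices to produce a single closed $\Omega \subset M \times \mathbb{R}$ with $\overline{\Omega \Delta V^c}$ compact for which $\tau|_\Omega$ avoids the Hurewicz image. Choose $\Omega := M \times \mathbb{R} \setminus \{p\}$ for any point $p$; then $\overline{\Omega \Delta V^c} = \{p\}$ is compact. The long exact sequence of the pair $(M \times \mathbb{R}, \{p\})$ gives $H_2(M \times \mathbb{R}, \{p\}) \cong H_2(M \times \mathbb{R})$, and since $i : M \to M \times \mathbb{R}$ is a homotopy equivalence, this is naturally identified with $H_2(M)$. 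Under the same identifications, $\pi_2(M \times \mathbb{R}, \{p\}, p) = \pi_2(M \times \mathbb{R}) = \pi_2(M)$, and the relative Hurewicz map reduces to the absolute Hurewicz $\pi_2(M) \to H_2(M)$. Thus $\tau|_\Omega$ corresponds to $\tau_M$, which fails to be spherical by the SYS hypothesis on $M$. Hence $\tau$ is aspherical, and $M \times \mathbb{R}$ satisfies Definition~\ref{SYSopen}.

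The only genuinely delicate step is the construction of $\beta_1$ and the cap-product identity $[M \times \mathbb{R}] \frown \beta_1 = i_*[M]$, which must be verified inside the paper's formalism of inverse/direct limits rather than via forms on a compactification. Once this Poincaré-dual identification of $\beta_1$ with the slice class is established (an exhaustion-level computation combined with Theorem~\ref{pd}), the remaining dimension counting, cap-product naturality, and Hurewicz identification are routine.
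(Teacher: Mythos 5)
Your approach coincides with the paper's: take $V = \emptyset$, let $\beta_1$ be the Poincar\'e dual of the slice class $[M \times \{0\}]$ (equivalently the generator of $H^1_c(M\times\mathbb R)$), and pull back the remaining classes via $\pi_M$; you also supply the verification of asphericity, which the paper leaves implicit. One small slip: your witness $\Omega = M\times\mathbb R\setminus\{p\}$ is an \emph{open} subset, whereas Definition~\ref{spherical} requires $\Omega$ to be closed. The cleanest fix is to take $\Omega = M\times\mathbb R$ itself (so $M\setminus\Omega = \emptyset$ and $\overline{\Omega\Delta V^c}=\emptyset$), after which the relative Hurewicz map is just the absolute one $\pi_2(M\times\mathbb R)\to H_2(M\times\mathbb R)$, and since $i\colon M\hookrightarrow M\times\mathbb R$ is a homotopy equivalence carrying $\tau_M$ to $\tau$, the closed SYS hypothesis on $M$ immediately gives that $\tau$ avoids the Hurewicz image; alternatively, use the complement of a small open ball about $p$.
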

\begin{proof}
Let $[M] \frown\beta_1 \frown\ldots \frown\beta_{n-2}$ be the aspherical homology class in the definition of closed SYS manifold. Take $V=\emptyset$, where $V$ is the set required in Definition \ref{SYSopen}. Denote $\gamma\in H^1_c(M\times \mathbb R) = H^1(M \times \mathbb R, \partial_{\infty}V^c)$ to be the Poincar\'e dual of $[M]$ in $M\times \mathbb R$. Then the class
$$
[M\times \mathbb R] \frown \gamma \frown\beta_1 \frown\ldots \frown\beta_{n-2}\mbox{ in }H_2(M\times \mathbb R)
$$
is aspherical. We already show that $M\times \mathbb R$ is an open SYS manifold.
\end{proof}

\begin{proposition}[i.e. Example \ref{Expl3} Case (ii)]
		Let $M^n$ be a closed SYS manifold, $\Gamma$ be an embedding submanifold of $M$ with the first Betti number $b_1 \le n-3$, then $M^n \setminus \Gamma$ is an open SYS manifold.
	\label{thm betti}
\end{proposition}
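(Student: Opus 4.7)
The plan is to use a tubular neighborhood of $\Gamma$ to set up ends of $M':=M\setminus\Gamma$, restrict the SYS cohomology classes from $M$ to $M'$, and verify asphericity of the resulting class. Concretely, pick an open tubular neighborhood $U$ of $\Gamma$ and set $V:=\bar U\cap M'\in\mathcal B(M')$. Since $M$ is closed, $V^c=M\setminus\bar U$ is compact, so $\partial_b M'=\emptyset$ and $\partial_s M'=\partial_\infty M'$ consists entirely of the ends arising from $\Gamma$. The task is then to exhibit classes $\beta'_i\in H^1(M')$ for $i=1,\dots,n-2$ such that $\tau':=[M']\frown(\beta'_1\smile\dots\smile\beta'_{n-2})\in H_2(M',\partial_s M')$ is aspherical in the sense of Definition \ref{spherical}.

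Let $\beta_1,\dots,\beta_{n-2}\in H^1(M)$ be the classes providing the SYS structure of $M$. The hypothesis $b_1(\Gamma)\leq n-3$ implies that the $n-2$ restrictions $\beta_i|_\Gamma$ are linearly dependent in $H^1(\Gamma;\mathbb Q)$. Using the identity $\alpha\smile\alpha=0$ for degree-one rational classes, a rational change of basis lets us replace $\beta_1$ by some $\hat\beta_1:=\sum c_i\beta_i$ with $\hat\beta_1|_\Gamma=0$; the cup product $\hat\beta_1\smile\beta_2\smile\dots\smile\beta_{n-2}$ equals $c_1\,\beta_1\smile\dots\smile\beta_{n-2}$ with $c_1\neq 0$, and asphericity is preserved under such rational scaling. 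Clearing denominators yields an integer cohomology class, again called $\beta_1$, with $\beta_1|_\Gamma=0$, and hence $\beta_1|_U=0$ by homotopy invariance. Finally set $\beta'_i:=j^*\beta_i\in H^1(M')$ where $j\colon M'\hookrightarrow M$ is the inclusion.

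Since $\beta_1|_U=0$, a Poincar\'e-dual $(n-1)$-cycle for $\beta_1$ in $M$ may be chosen inside $M\setminus U$, and transverse intersection with duals of the remaining $\beta_i$ produces a $2$-cycle $Z\subset M\setminus U\subset M'$ representing $\tau\in H_2(M)$ and, via the natural map $H_2(M)\to H_2(M',\partial_s M')$, also $\tau'$. Suppose for contradiction that $\tau'$ is spherical, and pick a compact $\Omega\subset M\setminus \bar U$ with $Z\subset\Omega$ and $\overline{\Omega\Delta V^c}$ compact. Then $\tau'|_\Omega\in H_2(M',M'\setminus\Omega)$ equals $h(\sigma)$ for some $\sigma\colon(D^2,\partial D^2)\to(M',M'\setminus\Omega)$, and composition with $j$ yields $\bar\sigma\colon(D^2,\partial D^2)\to(M,M\setminus\Omega)$ whose boundary loop lies in $\bar U\setminus \Gamma$. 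The remaining step is to cap this boundary loop by a disk inside $\bar U$, producing a $2$-sphere in $M$ representing $\tau$ and contradicting the closed SYS condition on $M$.

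I expect this final cap-off step to be the main technical obstacle. It requires combining the vanishing $\beta_1|_\Gamma=0$ with the Betti bound $b_1(\Gamma)\le n-3$ to control the relative homotopy of the pair $(\bar U,\bar U\setminus \Gamma)$, most cleanly via the long exact sequence of the pair $(M,\Gamma)$ together with a Mayer--Vietoris argument for the decomposition $M=(M\setminus U)\cup\bar U$. Careful attention must also be paid to the integer-versus-rational coefficient issue in the change-of-basis step, but both obstructions appear tractable and amount to standard algebraic-topological bookkeeping.
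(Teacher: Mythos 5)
Your proposal takes a genuinely different route from the paper, and it has two gaps, one of which is a real logical error.

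\textbf{The change-of-basis step is incorrect as written.} You replace $\beta_1$ by a rational combination $\hat\beta_1=\sum c_i\beta_i$ with $\hat\beta_1|_\Gamma=0$, observe that $\hat\beta_1\smile\beta_2\smile\cdots\smile\beta_{n-2}=c_1\,\beta_1\smile\cdots\smile\beta_{n-2}$, and assert that ``asphericity is preserved under such rational scaling'' and again after ``clearing denominators.'' This is false: asphericity is the condition $\tau\notin\mathrm{Im}\,(\pi_2\to H_2)$, and since the Hurewicz image is merely a subgroup, $\tau\notin\mathrm{Im}$ does \emph{not} imply $k\tau\notin\mathrm{Im}$ for an integer $k\neq\pm1$. (For instance, $\tau$ could map to an order-$k$ element of $H_2/\mathrm{Im}$.) After clearing denominators you end up with the class $[M]\frown(\gamma\smile\beta_2\smile\cdots\smile\beta_{n-2})=(Nc_1)\tau$ and have no control over the integer $Nc_1$. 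The paper avoids this by a different algebraic trick: since $H^1(\Gamma)$ is torsion-free, it chooses the integer relation $\sum a_i\beta_i|_\Gamma=0$ with $\gcd(a_i)=1$, picks $\mu_i$ with $\sum\mu_i a_i=1$, and then \emph{decomposes} $\beta_1\smile\cdots\smile\beta_{n-2}$ as the integer combination $\sum_j\mu_j(\beta_1\smile\cdots\smile\gamma\smile\cdots\smile\beta_{n-2})$ with $\gamma=\sum a_i\beta_i$ in the $j$-th slot. Since the image of Hurewicz is a subgroup, if all summands were spherical then $\tau$ would be too; hence at least one summand with $\gamma$ (which restricts to zero on $\Gamma$) is aspherical. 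You need this decomposition argument, not the scaling one.

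\textbf{Your choice of ends-decomposition is the opposite of the paper's, and this is what creates your ``cap-off'' obstacle.} You set $V=\bar U\cap M'$ (equivalently $V=M'$ up to a compact set), so $\partial_b M'=\emptyset$, $\partial_s M'=\partial_\infty M'$, and asphericity must be tested on compact $\Omega\subset M'$, forcing you to analyze maps into $(\,\bar U\setminus\Gamma,\,\cdot\,)$ and to cap off boundary loops inside $\bar U$. The paper instead takes $V=\emptyset$, so $\partial_b M'=\partial_\infty M'$ and $\gamma$ is naturally a compactly-supported class: the exact sequence $0\to H^1_c(M\setminus\Gamma)\to H^1(M)\to H^1(\Gamma)$ lifts $\gamma$ to $H^1_c(M\setminus\Gamma)=H^1(M\setminus\Gamma,\partial_b M')$, and the asphericity verification becomes essentially the same as the closed case. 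Your choice of $V$ is the one the paper uses for Case~(i) (small-dimensional $\Gamma$), where the hypotheses $\pi_1(\Gamma)\cong H_1(\Gamma)$ and $\pi_2(\Gamma)\cong H_2(\Gamma)$ are exactly what make the diagram chase (and your missing cap-off step) go through. For Case~(ii), where the only hypothesis is the Betti bound, switching to $V=\emptyset$ eliminates the cap-off step entirely; you should make the same switch rather than trying to close the gap you flagged.
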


\begin{proof}
	Let $[M] \frown\beta_1 \frown\cdots \frown\beta_{n-2}$ be the aspherical homology class in the definition of closed SYS manifold. Since $ H^1(\Gamma)$ is torsion-free (see universal coefficient theorem in \cite{2002Hat}) and  $\rank H^1(\Gamma) = b_1 \le n-3$, we conclude that $\beta_1|_{\Gamma},\ldots,\beta_{n-2}|_{\Gamma}$ are $\mathbb{Z}$-linearly dependent in $H^1(\Gamma)$, i.e. there are  integers $a_1,\ldots,a_{n-2}$ such that
	\begin{equation*}
		a_1\cdot\beta_1|_{\Gamma}	 + a_2\cdot \beta_2|_{\Gamma} + \ldots +a_{n-2}\cdot \beta_{n-2}|_{\Gamma} = 0 \mbox{ in } H^1(\Gamma). 
	\end{equation*}
	Since $H^1(\Gamma)$ is free, we can assume $(a_1,\ldots,a_{n-2})=1$. That is to say,  there are integers $\mu_i$ such that $\Sigma \mu_i a_i =1$.
	
	We define a new cohomology class 
	\begin{equation*}
		\gamma = a_1\cdot\beta_1	 + a_2\cdot \beta_2 + \cdots +a_{n-2}\cdot \beta_{n-2} \mbox{ in }H^1(M).
	\end{equation*}
	Notice that $\gamma|_{\Gamma} = 0$.
	Since we have the decomposition
	\begin{align*}
		\beta_1\smile \cdots\smile\beta_{n-2} &= \left ( \Sigma\mu_i a_i \right) \cdot \beta_1\smile \cdots \smile \beta_{n-2} \\
		&= \mu_1 \cdot \gamma  \smile \cdots \smile \beta_{n-2} + \cdots +\mu_{n-2}\cdot \beta_1\smile \cdots \smile \gamma, 
	\end{align*}
	we have that at least one component should be aspherical. 
	
Without loss of generality, we may just assume $[M] \frown (\gamma \smile \cdots \smile \beta_{n-2})$ be an aspherical class in $H_2(M)$. { For $\Gamma$ is a closed submanifold of M, there is a long exact sequence} 
	\begin{equation*}
		\cdots \to H_c^i(M\setminus \Gamma) \xrightarrow{j_*} H_c^i(M) \xrightarrow{i^*} H_c^i(\Gamma) \to H^{i+1}_c(M\setminus \Gamma) \to \cdots ,
	\end{equation*}
	{This exact sequence follows from the exact sequence $0\rightarrow C^*_c (M\setminus N_{\epsilon}(\Gamma))\rightarrow C^*_c(M)\rightarrow C^*_c (N_\epsilon(\Gamma))\rightarrow 0$, where $N_\epsilon(\Gamma)$ is the tubular neighborhood of $\Gamma$ with radius $\epsilon$.}
	Thanks to this, we have 
	\begin{equation*}
		0 \to H^1_c(M\setminus \Gamma) \xrightarrow{j_*} H^1(M) \xrightarrow{i^*} H^1(\Gamma).
	\end{equation*}
	Therefore, $\gamma$ can be regarded as a class in $H^1_c(M \setminus \Gamma) = H^1(M \setminus \Gamma, \partial_{\infty}V^c)$ for $V=\emptyset$, where $V$ is the set required in Definition \ref{SYSopen}, and so $M \setminus \Gamma$ is an open SYS manifold.
\end{proof}

Consider  a tubular neighborhood $N(\Gamma)$ of a closed $k$-submanifold $\Gamma$ in a closed manifold $M^n$, 
where $k<n$. The exact sequence for the relative homology  (see \cite{2002Hat}) can be expressed as follows: 
 \begin{center}
 \begin{tikzcd}
H_2(N(\Gamma))\arrow[r, "i_*"] &H_2(M)\arrow[r, "j_*"]& H_2(M, N(\Gamma))\arrow[r, "\partial"]& H_1(N(\Gamma)). 
\end{tikzcd}
\end{center}  
For Borel-Moore homology, there is a long exact localization sequence
\begin{center}
	\begin{tikzcd}
		H^{BM}_i(\Gamma)\arrow[r, "i_*"] &H^{BM}_i(M)\arrow[r, "j_*"]& H^{BM}_i(M/ \Gamma)\arrow[r, "\partial"]& H^{BM}_{i-1}(\Gamma). 
	\end{tikzcd}
\end{center}
Since $\Gamma$ and $M$ are both closed, we have $H^{BM}_*(\Gamma)=H_*(\Gamma)$ and $H^{BM}_*(\Gamma)=H_*(\Gamma)$. Denote $\partial_\infty=\partial_\infty(M\setminus \Gamma)$, then we see $H^{BM}_*(M/\Gamma)=H_*(M\setminus\Gamma,\partial_\infty)$. Moreover, we have the following commutative diagram
\begin{center}
	\begin{tikzcd}
		H_i(\Gamma)\arrow[r, "i_*"] \arrow[d, "\simeq"] &H_i(M)\arrow[r, "j_*"] \arrow[d, equal]& H_i(M\setminus \Gamma, \partial_{\infty})\arrow[r, "\partial"]  \arrow[d, "\simeq"]& H_{i-1}(\Gamma)  \arrow[d, "\simeq"] \\
		H_i(N(\Gamma))\arrow[r, "i_*"] &H_i(M)\arrow[r, "j_*"]& H_i(M, N(\Gamma))\arrow[r, "\partial"]& H_{i-1}(N(\Gamma)) ,
	\end{tikzcd}
\end{center}
where the first and fourth isomorphisms hold since $\Gamma$ is the deformation retract of $N(\Gamma)$ and the third isomorphism comes from the five lemma.

\begin{lemma} Let $M$ be a closed $n$-manifold and $\Gamma$ be a closed $k$-submanifold of $M$ satisfying 
\begin{itemize}
\item $H_1(\Gamma)\cong \pi_1(\Gamma)$, 
\item and $H_2(\Gamma)\cong \pi_2(\Gamma)$.
\end{itemize}
If $\tau$ is an element in $H_2(M)$ such that $j_*(\tau)$ is spherical in $H_2(M, N(\Gamma))$, 
then $\tau$ is also spherical in $H_2(M)$.
\end{lemma}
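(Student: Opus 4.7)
The plan is to perform a diagram chase on the commutative ladder formed by the long exact sequences of the pair $(M, N(\Gamma))$ in homotopy and homology, with the vertical arrows given by the Hurewicz maps. Since $\Gamma$ is a deformation retract of $N(\Gamma)$, we may replace $N(\Gamma)$ by $\Gamma$ throughout and consider
\begin{center}
\begin{tikzcd}[column sep=small]
\pi_2(\Gamma)\arrow[r]\arrow[d,"h_\Gamma^{(2)}"] & \pi_2(M)\arrow[r]\arrow[d,"h_M^{(2)}"] & \pi_2(M,\Gamma)\arrow[r,"\partial"]\arrow[d,"h^{\text{rel}}"] & \pi_1(\Gamma)\arrow[d,"h_\Gamma^{(1)}"] \\
H_2(\Gamma)\arrow[r,"i_*"] & H_2(M)\arrow[r,"j_*"] & H_2(M,\Gamma)\arrow[r,"\partial"] & H_1(\Gamma).
\end{tikzcd}
\end{center}
The hypotheses $H_1(\Gamma)\cong\pi_1(\Gamma)$ and $H_2(\Gamma)\cong\pi_2(\Gamma)$ are to be read as the statement that $h_\Gamma^{(1)}$ and $h_\Gamma^{(2)}$ are isomorphisms.

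The first step is to use the sphericality of $j_*(\tau)$: there exists $\alpha\in\pi_2(M,\Gamma)$ with $h^{\text{rel}}(\alpha)=j_*(\tau)$. Exactness of the bottom row gives $\partial j_*(\tau)=0$, and commutativity of the rightmost square then yields $h_\Gamma^{(1)}(\partial\alpha)=0$. Since $h_\Gamma^{(1)}$ is an isomorphism, $\partial\alpha=0$ in $\pi_1(\Gamma)$, so by exactness of the top row there is $\beta\in\pi_2(M)$ lifting $\alpha$.

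Next, set $\tau':=h_M^{(2)}(\beta)\in H_2(M)$; by construction it is spherical. Commutativity of the middle square shows $j_*(\tau')=h^{\text{rel}}(\alpha)=j_*(\tau)$, hence $\tau-\tau'\in\ker j_*=\operatorname{im} i_*$. Pick $\gamma\in H_2(\Gamma)$ with $i_*(\gamma)=\tau-\tau'$, and use the isomorphism $h_\Gamma^{(2)}$ to write $\gamma=h_\Gamma^{(2)}(\delta)$ for some $\delta\in\pi_2(\Gamma)$. By naturality of the Hurewicz map,
\begin{equation*}
\tau-\tau'=i_*h_\Gamma^{(2)}(\delta)=h_M^{(2)}(i_*\delta),
\end{equation*}
so $\tau=h_M^{(2)}(\beta+i_*\delta)$ lies in the image of the Hurewicz map $\pi_2(M)\to H_2(M)$, i.e.\ $\tau$ is spherical.

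The argument is essentially a formal diagram chase, so there is no serious geometric obstacle; the only point to verify carefully is the identification of homotopy/homology groups of $N(\Gamma)$ with those of $\Gamma$ via the deformation retract (so that the two long exact sequences are indeed linked by the Hurewicz transformation) and the interpretation of the hypothesis $H_i(\Gamma)\cong\pi_i(\Gamma)$ as asserting that the corresponding Hurewicz maps are isomorphisms, which is the natural reading in this context.
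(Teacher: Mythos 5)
Your proposal is correct and follows essentially the same diagram chase as the paper: the same commutative ladder of long exact sequences for the pair $(M,N(\Gamma))$ linked by Hurewicz maps, with $h_\Gamma^{(1)}$ and $h_\Gamma^{(2)}$ assumed to be isomorphisms (the paper's $f_4$ and $f_1$), lifting the relative class to $\pi_2(M)$ and correcting by a class from $\pi_2(\Gamma)$. The only cosmetic difference is that you replace $N(\Gamma)$ by its deformation retract $\Gamma$ throughout, which you justify; the logical steps are the same.
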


\begin{proof} We have the following commutative diagram
 \begin{center}\begin{tikzcd}
H_2(N(\Gamma))\arrow[r, "i_*"]&H_2(M)\arrow[r, "j_*"]&H_2(M, N(\Gamma))\arrow[r, "\partial"] &H_1(N(\Gamma)) \\ 

\pi_2(N(\Gamma))\arrow[u, "f_1"]\arrow[r, "i'_*"]&\pi_2(M)\arrow[u, "f_2"]\arrow[r, "j'_*"]&\pi_2(M, N(\Gamma))\arrow[r, "\partial'"]\arrow[u, "f_3"]&\pi_1(N(\Gamma))\arrow[u, "f_4"]
\end{tikzcd}
\end{center}  
where each $f_i$ is the corresponding Hurewicz map. From our assumption both $f_1$ and $f_4$ are isomorphisms. 
The proof depends on the diagram-chasing argument. 

Since $j_*(\tau)$ is spherical in $H_2(M, N(\Gamma))$,  by definition we can find an element $u\in \pi_2(M, N(\Gamma))$ satisfying
$j_*(\tau)=f_3(u)$.
In particular, we obtain $$f_4\circ \partial' (u)=\partial\circ f_3(u)=\partial \circ j_* (\tau)=0.$$ Because $f_4$ is an isomorphism, we have $\partial' (u)=0$.
Due to the exact sequence in the second line, there is an element $v_1\in \pi_2(M)$ such that 
$u=j'_*(v_1)$.
Clearly we have $$j_*\circ f_2(v_1)=f_3\circ  j'_*(v_1)=f_3(u)=j_*(\tau),$$ and so $j_*(\tau-f_2(v_1))=0$.
Now we use the exact sequence in the first line to find $v_2\in H_2(N(\Gamma))$ satisfying
$\tau-f_2(v_1)=i_*(v_2)$.
Recall that the map $f_1: \pi_2(N(\Gamma))\rightarrow H_2 (N(\Gamma))$ is an isomorphism, then there is an element $v'_2\in \pi_2(N(\Gamma))$ satisfying 
$v_2=f_1(v'_2)$.
And we arrive at 
$$f_2\circ i'_*(v'_2)=i_*\circ f_1 (v'_2)=i_*(v_2)=\tau-f_2(v_1).$$ Therefore, we obtain
$\tau=f_2(v_1+i'_*(v'_2))$, which means that $\tau$ is spherical in $H_2(M)$. 
\end{proof}

\begin{corollary}\label{asph} Let $M$ and $\Gamma$ be assumed as above and $\tau\in H_2(M)$. If $\tau$ is aspherical in $H_2(M)$, then $j_*(\tau)$ is aspherical in $H_2(M, N(\Gamma))$.  
\end{corollary}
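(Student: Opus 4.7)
My plan is to observe that Corollary \ref{asph} is simply the contrapositive of the Lemma that immediately precedes it. The Lemma asserts that if $j_*(\tau)$ is spherical in $H_2(M,N(\Gamma))$, then $\tau$ is spherical in $H_2(M)$, under the hypotheses $H_1(\Gamma)\cong\pi_1(\Gamma)$ and $H_2(\Gamma)\cong\pi_2(\Gamma)$. Taking the contrapositive directly yields the claim: if $\tau$ is aspherical in $H_2(M)$ (i.e., not spherical), then $j_*(\tau)$ cannot be spherical in $H_2(M,N(\Gamma))$, hence $j_*(\tau)$ is aspherical.

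Concretely, the proof I would write is a single sentence: suppose for contradiction that $j_*(\tau)$ is spherical in $H_2(M,N(\Gamma))$; then the preceding Lemma would force $\tau$ to be spherical in $H_2(M)$, contradicting the assumption. No diagram chasing is needed here since all the actual work has already been done in the Lemma.

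The only subtlety worth flagging is a notational one: the term \emph{aspherical} in Definition \ref{spherical} and in the surrounding discussion means \emph{not in the image of the Hurewicz map}, i.e., precisely the negation of \emph{spherical}. So the contrapositive is literal rather than requiring any reinterpretation. There is no main obstacle here — the corollary is a formal consequence, and its usefulness (as suggested by the surrounding examples section) will be to feed into the verification that removing a submanifold $\Gamma$ with tame low-dimensional homotopy from a closed SYS manifold $M$ preserves the aspherical class, so that $M\setminus\Gamma$ satisfies Definition \ref{SYSopen}.
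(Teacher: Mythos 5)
Your proposal is correct, and it is exactly what the paper intends: the corollary is stated without proof immediately after the lemma precisely because it is the literal contrapositive, and your reading of \emph{aspherical} as the negation of \emph{spherical} is the right one.
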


In the following, let $M$ be a closed SYS manifold. By definition there are cohomology classes $\beta_1 \ldots, \beta_{n-2}\in H^1(M)$ such that 
\[\tau:=[M]\frown (\beta_1\smile \ldots \smile \beta_{n-2})\in H_2(M) \text{ is an aspherical class.}\]
Consider a closed $k$-submanifold $\Gamma$ satisfying
\begin{itemize}
\item $\pi_1(\Gamma)\cong H_1(\Gamma)$,
\item  and $\pi_2(\Gamma)\cong H_2(\Gamma)$. 
\end{itemize}
Notice that we have the following commutative diagram involving cup and cap operations:

  \begin{center}\begin{tikzcd} 
 H_n(M\setminus \Gamma, \partial_{\infty}) \arrow[d,"\simeq"]  &\times &(H^1(M \setminus \Gamma))^{n-2} \arrow[r,"\frown"]& H_2( M\setminus \Gamma, \partial_{\infty}) \arrow[d,"\simeq"]& \\
 
H_n(M, N(\Gamma))&\times &(H^1(M))^{n-2}\arrow[u,"i^{*}"]\arrow[r,"\frown"]& H_2(M, N(\Gamma))& \\ 

H_n(M)\arrow[u, "j_{*}"]&\times&(H^1(M))^{n-2}\arrow[u,equal]\arrow[r, "\frown"]&H_2(M)\arrow[u, "j_{*}"].&
\end{tikzcd}
\end{center}  
As a consequence of Corollary \ref{asph}, we conclude that 
\[j_{*}(\tau) = [M, N(\Gamma)] \frown (\beta_1\smile \ldots \smile \beta_{n-2})\]
is aspherical.

\begin{proposition} Let $M$ be a closed SYS $n$-manifold and $\Gamma$ be a closed embedded $k$-submanifold satisfying
\begin{itemize}
\item $\pi_1(\Gamma)\cong H_1(\Gamma)$,
\item  and $\pi_2(\Gamma)\cong H_2(\Gamma)$,
\end{itemize} where $k<n$. Then $M\setminus \Gamma$ is an open SYS manifold. 
\end{proposition}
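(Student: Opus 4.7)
The plan is to take $V := M\setminus\Gamma \in \mathcal{B}(M\setminus\Gamma)$ as the witnessing set in Definition \ref{SYSopen}. Its boundary is empty (hence compact), so this is allowed, and the induced ends-decomposition gives $\partial_s(M\setminus\Gamma) = \partial_\infty(M\setminus\Gamma)$ together with $\partial_b(M\setminus\Gamma) = \emptyset$. Consequently $H^1(M\setminus\Gamma,\partial_b(M\setminus\Gamma)) = H^1(M\setminus\Gamma)$, so each $\beta_k \in H^1(M)$ can simply be pulled back along the inclusion $\iota:M\setminus\Gamma\hookrightarrow M$ to a class $\tilde\beta_k := \iota^*\beta_k \in H^1(M\setminus\Gamma)$. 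The candidate witnessing homology class is
\[
\tau_{new} := [M\setminus\Gamma]\frown(\tilde\beta_1 \smile \cdots \smile \tilde\beta_{n-2}) \in H_2(M\setminus\Gamma,\partial_\infty(M\setminus\Gamma)).
\]

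By the commutative diagram of cap products set up immediately before the statement, the vertical isomorphism $H_2(M\setminus\Gamma,\partial_\infty)\cong H_2(M,N(\Gamma))$ identifies $\tau_{new}$ with $j_*(\tau) = [M,N(\Gamma)]\frown(\beta_1 \smile \cdots \smile \beta_{n-2})$. By Corollary \ref{asph}, $j_*(\tau)$ is aspherical in $H_2(M,N(\Gamma))$ in the classical sense, that is, not in the image of the Hurewicz map $\pi_2(M,N(\Gamma)) \to H_2(M,N(\Gamma))$. It then remains to promote this classical asphericity to asphericity in the sense of Definition \ref{spherical}.

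To this end, I would exhibit a closed subset $\Omega \subset M\setminus\Gamma$ with $\bar\Omega$ compact (which under our choice $V^c=\emptyset$ is exactly the condition $\overline{\Omega \Delta V^c}$ compact required by Definition \ref{spherical}) such that $\tau_{new}|_\Omega$ is not in the image of $\pi_2(M\setminus\Gamma,(M\setminus\Gamma)\setminus\Omega)\to H_2(M\setminus\Gamma,(M\setminus\Gamma)\setminus\Omega)$. The natural choice is $\Omega := M\setminus N(\Gamma)$ for an open tubular neighborhood $N(\Gamma)$; this $\Omega$ is compact and disjoint from $\Gamma$. Excising $\Gamma$ then yields an isomorphism $H_2(M\setminus\Gamma,(M\setminus\Gamma)\setminus\Omega)\cong H_2(M,N(\Gamma))$ under which $\tau_{new}|_\Omega$ corresponds to $j_*(\tau)$.

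The main (and largely routine) obstacle is confirming that this correspondence is compatible with the Hurewicz maps on both sides. If $\tau_{new}|_\Omega$ were represented via Hurewicz by some map of pairs $(D^2,\partial D^2)\to(M\setminus\Gamma,(M\setminus\Gamma)\setminus\Omega)$, then post-composing with the inclusion of pairs $(M\setminus\Gamma,(M\setminus\Gamma)\setminus\Omega)\hookrightarrow(M,N(\Gamma))$ would produce an element of $\pi_2(M,N(\Gamma))$ whose Hurewicz image is $j_*(\tau)$, contradicting Corollary \ref{asph}. Therefore $\tau_{new}$ is aspherical in the sense of Definition \ref{spherical}, and hence $M\setminus\Gamma$ is an open SYS manifold.
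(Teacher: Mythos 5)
Your proof is correct and takes essentially the same route as the paper: same choice of $V = M\setminus\Gamma$, same choice of $\Omega = M\setminus N(\Gamma)$, same appeal to Corollary \ref{asph} via the cup/cap commutative diagram preceding the statement. You are slightly more explicit than the paper about why asphericity transfers across the excision isomorphism $H_2(M\setminus\Gamma,(M\setminus\Gamma)\setminus\Omega)\cong H_2(M,N(\Gamma))$ (by noting that the inclusion of pairs commutes with the Hurewicz maps even though excision fails for $\pi_2$), which the paper leaves implicit; this is a small but worthwhile clarification.
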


\begin{proof} Let $N(\Gamma)$ be the same as above. We shall show that 
\begin{equation*}
	u=[M \setminus \Gamma] \frown (i^{*}(\beta_1)\smile \ldots \smile i^{*}(\beta_{n-2})) \in H_2(M\setminus \Gamma, \partial_{\infty})
\end{equation*}
is aspherical. Take $V=M \setminus \Gamma$, where $V$ is the set required in Definition \ref{SYSopen}. 

{
Choose $\Omega = M \setminus N(\Gamma)$. The commutative diagram above gives that the map $H_2(M\setminus \Gamma, \partial_{\infty}) \to H_2(M,N(\Gamma))\cong  H_2(M, M\setminus \Omega)$ maps $u$ to $j_*(\tau)$ (i.e. $u|_{\Omega} = j_*(\tau))$. 
}

Since  $j_*(\tau)$ is aspherical by the Corollary \ref{asph}, $u|_\Omega$ is aspherical as well, which yields that $M\setminus \Gamma$ is an open SYS manifold.
\end{proof}

\begin{remark}
	If $\Gamma$ is simply-connected or $\Gamma$ is a curve, then $\Gamma$ satisfies the topological conditions $\pi_1(\Gamma)\cong H_1(\Gamma)$ and $\pi_2(\Gamma)\cong H_2(\Gamma)$.
\end{remark}
\begin{corollary}[i.e. Example \ref{Expl3} Case (i)]
Let $M^n$ be a closed SYS manifold, $\Gamma$ be an embedding submanifold of $M$ with $\dim \Gamma \leq 1$, then $M^n \setminus \Gamma$ is an open SYS manifold.
\end{corollary}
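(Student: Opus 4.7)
The plan is to deduce this corollary directly from the preceding proposition, which asserts that $M \setminus \Gamma$ is open SYS whenever the closed embedded submanifold $\Gamma$ satisfies $\pi_1(\Gamma) \cong H_1(\Gamma)$ and $\pi_2(\Gamma) \cong H_2(\Gamma)$ via the Hurewicz maps. All that needs to be done is to verify these topological conditions under the hypothesis $\dim \Gamma \leq 1$, which the remark immediately preceding the corollary already signals.

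First I would dispose of the case $\dim \Gamma = 0$. Since $\Gamma$ is a closed embedded $0$-submanifold in a closed manifold $M$, it consists of finitely many points. Each such point is simply connected with vanishing higher homotopy and homology groups, so the Hurewicz conditions hold trivially on each component. Next, for the case $\dim \Gamma = 1$, a closed embedded $1$-submanifold of $M$ is a disjoint union of finitely many embedded circles. For each circle $S^1$ we have $\pi_1(S^1) \cong H_1(S^1) \cong \mathbb{Z}$ with the Hurewicz map being the identity, and $\pi_2(S^1) = H_2(S^1) = 0$; so again the conditions hold component-wise.

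With the topological hypotheses verified, I would invoke the previous proposition directly. Strictly speaking, the preceding proposition was stated for a single submanifold $\Gamma$, but its proof proceeds through the long exact localization sequence for $(M, N(\Gamma))$ and the diagram-chasing lemma involving the Hurewicz maps; both ingredients behave well under disjoint unions, since the tubular neighborhood $N(\Gamma)$ is a disjoint union of tubular neighborhoods of the components, and the Hurewicz maps split as direct sums over components. Hence the conclusion that the restricted class
\begin{equation*}
u = [M \setminus \Gamma] \frown \bigl(i^*(\beta_1) \smile \cdots \smile i^*(\beta_{n-2})\bigr) \in H_2(M \setminus \Gamma, \partial_\infty)
\end{equation*}
is aspherical (with $V = M \setminus \Gamma$ in Definition \ref{SYSopen}) carries over without change, and $M \setminus \Gamma$ is open SYS.

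There is no real obstacle; the only mild subtlety is to observe that the preceding proposition's proof is insensitive to $\Gamma$ being disconnected, because all the relevant exact sequences, Hurewicz maps, and cap-product diagrams decompose over connected components. Once this is remarked, the corollary follows immediately from the two topological verifications above.
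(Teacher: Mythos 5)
Your plan matches the paper's intent: check the Hurewicz conditions of the preceding proposition (as the remark placed just before the corollary already signals) and then invoke that proposition, which is exactly how the corollary is derived in the paper. The verification for a single point or a single circle is correct and coincides with the paper's own justification.

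Your final paragraph, however, contains a misstatement. The Hurewicz maps do not ``split as direct sums over components'': for a disjoint union $\Gamma$ of $k\ge 2$ circles, $\pi_1(\Gamma,x_0)\cong\mathbb{Z}$ (it sees only the component of $x_0$) while $H_1(\Gamma)\cong\mathbb{Z}^k$, so $\pi_1(\Gamma)\to H_1(\Gamma)$ is not surjective, and the hypothesis $\pi_1(\Gamma)\cong H_1(\Gamma)$ of the preceding proposition fails as stated once $\Gamma$ is disconnected. (The same imprecision is already present in the paper's remark, which lets ``a curve'' include several circles while asserting the isomorphism.) The corollary still holds, but for a different reason than the one you give: an inspection of the diagram chase in the paper's lemma shows it only uses injectivity of $f_4:\pi_1(N(\Gamma))\to H_1(N(\Gamma))$ and surjectivity of $f_1:\pi_2(N(\Gamma))\to H_2(N(\Gamma))$. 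When $\dim\Gamma\le 1$, the basepoint component of $N(\Gamma)$ deformation retracts to a point or a circle, so $\pi_1$ is abelian and $f_4$ is injective; and $\pi_2(N(\Gamma))=H_2(N(\Gamma))=0$, so $f_1$ is trivially surjective. Replacing your ``direct sum over components'' claim by this observation makes the argument correct.
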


\section{Uniformly positive scalar curvature}

In this section, we study the UPSC obstructions on an open manifold. Similar to Theorem \ref{main}, we prove that a weak SYS manifold does not admit a complete UPSC metric.

\begin{theorem}
	For $3 \le n \le 7$ if $M^n$ is weakly SYS, then it admits no complete metrics with uniformly positive scalar curvature. 
\end{theorem}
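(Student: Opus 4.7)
The proof parallels that of Theorem~\ref{main}, with the uniform bound $\operatorname{Sc}(g)\geq \sigma_0 > 0$ substituting for pointwise positivity. The plan is as follows: suppose for contradiction that such a complete metric $g$ exists, fix an arbitrary closed subset $\Omega$ with $\overline{\Omega \Delta V^c}$ compact, and aim to show that $\tau|_\Omega \in H_2(M, M\setminus\Omega)$ lies in the image of the Hurewicz map $\pi_2(M)\to H_2(M, M\setminus\Omega)$, contradicting the weak asphericity hypothesis.

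As in Theorem~\ref{main}, choose a proper Lipschitz function $\rho : M \to [0,\infty)$ with $\rho|_\Omega \equiv 0$, set $\Omega_s := \rho^{-1}([0,s])$, and for a large $T$ construct the stable weighted slicing on $\Omega_T$ terminating in a stratum $(\Sigma_2, \partial \Sigma_2)$ (using a $\mu$-bubble for the first slice when $\partial_b M \neq \emptyset$, after lifting to an appropriate cover). The crucial feature of the UPSC setting is that the $T^*$-stabilized scalar curvature of $\Sigma_2$ is bounded below by $\sigma_0$ uniformly, so that Proposition~\ref{Prop: quantitive topology} applies with the constant function $L\equiv \sigma_0$ and yields a universal constant $T_0 = T_0(\sigma_0)$ depending only on $\sigma_0$---crucially independent of $T$ and of $\Omega$.

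Analyzing the components of $\Sigma_2$ as in the proof of Theorem~\ref{main}: every closed component is a $2$-sphere by Corollary~\ref{CloseCase}, and for every non-closed component $\mathcal C$ and every sufficiently small regular value $s$, the intersection $\mathcal C \cap \Omega_s$ is a disjoint union of embedded disks $D^s_1,\ldots,D^s_l$ contained in $\rho^{-1}([0,T_0])$ with boundary circles $\partial D^s_i \subset \rho^{-1}(s) \subset M\setminus\Omega$. Because $T_0$ is universal, we may take $T$ arbitrarily large while confining all disks to a fixed tubular neighborhood of $\Omega$.

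The final and most delicate step is to upgrade each relative disk class $[D^s_i]|_\Omega$ to a class realized by a genuine $2$-sphere in $M$, rather than merely by an element of $\pi_2(M,M\setminus\Omega)$ as in Theorem~\ref{main}. Since $\partial D^s_i$ bounds $D^s_i$ in $M$, it is null-homotopic in $M$; the task is to produce a null-homotopy whose image lies in $M\setminus\Omega$. Using the uniform bound $T_0(\sigma_0)$ on the disk diameters, we produce such a null-homotopy inside the thin collar $\rho^{-1}([s, T_0+1])\subset M\setminus\Omega$, for instance by a further application of the width estimate (Lemma~\ref{TStableWidthEstimate}) to a suitable subband of the collar. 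Gluing the resulting cap to $D^s_i$ yields a $2$-sphere $S_i\colon S^2 \to M$ with $[S_i]|_\Omega = [D^s_i]|_\Omega$, and summing over all components realizes $\tau|_\Omega$ in the image of $\pi_2(M)$, the desired contradiction. The main obstacle is precisely this capping construction: it is the step where the \emph{uniform} positivity of the scalar curvature is essential, and it substitutes for the more delicate relative-homotopy analysis used in Theorem~\ref{main}.
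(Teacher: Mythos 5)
Your plan follows the right framework (stable weighted slicing, $T^*$-stabilized scalar curvature, uniformity of constants in $\sigma_0$), but the key mechanism you propose is not the paper's, and the step where they differ is exactly the one you flag as ``the most delicate''---and there your argument has a genuine gap. You describe $\Sigma_2$ as breaking into \emph{disks} $D^s_i$ with boundary circles near $\rho^{-1}(s)$, and then you try to promote these relative classes to genuine $2$-spheres by an ad hoc ``capping'' of each $\partial D^s_i$ via ``a further application of the width estimate to a suitable subband of the collar.'' Lemma~\ref{TStableWidthEstimate} controls distances between separating hypersurfaces; it does not produce a null-homotopy of $\partial D^s_i$ contained in $M\setminus\Omega$, nor does it give any direct handle on $\pi_2(M)$ versus $\pi_2(M,M\setminus\Omega)$. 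As written, the capping step does not go through.

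What the paper actually uses is the \emph{$T^*$-stable $2$d Bonnet--Myers diameter inequality}: under a uniform lower bound $\operatorname{Sc}\geq\sigma>0$, any complete connected component of $\Sigma_2$ has diameter at most $2\pi/\sqrt{\sigma}$. First one chooses a compact $K\subset\Omega$ with $\beta_1|_{\Omega\setminus K}=0$ (possible since $\beta_1\in H^1(M,\partial_b M)$) and slices over $K_T$ with $T>2\pi/\sqrt{\sigma}$. Then any component $\mathcal C$ of $\Sigma_2$ meeting $K$ cannot reach $\partial K_T$, so it is already a \emph{closed} surface, and by Corollary~\ref{CloseCase} it is a $2$-sphere. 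There is nothing to cap: the spheres appear intrinsically, and since the components missing $K$ are homologically trivial (via $\beta_1|_{\Omega\setminus K}=0$), one gets $\tau|_\Omega=\sum_{\mathcal C\cap K\neq\emptyset}[\mathcal C]$ as a sum of genuine sphere classes, i.e., in the image of $\pi_2(M)$. So the gap in your argument is twofold: (1) you did not invoke the diameter bound, which is precisely where the \emph{uniform} positivity is used, and (2) you replaced it with a capping construction that is neither supplied by Lemma~\ref{TStableWidthEstimate} nor by Proposition~\ref{Prop: quantitive topology}. You also omitted the choice of $K$ with $\beta_1|_{\Omega\setminus K}=0$, which is what lets one discard the far-away components.
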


\begin{proof}
	The proof is almost the same as the proof of theorem \ref{main}.
	Denote 
$$
\tau:=[M]\frown(\beta_1\smile\ldots\smile \beta_{n-2})\in H_2(M, \partial_{s} M),
$$
where $\beta_1 \in H^1(M,\partial_b M)$ and $\beta_2, \ldots, \beta_{n-2} \in H^1(M)$.

\vspace{2mm}

	Assume $M$ admits a complete metric with $Sc \ge \sigma > 0$. For any closed subset $\Omega$ of $M$ satisfying that $\Omega \Delta V^c $ is precompact, we can choose a compact region $K \subset \Omega$ such that $\beta_1|_{\Omega \setminus K} = 0$.

	Then consider the stable weighted slicing  for  $T > \frac{2\pi}{\sqrt{\sigma}}$ as above:
	\begin{equation*}
		(\Sigma_2,\partial\Sigma_2,w_2)\to (\Sigma_3,\partial\Sigma_3,w_3)\to\ldots \to(\Sigma_n,\partial\Sigma_n,w_n)=(K_T,\partial K_T,1).
	\end{equation*}
	And the restricted class
$$
\tau|_{K_T}=[K_T,\partial K_T]\frown(e^*\beta_1\smile\ldots\smile e^*\beta_{n-2})= [\Sigma_2, \partial\Sigma_2],
$$
where $e:\Omega_T\to M$ is the inclusion map.

	{By the $T^*$-stable 2d Bonnet-Myers diameter inequality in \cite[Section 2.8]{Gr2023} and the same argument in the proof of Theorem \ref{main},} each component $\mathcal C$ of $\Sigma_2$ intersecting $K$ is a sphere with $\mathrm{diam}(\mathcal C) \le \frac{2\pi}{\sqrt{\sigma}}$. As $\beta_1|_{\Omega \setminus K} = 0$, the components lying outside of $K$ are trivial in homology. Consequently, 
	$$\tau|_\Omega = \sum_{\mathcal C\cap K \neq \emptyset} [\mathcal C] $$
	is in the image of $\pi_2(M)$ which implies that $\tau$ is spherical. The proof is completed.
\end{proof}

\begin{corollary}
	Let $M$ be a closed SYS manifold with the aspherical class $\tau= [M] \frown (\beta_1 \smile \ldots \smile  \beta_{n-2}) \in H_2(M)$ and $\Gamma \subset M$ be a embedded submanifold. If $\tau$ is not in the subgroup generated by the image of $H_2(\Gamma)$ and $\pi_2(M)$, then $M$ is weakly SYS.
\end{corollary}

\begin{proof}
	We shall show that 
\begin{equation*}
	u=[M \setminus \Gamma] \frown (i^{*}(\beta_1)\smile \ldots \smile i^{*}(\beta_{n-2})) \in H_2(M\setminus \Gamma, \partial_{\infty})
\end{equation*}
is weakly aspherical by contradiction. Where $i: M \setminus \Gamma \to M$ is the natural injection.

Let $N(\Gamma)$ be a tubular neighborhood of $\Gamma$ and $K = M \setminus N(\Gamma)$. If $u$ is strongly spherical, by Definition \ref{Defwclass}, $u|_K = \tau |_K$ is in the image of the Hurewicz map
\begin{equation*}
	\pi_2(M) \to H_2(M) \to H_2(M, M\setminus K).
\end{equation*}
Hence there exists a spherical class $s \in H_2(M)$ such that $\tau|_K - s|_K = 0$ in $H_2(M,M\setminus K)$.

Combining with the following commutative diagram 
\begin{center}
	\begin{tikzcd}
		H_2(\Gamma)\arrow[r, "i_*"] \arrow[d, "\simeq"] &H_2(M)\arrow[r] \arrow[d, equal]& H_2(M\setminus \Gamma, \partial_{\infty})\arrow[r, "\partial"]  \arrow[d, "\simeq"]& H_1(\Gamma)  \arrow[d, "\simeq"] \\
		H_2(N(\Gamma))\arrow[r, "i_*"] &H_2(M)\arrow[r]& H_2(M, M \setminus K)\arrow[r, "\partial"]& H_1(N(\Gamma)) ,
	\end{tikzcd}
\end{center}
$\tau - s $ is in the image of $H_2(\Gamma)$, so $\tau$ lies in the group generated by $H_2(\Gamma)$ and $\pi_2(M)$. Thus we have arrived at a contradiction.

\end{proof}

\begin{remark}
	Moreover, if we assume that $\Gamma$ is incompressible, i.e., $\pi_1(\Gamma) \to \pi_1(M)$ is injective, then using the same diagram chasing, we can prove that $M \setminus \Gamma$ is an open SYS manifold.
\end{remark}

\appendix
\section{Poincaré Duality Theorem on open manifolds}

Let $M$ be an open manifold with an ends-decomposition $\partial_{\infty} M= \partial_{\infty} A \sqcup \partial_{\infty} B$ with $A,B \in \mathcal B(M)$. Let $M = \bigcup K_i$, $K_i \subset \subset K_{i+1}$, be an exhaustion of $M$.

With loss of generality, we assume that $\partial A = \partial B \subset \mathrm{Int}(K_1)$. Let $A_i = \partial K_i \cap A$ and $B_i = \partial K_i \cap B$, we get a decomposition of $\partial K_i = A_i \sqcup B_i$. 

To deduce the generalization of Poincaré duality to open manifolds, let us recall the version of compact manifolds with boundary.

\begin{theorem}{\rm (Cf. \cite[Theorem 3.43]{2002Hat})}
	Suppose $M$ is a compact orientable $n$-manifold whose boundary $\partial M$ is decomposed as the union of two compact $(n-1)$-dimensional manifolds $A$ and $B$ with a common boundary $\partial A = \partial B = A \cap B$. Then the cap product with a fundamental class $[M] \in H_n(M, \partial M)$ gives isomorphisms for all $k$
	\begin{equation*}
		D_M : H^k(M, A) \to H_{n-k}(M, B), \quad \alpha \mapsto [M] \frown \alpha.
	\end{equation*} 
	
\end{theorem}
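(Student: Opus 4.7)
The plan is to derive this compact Poincaré--Lefschetz duality statement along the classical lines of Hatcher, namely a Mayer--Vietoris induction over an open cover combined with a local model computation, and then globalize using the compactness of $M$. Because this compact version is the springboard for the non-compact Theorem \ref{pd}, I would also phrase the argument so that the duality isomorphism is natural under restrictions, which is exactly the input required for taking limits over the exhaustion $M=\bigcup K_i$.

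First, I would verify the duality isomorphism on the local model $U \subset \mathbb{R}^n_+$, where $U$ is an open half-ball and $A\cap \bar U$, $B\cap \bar U$ are complementary half-disks in $U\cap \partial \mathbb{R}^n_+$. Both $H^k(U, A\cap \bar U)$ and $H_{n-k}(U, B\cap \bar U)$ can be computed directly from the long exact sequences of the relevant pairs together with the contractibility of $U$, and cap product with the local fundamental class can be checked to realize the isomorphism by hand. Next, I would show that the collection of open submanifolds for which the duality holds is closed under unions: if $U$, $V$, and $U\cap V$ all admit the duality (with the boundary decompositions induced from $A$ and $B$), then so does $U\cup V$. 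This follows from applying the five lemma to the commutative ladder of Mayer--Vietoris sequences, with cohomology of pairs $(-, A\cap -)$ on top and homology of pairs $(-, B\cap -)$ on the bottom, vertical maps being cap product with the fundamental class. A standard compactness/finite-cover argument then lifts the local duality to global duality on all of $M$. An alternative cleaner route, which I would use as a cross-check, is the doubling trick: form $DM = M\cup_A M$, apply ordinary Poincaré--Lefschetz duality to $DM$ (whose boundary is the double of $B$), and extract the desired isomorphism via excision and the $\mathbb{Z}/2$-symmetry.

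The main obstacle is bookkeeping rather than conceptual novelty. Concretely, the Mayer--Vietoris step requires that the connecting homomorphisms in cohomology (of $(-, A\cap -)$) and homology (of $(-, B\cap -)$) commute, up to the correct sign, with cap product against the fundamental class, and that restrictions of $[M]\in H_n(M,\partial M)$ to open pieces agree with the local fundamental classes under the natural transfer maps. Fixing compatible collar neighborhoods of $\partial A = \partial B = A\cap B$ at the outset makes these compatibilities explicit. Carrying this naturality through is precisely what allows the non-compact version to be deduced: applying the compact theorem to each $(K_i, A_i \sqcup B_i)$ and passing to limits (direct for cohomology, inverse for homology, combined with Milnor's $\varprojlim^1$ exact sequence from Lemma \ref{MilnorSeq}) yields the isomorphism $D_M$ appearing in Theorem \ref{pd}.
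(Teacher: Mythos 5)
The paper does not prove this statement: it is quoted verbatim as Theorem 3.43 of Hatcher's \emph{Algebraic Topology} and used as a black box, together with the Milnor $\varprojlim^1$ sequences of Lemmas \ref{MilnorSeq} and \ref{MilnolSeqCo}, to derive the non-compact duality Theorem \ref{pd}. Your sketch is a correct reconstruction of a standard textbook proof, but the route is slightly different from Hatcher's. Hatcher first proves the extreme cases $A=\emptyset$ and $B=\emptyset$ (ordinary Lefschetz duality) by attaching an open collar to $\partial M$, treating the result as a noncompact oriented manifold without boundary, and invoking Poincar\'e duality with compact supports (his Theorem 3.35, itself proved by the Mayer--Vietoris/direct-limit induction you describe); the general $(A,B)$ decomposition is then extracted by applying the five lemma to the commuting ladder built from the long exact sequences of the triples $(M,\partial M,A)$ in cohomology and $(M,\partial M,B)$ in homology, with cap product against $[M]$ and $[\partial M]=\partial_*[M]$ as the vertical maps. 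Your approach of running the Mayer--Vietoris induction directly on the $(A,B)$-decomposed manifold, or alternatively using the double $DM=M\cup_A M$, arrives at the same conclusion and has the merit you point out: it keeps naturality under restriction explicit, which is exactly what is needed when applying the theorem to each $(K_i, A_i\sqcup B_i)$ and passing to limits in Theorem \ref{pd}. The only place where care is genuinely required in your version is the sign-bookkeeping for the boundary formula $\partial([M]\frown\alpha)=\pm[M]\frown\delta\alpha\pm\partial[M]\frown\alpha$ across the Mayer--Vietoris ladder and the compatibility of restricted fundamental classes with local orientation classes, which you correctly flag; these are manageable but are precisely the reason the Hatcher route (base case $A=\emptyset$ plus five lemma on triples) is often preferred in print.
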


We can apply the duality theorem to $(K_i, A_i \sqcup B_i)$ and take a limit in some sense to obtain

\begin{theorem}\label{pd}
	Suppose $M$ is an open orientable manifold with an ends-decomposition $\partial_{\infty} M = \partial_{\infty} A \sqcup \partial_{\infty} B$. Then the cap product with a fundamental class $[M] \in H_n(M, \partial_{\infty} M)$ gives isomorphisms for all $k$
	\begin{equation*}
		D_M : H^k(M, \partial_{\infty} A) \to H_{n-k}(M,\partial_{\infty} B), \quad \alpha \mapsto [M] \frown \alpha.
	\end{equation*} 
	
\end{theorem}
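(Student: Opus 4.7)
The plan is to reduce the open-manifold duality to the classical compact Poincaré-Lefschetz duality (stated immediately above) applied to each piece $K_i$ of the exhaustion, and then pass to the limits dictated by the definitions of $H^*(M,\partial_\infty A)$ and $H_*(M,\partial_\infty B)$. The setup identifies each $K_i$ as a compact oriented $n$-manifold with boundary decomposing as $\partial K_i=A_i\sqcup B_i$, so the classical duality yields compatible isomorphisms
$$D_{K_i}:H^k(K_i,A_i)\xrightarrow{\sim} H_{n-k}(K_i,B_i),\quad \alpha\mapsto [K_i]\frown \alpha,$$
for each $i$, with $[K_i]\in H_n(K_i,\partial K_i)$.

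Next, I would identify both sides of the target duality with the corresponding limit over $i$. On the cohomology side, excision applied to $(M,A\setminus K_i)$ (removing the portion of $A\setminus K_i$ bounded away from a collar of $A_i$) followed by a deformation retraction yields a natural isomorphism $H^k(M,A\setminus K_i)\cong H^k(K_i,A_i)$; passing to the direct limit gives
$$H^k(M,\partial_\infty A)\cong \varinjlim_i H^k(K_i,A_i).$$
On the homology side, an analogous excision produces $H_{n-k}(M,B\setminus K_i)\cong H_{n-k}(K_i,B_i)$, and the Milnor exact sequence (Lemma \ref{MilnorSeq}) delivers
$$0\to \varprojlim{}^1 H_{n-k+1}(K_i,B_i)\to H_{n-k}(M,\partial_\infty B)\to \varprojlim_i H_{n-k}(K_i,B_i)\to 0.$$

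To assemble the pieces, I would observe that the fundamental class $[M]\in H_n(M,\partial_\infty M)$ arises from the compatible family $\{[K_i]\}$ via the Milnor sequence in top degree, and that the finite-level dualities $D_{K_i}$ intertwine the structure maps of the two systems (on cohomology, extensions across the collar produced by excision; on homology, retraction-induced maps from the larger $K_j$ onto the smaller $K_i$). Consequently, cap product with $[M]$ induces a homomorphism $D_M$ that, modulo the $\varprojlim{}^1$ term, is identified with $\varinjlim_i D_{K_i}$ under the above isomorphisms.

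The main obstacle is the $\varprojlim{}^1$ correction, which a priori could obstruct the middle arrow in the Milnor sequence from being an isomorphism, and equally importantly, the verification that the inverse system on homology is genuinely matched, via the individual $D_{K_i}$, with the direct system on cohomology whose colimit computes $H^k(M,\partial_\infty A)$. This is the delicate step: one must check the Mittag-Leffler condition on $\{H_{n-k+1}(K_i,B_i)\}$ and carefully track the direction of the transition maps on each side. Once Mittag-Leffler is established (so $\varprojlim{}^1=0$), the two limits are tied together by the $D_{K_i}$'s, and cap product with $[M]$ becomes the sought-after isomorphism $D_M$.
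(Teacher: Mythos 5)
There is a genuine gap in your proposal, on two counts. First, the excision identification $H^k(M,A\setminus K_i)\cong H^k(K_i,A_i)$ is incorrect: excising $A\setminus K_j$ for $j>i$ and retracting lands you in $H^k(B\cup K_i, A_i)$, not $H^k(K_i, A_i)$, and $B\cup K_i$ is a \emph{noncompact} manifold with boundary $A_i$ as soon as $\partial_\infty B\neq\emptyset$. The classical compact Poincar\'e--Lefschetz duality therefore does not apply at each level; you need a version of duality for open manifolds with boundary involving compactly supported cohomology (in the paper, Massey's Theorem 11.3, applied to $M_i=A\cup K_i$). Second, and more fundamentally, your setup produces a \emph{direct} limit on the cohomology side but an \emph{inverse} limit on the homology side, and the level-by-level dualities $D_{K_i}$ cannot intertwine these two systems because the transition maps go in opposite directions; even if $\varprojlim^1$ vanished, $\varinjlim_i H_{n-k}(K_i,B_i)$ is not $\varprojlim_i H_{n-k}(K_i,B_i)$ (the former computes $H_{n-k}(M,B)$, the latter appears in $H_{n-k}(M,\partial_\infty B)$, and these are genuinely different groups). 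Appealing to Mittag--Leffler to kill $\varprojlim^1$ is also not justified here and is not needed.

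The paper resolves both issues differently. It uses the manifolds with boundary $M_i=A\cup K_i$ (not the compact $K_i$), identifies $H^k(M_i,\partial_\infty A)$ with compactly supported cohomology $H_c^k(M_i)$, and applies Massey's duality to get $D_{M_i}\colon H_c^{n-k}(M_i)\xrightarrow{\ \sim\ }H_k(M_i,\partial M_i)$. Crucially, both sides of the target duality are then presented as Milnor short exact sequences over the \emph{same} inverse system indexed by $i$ (Lemma~\ref{MilnolSeqCo} for cohomology and Lemma~\ref{MilnorSeq} for homology), so that the $\varprojlim{}^1$ terms appear symmetrically on both sides and are never required to vanish; the conclusion follows by the five lemma applied to the resulting map of short exact sequences. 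You should replace your ``kill $\varprojlim^1$ via Mittag--Leffler'' step with this matched-Milnor-sequence argument, and work throughout with the manifolds-with-boundary $M_i$ and noncompact Lefschetz duality rather than with the compact $K_i$.
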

\vspace{2mm}
In order to prove Theorem \ref{pd}, let us recall that the homology groups $H_k(M, \partial_{\infty} B) $ are the homology groups of the chain complex
\begin{equation*}
	C_*(M, \partial_{\infty} B) = \varprojlim_i C_*(M, B \setminus K_i)
\end{equation*} 

Unfortunately, $H_k(M, \partial_{\infty} B)$ is not equal to $\varprojlim H_k(M, B \setminus K_i)$ because  the inverse limit is not an exact functor. However, we have the Milnor exact sequence

\begin{lemma}\label{MilnorSeq}
	The sequence $0 \to \varprojlim^1 H_{k+1}(M,B \setminus K_i) \to H_k(M, \partial_{\infty} B) \to \varprojlim H_k(M,B \setminus K_i) \to 0$ is exact.
\end{lemma}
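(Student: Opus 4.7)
The plan is to realize $H_k(M,\partial_\infty B)$ as the homology of an inverse limit of chain complexes and then extract the Milnor sequence from a standard short exact sequence of chain complexes built from this inverse system. Set $C^i_* := C_*(M, B\setminus K_i)$, so by definition $C_*(M,\partial_\infty B) = \varprojlim_i C^i_*$.

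The first step is to verify that the transition maps of the inverse system $\{C^i_*\}$ are surjective in each degree. Since $K_i \subseteq K_{i+1}$ we have $B\setminus K_{i+1}\subseteq B\setminus K_i$, so $C_*(B\setminus K_{i+1})$ sits as a subcomplex of $C_*(B\setminus K_i)$ inside $C_*(M)$, and the induced map on quotients $s_i:C^{i+1}_*\twoheadrightarrow C^i_*$ is therefore surjective. This is the essential hypothesis that lets the inverse-limit machinery produce no higher derived contributions beyond $\varprojlim^1$.

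Next I would consider the short exact sequence of chain complexes
\begin{equation*}
0 \longrightarrow \varprojlim_i C^i_* \longrightarrow \prod_i C^i_* \xrightarrow{\;\mathrm{id}-S\;} \prod_i C^i_* \longrightarrow 0,
\end{equation*}
where $S(c_1,c_2,\ldots) = (s_1(c_2), s_2(c_3), \ldots)$. Exactness on the left is the universal property of $\varprojlim$, while surjectivity of $\mathrm{id}-S$ is exactly where the surjectivity of the $s_i$ is used: given $(b_i)_i$, one inductively solves $c_i - s_i(c_{i+1}) = b_i$ by lifting through the $s_i$. Taking the associated long exact sequence in homology, and using that products are exact in the category of abelian groups so that $H_k(\prod_i C^i_*) = \prod_i H_k(C^i_*)$, I obtain the five-term fragment
\begin{equation*}
\prod_i H_{k+1}(C^i_*) \xrightarrow{\mathrm{id}-S} \prod_i H_{k+1}(C^i_*) \longrightarrow H_k(\varprojlim_i C^i_*) \longrightarrow \prod_i H_k(C^i_*) \xrightarrow{\mathrm{id}-S} \prod_i H_k(C^i_*).
\end{equation*}

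To finish, I would invoke the definitions $\varprojlim_i H_k(C^i_*) = \ker(\mathrm{id}-S)$ and $\varprojlim^1_i H_{k+1}(C^i_*) = \mathrm{coker}(\mathrm{id}-S)$, and splice these identifications into the five-term sequence. This produces exactly the asserted short exact sequence
\begin{equation*}
0 \to \varprojlim\nolimits^1 H_{k+1}(M,B\setminus K_i) \to H_k(M,\partial_\infty B) \to \varprojlim H_k(M,B\setminus K_i) \to 0.
\end{equation*}
The only non-formal input is the surjectivity of the transition maps at the chain level; every subsequent step is standard homological algebra and makes no use of geometry. In particular, the argument is independent of the choice of exhaustion, which a posteriori justifies the construction of $H_*(M,\partial_\infty B)$.
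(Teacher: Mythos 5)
Your proof is correct and follows essentially the same route as the paper: both hinge on the observation that the chain-level transition maps $C_k(M,B\setminus K_{i+1}) \to C_k(M,B\setminus K_i)$ are surjective, so the Mittag--Leffler condition holds. The only difference is that the paper invokes Weibel's Theorem 3.5.8 directly, whereas you unfold its proof---the telescope short exact sequence $0\to\varprojlim C^i_*\to\prod C^i_*\xrightarrow{\mathrm{id}-S}\prod C^i_*\to 0$ and the resulting long exact sequence in homology---in full.
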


\begin{proof}
	Since $C_k(M,B \setminus K_{i+1}) \to C_k(M,B \setminus K_i)$ is surjective, it satisfies Mittag-Leffler condition. By the Theorem 3.5.8 in \cite{weibel_1994}, the exact sequence holds. 
\end{proof}

Similarly, for cohomology groups, we have

\begin{lemma}\label{MilnolSeqCo}
	The sequence $0 \to \varprojlim^1 H^{k-1}(A \cup K_i , \partial_{\infty} A) \to H^k(M, \partial_{\infty} A) \to \varprojlim H^k(A \cup K_i, \partial_{\infty} A) \to 0$ is exact.
\end{lemma}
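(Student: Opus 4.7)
The plan is to derive this short exact sequence as the standard Milnor sequence attached to an inverse system of chain complexes whose structure maps are \emph{level-wise surjective}, and then to extract the statement about homology via the long exact sequence in homology.

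First, I would verify surjectivity of the structure maps of the inverse system $\{C_k(M, B\setminus K_i)\}_i$. Because the exhaustion satisfies $K_i \subset K_{i+1}$, we have $B \setminus K_{i+1} \subset B \setminus K_i$, so $C_k(B \setminus K_{i+1}) \subset C_k(B \setminus K_i)$ as subcomplexes of $C_k(M)$. The induced quotient map
\[
C_k(M, B \setminus K_{i+1}) = C_k(M)/C_k(B\setminus K_{i+1}) \twoheadrightarrow C_k(M)/C_k(B\setminus K_i) = C_k(M, B\setminus K_i)
\]
is then obviously surjective. This is the key hypothesis that makes the inverse limit well-behaved; it is considerably stronger than the Mittag-Leffler condition.

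Second, using the surjectivity from the previous step, I would set up the Milnor short exact sequence of chain complexes
\[
0 \to \varprojlim_i C_*(M, B \setminus K_i) \to \prod_i C_*(M, B \setminus K_i) \xrightarrow{\,1-\mathrm{shift}\,} \prod_i C_*(M, B \setminus K_i) \to 0,
\]
where the map $1-\mathrm{shift}$ sends $(x_i)$ to $(x_i - \varphi_i(x_{i+1}))$, with $\varphi_i$ the structure map. Surjectivity of $1-\mathrm{shift}$ at the chain level is precisely what requires surjectivity of the $\varphi_i$; one produces a preimage of a given $(y_i)$ by the standard telescope formula $x_i = y_i + \varphi_i(y_{i+1}) + \varphi_i\varphi_{i+1}(y_{i+2}) + \cdots$, which makes sense because only finitely many telescoping lifts are needed to specify each $x_i$ once the $\varphi_j$ are surjective. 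The leftmost term is $C_*(M,\partial_\infty B)$ by definition.

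Finally, I would pass to homology. Because homology commutes with arbitrary direct products, $H_k\!\bigl(\prod_i C_*(M, B \setminus K_i)\bigr) = \prod_i H_k(M, B\setminus K_i)$, and the long exact sequence in homology associated to the short exact sequence of chain complexes becomes
\[
\cdots \to \prod_i H_{k+1} \xrightarrow{1-\mathrm{shift}} \prod_i H_{k+1} \to H_k(M, \partial_\infty B) \to \prod_i H_k \xrightarrow{1-\mathrm{shift}} \prod_i H_k \to \cdots
\]
By the definitions of $\varprojlim$ and $\varprojlim^1$ as the kernel and cokernel of $1-\mathrm{shift}$ acting on $\prod_i H_k(M, B\setminus K_i)$, splicing this long exact sequence at the middle term yields
\[
0 \to \varprojlim{}^1 H_{k+1}(M, B\setminus K_i) \to H_k(M, \partial_\infty B) \to \varprojlim H_k(M, B\setminus K_i) \to 0,
\]
as desired. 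The only nontrivial input is the surjectivity of the transition chain maps in the first step; once that is in hand the argument is a direct invocation of the standard derived-functor formalism for inverse limits, as in Weibel's textbook.
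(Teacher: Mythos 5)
Your argument proves Lemma~\ref{MilnorSeq}, the \emph{homology} Milnor sequence, not the cohomology statement asked for here. Everything you do — surjectivity of the transition maps in the inverse system of relative chain groups $C_*(M, B\setminus K_i)$, the short exact sequence of chain complexes via $1-\mathrm{shift}$, and splicing the resulting long exact sequence in homology — lands you exactly at
\[
0 \to \varprojlim\nolimits^1 H_{k+1}(M, B\setminus K_i) \to H_k(M, \partial_\infty B) \to \varprojlim H_k(M, B\setminus K_i) \to 0,
\]
which is Lemma~\ref{MilnorSeq}. Lemma~\ref{MilnolSeqCo} is a different statement: it concerns the cohomology groups $H^k(A\cup K_i, \partial_\infty A)$ of the increasing exhaustion $A\cup K_1 \subset A\cup K_2 \subset \cdots$ of $M$.

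The mismatch is structural, not just notational. In the homology case, $H_k(M,\partial_\infty B)$ is by definition the homology of the \emph{inverse limit} chain complex $\varprojlim_i C_*(M, B\setminus K_i)$, so the $\varprojlim$-$\varprojlim^1$ formalism for towers of chain complexes applies directly, as you show. In the cohomology case, $H^k(M,\partial_\infty A)=\varinjlim_j H^k(M, A\setminus K_j)$ is a \emph{direct} limit, which is exact; there is no $\varprojlim^1$ arising from the definition. The inverse system in Lemma~\ref{MilnolSeqCo} instead comes from restriction along the inclusions $A\cup K_i \hookrightarrow A\cup K_{i+1}$, and the required input is the Milnor $\lim$--$\lim^1$ sequence for singular cohomology of an increasing union of spaces (Massey, Lemma~11.9, or equivalently the mapping-telescope argument), namely
\[
0 \to \varprojlim\nolimits^{1} H^{k-1}(A\cup K_i) \to H^k(M) \to \varprojlim H^k(A\cup K_i) \to 0.
\]
The paper's own proof starts from this, passes to the relative version using the long exact sequence of pairs with second entry $A\setminus K_j$, and then takes the (exact) direct limit over $j$ to replace $A\setminus K_j$ by $\partial_\infty A$. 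Your argument does not supply this ingredient and so does not prove the stated lemma.
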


\begin{proof}
	By the Lemma 11.9 in \cite{massey1978homology}, we have the Milnor exact sequence 
	\begin{equation*}
		0 \to \varprojlim \nolimits^{1}  H^{k-1}(A \cup K_i) \to H^k(M) \to \varprojlim H^{k}(A \cup K_i) \to 0. 
	\end{equation*}
	
	Combining with the long exact sequence of relative cohomology groups, the above exact sequence holds for the relative version
	$$
		0 \to \varprojlim \nolimits^{1}  H^{k-1}(A \cup K_i, A \setminus K_i) \to H^k(M, A\setminus K_i) \to \varprojlim H^{k}(A \cup K_i, A \setminus K_i) \to 0. 
	$$
	Since the direct limit is an exact functor, the cohomology group $H^k(M, \partial_{\infty} A) = \varinjlim_i H^k(M, A \setminus K_i)$. Taking direct limit, the lemma holds.

\end{proof}

 Now we can prove the Theorem \ref{pd}
 
 \begin{proof}[Proof of Theorem \ref{pd}]
 	Let $M_i$ be the manifold with boundary $A \cup K_i $ and $H_c^k(M_i) = H^k(M_i, \partial_{\infty} A) $. 
 	Consider the following commutative diagram
%

	\begin{equation*}
		\xymatrix{
		0 \ar[r] &\varprojlim^1 H_c^{n-k-1}(M_i) \ar[r] \ar[d]_{[M_i]\frown -}&H^{n-k}(M, \partial_{\infty} A)  \ar[r]\ar[d]_{[M]\frown -}&\varprojlim H_c^{n-k}(M_i)  \ar[r]\ar[d]_{[M_i]\frown -}& 0 \\
		0 \ar[r] &\varprojlim^1 H_{k+1}(M_i,\partial M_i) \ar[r]&H_k(M, \partial_{\infty}B)   \ar[r]&\varprojlim H_k(M_i,\partial M_i)  \ar[r]& 0 
	}
	\end{equation*}
	
	The first row is exact by Lemma \ref{MilnolSeqCo}, and the second row is exact by Lemma \ref{MilnorSeq} and the excision theorem. By the Theorem 11.3 in \cite{massey1978homology}, the Poincaré duality Theorem holds for manifolds with boundary and cohomology with compact support, i.e. the map
	\begin{equation*}
		D_{M_i}= [M_i] \frown - : H_c^{n-k}(M_i) \to H_k(M_i, \partial M_i)
	\end{equation*}
	is isomorphism. Hence the induced maps in the first and third columns are isomorphism. By the five Lemma, the second column is also isomorphism.
\end{proof}

\nocite{*}

\bibliography{Positive}
\end{document}